\newtheoremstyle{Satzk}
{10pt}
{10pt}
{\itshape}
{}
{\bfseries}
{.}
{ }
{\thmname{#1} \thmnumber{#2}}
\newtheoremstyle{Satzl}
{10pt}
{10pt}
{\itshape}
{}
{\bfseries}
{.}
{ }
{\thmname{#1} \thmnumber{#2}: \thmnote{#3}}
\newtheoremstyle{Definitionk}
{10pt}
{10pt}
{}
{}
{\bfseries}
{.}
{ }
{\thmname{#1} \thmnumber{#2}}
\newtheoremstyle{Definitionl}
{10pt}
{10pt}
{}
{}
{\bfseries}
{.}
{ }
{\thmname{#1} \thmnumber{#2}: \thmnote{#3}}
\newtheoremstyle{Bemerkung}
{10pt}
{10pt}
{}
{}
{\bfseries}
{.}
{ }
{\thmname{#1}}
\theoremstyle{Definitionl}
\newtheorem{defi}{Definition}[section]
\theoremstyle{Definitionk}
\newtheorem{defk}[defi]{Definition}
\theoremstyle{definition}
\theoremstyle{Satzk}
\newtheorem{satzk}[defi]{Theorem}
\newtheorem{lemmak}[defi]{Lemma}
\theoremstyle{Satzl}
\newtheorem{satzl}[defi]{Theorem}
\newtheorem{lemmal}[defi]{Lemma}
\newtheorem{korl}[defi]{Corollary}
\theoremstyle{Bemerkung}
\newtheorem*{bem*}{Remark}
\newtheorem{bemk}[defi]{Remark}
\RenewDocumentCommand{\title}{om}{%
   \IfNoValueTF{#1}
     {\gdef\shorttitle{Local Well-Posedness for the DNLS in Besov Spaces}}
     {\gdef\shorttitle{#1}}%
   \gdef\@title{#2}%
}
\title{Local Well-Posedness for the Derivative Nonlinear Schrödinger Equation in Besov spaces}
\author{Cai Constantin Cloos}
\address{Universität Bielefeld, Fakultät für Mathematik, Postfach 100131, 33501 Bielefeld, Germany}
\email{ccloos@math.uni-bielefeld.de}
\subjclass[2010]{33Q55}
\keywords{local well-posedness, derivative nonlinear Schrödinger equation, Besov space, multilinear estimates}
\begin{document}
\setcounter{page}{1}
\begin{abstract}
It is shown that the cubic derivative nonlinear Schrö\-din\-ger equation is locally well-posed in Besov spaces $B^{s}_{2,\infty}(\mathbb X)$, $s\ge\tfrac12$, where we treat the non-periodic setting $\mathbb X=\mathbb R$ and the periodic setting $\mathbb X=\mathbb T$ simultaneously. The proof is based on the strategy of Herr for initial data in $H^{s}(\mathbb T)$, $s\ge\tfrac12$. 
\end{abstract}
\maketitle
\section{Introduction and main result}
We study the Cauchy problem for the following derivative nonlinear Schrödinger equation:
\begin{align}
\begin{cases}
i\partial_tu+\partial_x^2u= i\partial_x(|u|^2u)+\lambda |u|^{2k}u\ &\,\text{in }\mathbb X \times(-T,T),\\
u(0)= u_0\ &\,\text{in }\mathbb X,
\end{cases}\label{G2}\end{align}
where $\lambda\in\mathbb R$, $k\in\mathbb N_0$, $T>0$, $\mathbb X=\mathbb R$ (\emph{non-periodic setting}) or $\mathbb X=\mathbb T:=\mathbb R/2\pi\mathbb Z$ (\emph{periodic setting}).
We look for solutions $u$ which satisfy the corresponding integral equation
\begin{align*}
u(t)= U_tu_0+\int_0^tU_{t-t'}\big[\partial_x(|u|^2u)(t')-i\lambda|u|^{2k}u(t')\big]\,\mathrm dt',\ t\in(-T,T),
\end{align*}
where $(U_tu_0)\widehat\ (\xi)= e^{-it\xi^2}\widehat{u_0}(\xi)
$ for $u_0\in\mathcal S(\mathbb X)$.\\[10pt]
In the non-periodic setting, Takaoka \cite{Ta2} showed local well-posedness for initial data $u_0\in H^s(\mathbb R)$ and $s\ge\tfrac12$ which improved the results of Hayashi and Ozawa \cite{HaOz2,Hay,HaOz} for initial data in $H^1(\mathbb R)$. The central tools have been Fourier restriction methods, local smoothing, Strichartz estimates and a gauge transformation which cancels out the unfavorable nonlinear term $2i|u|^2\partial_xu$.\\[10pt]
In the periodic setting, Herr \cite{Herr} showed local well-posedness in $H^s(\mathbb T)$, $s\ge\tfrac12$, by using an adapted gauge transformation and a suitable version of Bourgain's $L^4$-Strichartz estimate.\\[10pt]
For $s<\tfrac12$, Biagoni and Linares \cite{BiLi} showed that the flow map $u_0\mapsto u$ is no longer uniformly continuous. In this sense, $H^{1/2}$ is critical. However, with respect to scaling, $L^2$ is critical: If $u$ solves \eqref{G2} with initial datum $u_0$, then $u_\sigma(x,t):=\tfrac{1}{\sigma^{1/2}}u\big(\tfrac{x}{\sigma},\tfrac{t}{\sigma^2}\big)$, $\sigma>0$, is a solution for initial data $u_0\big(\tfrac{\cdot}{\sigma}\big)$ and we have $\|u_\sigma(t)\|_{L^2}=\big\|u\big(\tfrac{t}{\sigma^2}\big)\big\|_{L^2}$. In order to meet this gap between $L^2$ and $H^{1/2}$, Grünrock and Herr \cite{Grn2, GH} proved local well-posedness in spaces $\widehat{H^s_p}(\mathbb X)$, where 
\begin{align*}
\|f\|_{\widehat H^{s}_p}:=\big\|\widehat{J^{s}f}\big\|_{L^{p'}}
\end{align*}
for $s\ge\tfrac12$ and $1<p\le2$, $\tfrac1p+\tfrac1{p'}=1$. In the non-periodic setting, S. Guo, Ren and Wang \cite{SGuoDNLS} recently generalized this result to modulation spaces $M^s_{2,q}$ with
\begin{align*}
\|f\|_{M^s_{2,q}}:=\Big(\sum_{k\in\mathbb Z}\langle k\rangle^{sq}\big\|\mathbbm1_{[k-\frac12,k+\frac12]}\widehat f\big\|_{L^2}^q\Big)^{1/q}
\end{align*}
for $q\in[2,\infty)$ and $s\ge\tfrac12$. In the scaling sense, $M^{1/2}_{2,q}$ is subcritical for $2\le q<\infty$ and critical for $q=\infty$.\\[10pt]
We show local well-posedness for initial data in $B^s_{2,\infty}(\mathbb X)$, $s\ge\tfrac12$:
\begin{satzk}
\label{W}
Let $s\ge\tfrac12$ and $k\in\mathbb N_0$. For any $r>0$, there exists $T=T(r)>0$ and a metric space $M_{s,T}$, such that for all $u_0\in B_r:=\{f\in B_{2,\infty}^s:\ \|f\|_{B^s_{2,\infty}}<r\}$, the equation \eqref{G2} has a unique solution $u\in M_{s,T}\hookrightarrow \mathcal C([-T,T],B^s_{2,\infty})$. The flow map
\begin{align*}
\tilde F\colon B_r\to \mathcal C([-T,T],B^s_{2,\infty}),\ u_0\mapsto u
\end{align*}
is continuous.
\end{satzk}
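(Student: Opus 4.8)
The plan is to follow Herr's two-step scheme \cite{Herr}: first remove the resonant derivative interaction by a gauge transformation, then solve the gauged equation by a contraction argument in a Besov-adapted Bourgain-type space, and finally transfer everything back. Expanding $\partial_x(|u|^2u)=2|u|^2\partial_x u+u^2\partial_x\bar u$, the term that is dangerous at $s=\tfrac12$ is the self-resonant $2|u|^2\partial_x u$. I would use the periodic-type gauge $\mathcal G$ of Herr, $\mathcal G(u)=e^{-i\mathcal I(u)}u$ with $\mathcal I(u)$ a primitive of $|u|^2-\mu$, where $\mu$ is proportional to $\|u\|_{L^2}^2$ in the periodic case and $\mu=0$ when $\mathbb X=\mathbb R$, so that both settings are handled at once. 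A direct computation shows that $u$ solves \eqref{G2} on $(-T,T)$ if and only if $v:=\mathcal G(u)$ solves a gauged equation of the schematic shape
\[ i\partial_t v+\partial_x^2 v=c_1\,v^2\partial_x\bar v+c_2\,|v|^4v+\mathcal N(v)+\mu\,\mathcal L(v)+\lambda\,e^{i\Theta(v)}|v|^{2k}v, \]
where now the derivative falls only on a conjugated factor, $\mathcal N(v)$ collects the remaining cubic terms (none of the form $|v|^2\partial_x v$), $\mathcal L(v)$ is essentially linear, $\Theta(v)$ is a real phase depending only on $|v|^2$, and the power term carries no derivative. One first checks that $\mathcal G$ and $\mathcal G^{-1}$ map $B^s_{2,\infty}(\mathbb X)$ into itself and, more delicately, preserve the solution spaces introduced below.

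I would build the solution space $Z^s$ by setting $\|v\|_{Z^s}=\sup_{N}2^{sN}\|P_Nv\|_{Y}$ over Littlewood--Paley pieces $P_Nv$, with $Y$ an $X^{b}$-type space (or a $U^2$/$V^2$-type space) at the scaling-critical level $b=\tfrac12$, strengthened so that it embeds into $\mathcal C_tB^s_{2,\infty}$; then $M_{s,T}$ is a closed ball of the time-restricted space $Z^s_T$. The linear input is standard: the free-evolution bound $\|U_\cdot u_0\|_{Z^s}\lesssim\|u_0\|_{B^s_{2,\infty}}$, a Duhamel estimate from the dual space $N^s$ into $Z^s$ with a gain $T^{\theta}$, $\theta>0$, after time localization, and the embedding $Z^s_T\hookrightarrow\mathcal C([-T,T],B^s_{2,\infty})$. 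For the multilinear step I would also record, in dyadic form and uniformly in $\mathbb X\in\{\mathbb R,\mathbb T\}$, the relevant Strichartz-type bounds: Bourgain's $L^4$ estimate in the periodic case (through the usual divisor/lattice-point count) and the $L^6$-Strichartz, local smoothing and maximal-function estimates in the non-periodic case.

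The core is then a family of multilinear estimates for the gauged nonlinearity, with constants independent of the profile: a trilinear bound $\|v_1v_2\,\partial_x\bar v_3\|_{N^s}\lesssim T^{\theta}\prod_j\|v_j\|_{Z^s}$, which exploits that after the gauge the surviving derivative hits a factor whose frequency is controlled either by the output frequency or by a large modulation; trilinear bounds for $\mathcal N(v)$; a quintic bound for $|v|^4v$; and a $(2k+1)$-linear bound for $e^{i\Theta(v)}|v|^{2k}v$ (a trivial linear term when $k=0$), the last two being soft consequences of the multiplicativity of $Y$ together with the Strichartz estimates. Combined with their multilinear difference versions, these turn $v\mapsto U_\cdot v_0+\int_0^tU_{t-t'}(\text{gauged nonlinearity})\,\mathrm dt'$ into a contraction on $M_{s,T}$ for $T=T(r)$ sufficiently small; one obtains a unique fixed point $v$, promotes uniqueness to all of $M_{s,T}$ by a short continuity-in-$T$ argument, and reads off Lipschitz dependence $v_0\mapsto v$ in $M_{s,T}$.

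Finally $u:=\mathcal G^{-1}(v)$ is the sought solution of \eqref{G2} in $M_{s,T}\hookrightarrow\mathcal C([-T,T],B^s_{2,\infty})$, unique there, and the flow $u_0\mapsto u$ is the composition $u_0\mapsto v_0\mapsto v\mapsto u$ of the gauge at $t=0$, the Lipschitz gauged flow, and the inverse gauge on the solution space; since the gauge maps are continuous---though in general not uniformly so on the Besov endpoint---the flow is continuous, the loss of uniform continuity being precisely what \cite{BiLi} leads one to expect at $s=\tfrac12$. I expect the main obstacle to be the endpoint trilinear estimate for $v^2\partial_x\bar v$ in the $B^{1/2}_{2,\infty}$-adapted space: passing from the $H^{1/2}$-theory to the Besov endpoint forces one to replace the $\ell^2$ summation over dyadic frequencies---where orthogonality and Cauchy--Schwarz carried the argument---by an $\ell^\infty$ summation, so the estimate must be extracted purely from the null/resonance structure of the gauged nonlinearity together with only logarithmic gains that survive the $\ell^\infty$ norm, and all of this uniformly across the periodic and non-periodic settings.
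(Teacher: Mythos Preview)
Your proposal follows essentially the same route as the paper: gauge away the resonant term $|u|^2\partial_xu$, solve the gauged equation by contraction in a Besov-adapted $X^{s,b}$-type space $\mathcal Z^s_T$ (built as $\sup_N$ over Littlewood--Paley pieces of the standard $X^{s,1/2}\cap Y^{s,0}$ norm), with the trilinear estimate for $v^2\partial_x\bar v$ as the core, and then undo the gauge. Two small simplifications relative to what you wrote: first, the gauged power term is exactly $\lambda|v|^{2k}v$ with no phase (since $|u|=|v|$), and after the gauge there are no leftover cubic terms $\mathcal N(v)$ beyond $v^2\partial_x\bar v$ and the zero-mode corrections; second, the paper does \emph{not} check that $\mathcal G$ preserves the solution space---it simply sets $M_{s,T}:=\mathcal G^{-1}(\mathcal Z^s_T)$ with the pulled-back metric and only uses that $\mathcal G$ is a locally bilipschitz homeomorphism on $\mathcal C([-T,T],B^s_{2,\infty})$, which follows from a Besov product estimate and is considerably easier than showing invariance of $\mathcal Z^s_T$.
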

Therefore, we point out that the method of \cite{Herr} is also applicable to the non-periodic setting with some slight modifications. We  extend this method to the Besov space setting by using several frequency-localization arguments. Noticing that $H^{1/2}(\mathbb X)\hookrightarrow B^{1/2}_{2,\infty}(\mathbb X)$, we improve the results of Takaoka \cite{Ta2} and Herr \cite{Herr}.\\[10pt]
Global well-posedness was shown by Hayashi and Ozawa \cite{HaOz2,Hay,HaOz} in the non-periodic setting for $u_0\in H^1$ with mass $\|u_0\|_{L^2}^2<2\pi$. For $\lambda=0$, Z. Guo and Wu \cite{Wu,GuWu} generalized this result to $u_0\in H^{1/2}$ with mass smaller than $4\pi$. Recently, Mosincat \cite{Mos} proved the same result in the periodic setting.\\[10pt]
There are also results for global weak solutions in Sobolev spaces corresponding to $H^1$ concerning Dirichlet and generalized periodic boundary conditions, compare for example \cite{Chen, Mesk}.\\[10pt]
The remainder of this paper is organized as follows: We complete this section with some general notation. In the second section, we briefly introduce the Gauge transformation and the Gauge equivalent Cauchy problem. In the third section, we establish the function spaces and basic estimates for the linear and the Duhamel term. In the fourth section, we prove the estimate for the trilinear derivative term $u^2\partial_x\overline u$. The fifth section treats the multilinear terms $|u|^{2k}u$ and $|u|^4u$. In the last section, we conclude local well-posedness for the Gauge equivalent problem which implies the statement of theorem \ref{W} by backward transformation.
\subsection*{Notation}
For $a,b\ge0$, we denote $a\lesssim b$ if $a\le cb\text{ for some $c>0$}$, $a\ll b$ if $Ca<b$ for a sufficiently large $C>1$ and $a\sim b$ if $C^{-1}a\le b\le Ca$ for a sufficiently large $C>1$. We write $\lesssim_\alpha$ if the implicit constants depends on a parameter $\alpha$.\\[10pt]
%
For measure spaces $\Omega_1,\Omega_2$ and product-measurable mappings $u\colon \Omega_1\times \Omega_2\to\mathbb C$, $(x,t)\mapsto u(x,t)$ such that $u(\,\cdot\,,t)\in X$ and $u(x,\,\cdot\,)\in Y$, we set
\begin{align*}
\|u\|_{Y_tX_x}:=\big\|t\mapsto\|x\mapsto u(x,t)\|_{X}\big\|_{Y}
\end{align*}
and shortly $\|u\|_{X_{t,x}}:=\|u\|_{X_tX_x}$ if $X=Y$.\\[10pt]
$\mathcal S(\mathbb R^n)$ denotes the space of all \emph{Schwartz functions} on $\mathbb R^n$, $\mathcal S(\mathbb T\times\mathbb R)$ the space of all functions $u\colon\mathbb R^2\to\mathbb C$ such that
\begin{align*}
u(x+2\pi,t)=u(x,t),\ u(\cdot,t)\in\mathcal C^\infty(\mathbb R),\ u(x,\cdot)\in\mathcal S(\mathbb R)
\end{align*}
and $\mathcal S(\mathbb T)$ stands for the space of $2\pi$-periodic $\mathcal C^\infty$-functions on $\mathbb R$.\\[10pt]
For $f\in\mathcal S(\mathbb R)$, we define the \emph{Fourier transform} $\widehat f$ via
\begin{align*}
\widehat{f}(\xi):=&\, \frac{1}{\sqrt{2\pi}}\int_{\mathbb R}e^{-ix\cdot\xi}f(x)\,\mathrm dx,\ \xi\in\mathbb R.
\end{align*}
For $f\in L^1(\mathbb T)$ and $g\in L^1(\mathbb Z)$, we denote 
\begin{align*}
\widehat{f}(\xi):=&\, \frac{1}{\sqrt{2\pi}}\int_{\mathbb T}e^{-ix\cdot \xi}f(x)\,\mathrm dx,\ \xi\in\mathbb Z.
\end{align*}
For $u\in\mathcal (\mathbb X\times\mathbb R)$, we set
\begin{align*}
\widehat u(\xi,\tau):=\frac{1}{2\pi}\int_{\mathbb R}\int_{\mathbb X}e^{i(x,t)\cdot(\xi,\tau)}u(x,t)\,\mathrm dx\,\mathrm dt.
\end{align*}
$J^{s}$ denotes the Bessel potential of order $-s$. This means
\begin{align*}
\widehat{J^sf}= \langle \cdot\rangle^s\widehat f,\ f\in\mathcal S(\mathbb X),
\end{align*}
where $\langle a\rangle:= (1+a^2)^{1/2},\ a\in\mathbb R.$\\[10pt]
$H^s(\mathbb X)$ is the \emph{Sobolev space} of order $s$ on $\mathbb X$, this means the completion of $\mathcal S(\mathbb X)$ with respect to the norm
\begin{align*}
\|f\|_{H^s}:=\|J^sf\|_{L^2}.
\end{align*}
For $u\in\mathcal S(\mathbb X\times\mathbb R)$ and $s\in\mathbb R$, we write
\begin{align*}
J^s_xu(x,t):=&\, J^s(u(t))(x),\\
\widehat{\Gamma_{\pm}^s}u(\xi,\tau):=&\, \langle\tau\pm\xi^2\rangle^{s}\widehat{u}(\xi,\tau)
\end{align*}
and shortly $\Gamma:=\Gamma_+$.\\[10pt]
We consider $\vec\xi= (\xi_1,...,\xi_n)$ and $\mathbb Y^n_\xi:= \{\vec\xi\in\mathbb Y^n:\ \sum_{j=1}^n\xi_j=\xi\}$, $\mathbb Y\in\{\mathbb R,\mathbb Z\}$. The convolution of functions $f_1,...,f_n$ on $\mathbb Y$ is written as
\begin{align*}
f_1\ast...\ast f_n(\xi)&= \int_{\mathbb Y^{n-1}}\prod_{j=1}^{n-1}f_j(\xi_j)\cdot f_n(\xi-\xi_1-...-\xi_{n-1})\,\mathrm d(\xi_1,...,\xi_{n-1})\\
&=:\int_{\mathbb Y^n_\xi}\prod_{j=1}^nf_j(\xi_j)\,\mathrm d\vec\xi,
\end{align*}
with integration with respect to the counting measure if $\mathbb Y=\mathbb Z$.\\[10pt]
Let $\chi\in\mathcal C^\infty(\mathbb R,[0,1])$ be radially decreasing such that $\chi=1$ on $[-1,1]$ and $\chi=0$ on $(-2,2)^\mathrm{c}$. For $T>0$, we introduce
\begin{align*}
\chi_T(\xi):=&\,\chi\left(\frac\xi T\right)-\chi\left(\frac{2\xi}{T}\right),\ 
\chi_{\le T}(\xi):=\chi\left(\frac\xi{T}\right).
\end{align*}
Hence $\mathrm{supp}\, \chi_T\subseteq\{\xi\in\mathbb R:\ \frac T2<|\xi|<2T\}$. 
In addition, let
\begin{align*}
\mathcal D:=&\,\{2^n:\ n\in\mathbb Z\}=\{N: N\text{ dyadic}\},\ 
\mathcal D_{1}:=\{N\ge 1:\ N\in\mathcal D\}.
\end{align*}
For $\xi\ne0$, there are not more than two $N\in\mathcal D$ such that $\chi_N(\xi)\ne0$. We have $\sum_{N\in\mathcal D}\chi_N(\xi)=1$ for all $\xi\ne0$ and $\chi_{\le1}(\xi)+\sum_{N\in\mathcal D}\chi_N(\xi)=1$ for all $\xi\in\mathbb R$. For $N\in\mathcal D_1$, $f\in\mathcal S(\mathbb X)$ and $u\in\mathcal S(\mathbb R\times\mathbb X)$, we denote
\begin{align*}
\widehat{P_Nf}(\xi):=&\, \begin{cases}\chi_N(\xi)\widehat f(\xi)\text{ for $N>1$},\\
\chi_{\le1}(\xi)\widehat f(\xi)(\xi)\text{ for $N=1$},
\end{cases}\\
P_Nu(x,t):=&\,P_N(u(t))(x).
\end{align*}
This means $\sum_{N\in\mathcal D_1}P_Nf=f$.\\[10pt]
Finally, let $\chi_{[0,1]}\in\mathcal C^{\infty}(\mathbb R)$ be a radially decreasing function satisfying $\chi_{[0,1]}=1$ on $[0,1]$ and $\chi_{[0,1]}=0$ on $(-1,2)^c$. For intervals $[a,b]$, we denote $\chi_{[a,b]}(\xi)=\chi_{[0,1]}(\tfrac{\xi-a}{b-a})$ and
\begin{align*}
P_{[a,b]}f(\xi):=&\, \chi_{[a,b]}(\xi)\widehat f(\xi),\ 
P_{[a,b]}u(x,t):= P_{[a,b]}(u(t))(x).
\end{align*}
\section{Gauge transformation}\label{Gauge}
We work with the gauge transformation as introduced by Hayashi and Ozawa \cite{HaOz2} for the non-periodic setting and adapted by Herr \cite{Herr} for the periodic setting.
\begin{defi}[Gauge transformation]
For 
\begin{align*}
J(f)(x):=&\,\int_{-\infty}^x|f(y)|^2\,\mathrm dy,\\
\mathcal J(f)(x):=&\,\frac{1}{2\pi}\int_0^{2\pi}\int_\vartheta^{x}\big(|f(y)|^2-\mu(f)\big)\,\mathrm dy\,\mathrm d\vartheta,\ 
\mu(f):=\tfrac1{2\pi}\|f\|_{L^2(\mathbb T)}^2,
\end{align*}
we define
\begin{align*}
G(f)(x):=&\, e^{-i J(f)(x)}f(x),\ f\in L^2(\mathbb R),\\
\mathcal G(f)(x):=&\, e^{-i\mathcal J(f)(x)}f(x),\ f\in L^2(\mathbb T),\\
G(u)(x,t):=&\, G(u(t))(x),\ u\in\mathcal C([-T,T],L^2(\mathbb R)),\\
\mathcal G(u)(x,t):=&\, \mathcal G(u(t))(x-2\mu(u)t),\ u\in\mathcal C(\mathbb [-T,T],L^2(\mathbb T)).
\end{align*}
\end{defi}
As shown in \cite{HaOz} and \cite{Herr}, we can consider the gauge equivalent problems
\begin{align*}
\begin{cases}
i\partial_tv+\partial_x^2v= -iv^2\partial_x\overline v-\frac12|v|^4v+\lambda |v|^{2k}v &\text{in }\mathbb R \times(-T,T),\\
v(0)= v_0 &\text{in }\mathbb R,
\end{cases}
\end{align*}
and
\begin{align*}
\begin{cases}
i\partial_tv+\partial_x^2v= -iv^2\partial_x\overline v-\frac12|v|^4v+\lambda|v|^{2k}v+\mu(v)|v|^2v-\psi(v)v &\text{in }\mathbb T \times(-T,T),\\
v(0)= v_0 &\text{in }\mathbb T,
\end{cases}
\end{align*}
where $\mu$ is defined as above and $\psi(v)(t):= \tfrac{1}{2\pi}\int_0^{2\pi}\big(2\mathrm{Im}(v\partial_x\overline v)(y,t)-\tfrac12|v|^4(y,t)\big)\,\mathrm dy+\mu(v)^2$. Denoting
\begin{align*}
\mathcal T(v)(t):= v(t)^2\partial_x\overline v(t),\ 
\mathcal Q(v)(t):= |v(t)|^4v(t)
\end{align*}
in the non-periodic setting and
\begin{align*}
\mathcal T(v)(t):=&\, v(t)^2\partial_x\overline v(t)-\frac{i}{2\pi}v(t)\int_0^{2\pi}2\mathrm{Im}(v\partial_x\overline v)(y,t)\,\mathrm dy,\\
\mathcal Q(v)(t):=&\, \Big(|v(t)|^4-\frac{1}{2\pi}\int_0^{2\pi}|v(t)|^4\,\mathrm dx\Big)v(t)\\
&\,\ -\frac1\pi\int_0^{2\pi}|v(t)|^2\,\mathrm dx\Big(|v(t)|^2-\frac{1}{2\pi}\int_0^{2\pi}|v(t)|^2\,\mathrm dx\Big)v(t)
\end{align*}
in the periodic setting leads to the Cauchy problem
\begin{align}
\begin{cases}
i\partial_tv+\partial_x^2v= -i\mathcal T(v)-\frac12\mathcal Q(v)+\lambda|v|^{2k}v\ &\,\text{in }\mathbb X \times(-T,T),\\
v(0)= v_0\ &\,\text{in }\mathbb X.
\end{cases}\label{TP2}\end{align}
In order to treat both settings simultaneously, the following characterizations of $\mathcal T$ and $\mathcal Q$ are helpful:
\begin{lemmak}
We have $\mathcal T(v)=\mathcal T(v,v,\overline v)$ and $\mathcal Q(v)=\mathcal Q(v,\overline v,v,\overline v,v)$, where
\begin{align*}
\mathcal T(v_1,v_2,v_3)\widehat{\textcolor[rgb]{1,1,1}{f}}(\xi,\tau)=&\, \frac{1}{2\pi}\int_{\mathbb R^3_\tau}\int_{\mathbb R^3_\xi}\widehat{v_1}(\xi_1,\tau_1)\widehat{v_2}(\xi_2,\tau_2)i\xi_3\widehat{{v_3}}(\xi_3,\tau_3)\,\mathrm d\vec\xi\,\mathrm d\vec\tau,\\
\mathcal Q(v_1,v_2,v_3,v_4,v_5)\widehat{\textcolor[rgb]{1,1,1}{f}}(\xi,\tau)=&\, \frac{1}{2\pi}\int_{\mathbb R^3_\tau}\int_{\mathbb R^3_\xi}\prod_{j=1}^5\widehat{v_j}(\xi_j,\tau_j)\,\mathrm d\vec\xi\,\mathrm d\vec\tau
\end{align*}
in the non-periodic setting and
\begin{align*}
\mathcal T(v_1,v_2,v_3)\widehat{\textcolor[rgb]{1,1,1}{f}}(\xi,\tau)
=&\, \frac{1}{(2\pi)^{3/2}}\int_{\mathbb R^3_\tau}\underset{\xi_1,\xi_2\ne\xi}{\sum_{\xi_1+\xi_2+\xi_3=\xi}}\widehat{v_1}(\xi_1,\tau_1)\widehat{v_2}(\xi_2,\tau_2)i\xi_3\widehat{{v_3}}(\xi_3,\tau_3)\,\mathrm d\vec\tau\nonumber\\
&\,\ +\frac{1}{(2\pi)^{3/2}}\int_{\mathbb R^3_\tau}\widehat{v_1}(\xi,\tau_1)\widehat{v_2}(\xi,\tau_2)i\xi\widehat{{v_3}}(-\xi,\tau_3)\,\mathrm d\vec\tau,\\
\mathcal Q(v_1,v_2,v_3,v_4,v_5)\widehat{\textcolor[rgb]{1,1,1}{f}}(\xi,\tau)
&\,= \frac{1}{(2\pi)^{5/2}}\int_{\mathbb R^3_\tau}\underset{\xi_1+\xi_2+\xi_3+\xi_4,\,\xi_1+\xi_2,\,\xi_3+\xi_4\ne0}{\sum_{\xi_1+\xi_2+\xi_3+\xi_4+\xi_5=\xi}}\prod_{j=1}^5\widehat{v_j}(\xi_j,\tau_j)\,\mathrm d\vec\tau
\end{align*}
in the periodic setting.
\end{lemmak}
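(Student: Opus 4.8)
The plan is to establish both identities by a direct computation on the Fourier side, using only the convolution theorem and three elementary properties of the (space-time) Fourier transform: it intertwines pointwise products with constant multiples of iterated convolutions, it sends $\partial_x$ to multiplication by $i\xi$, and it satisfies $\widehat{\overline u}(\xi,\tau)=\overline{\widehat u(-\xi,-\tau)}$. Every manipulation below is legitimate for $v\in\mathcal S(\mathbb X\times\mathbb R)$, which is all that is needed.

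I would do the non-periodic case first. There the definitions read $\mathcal T(v)=v\cdot v\cdot\partial_x\overline v$ and $\mathcal Q(v)=|v|^4v=v\cdot\overline v\cdot v\cdot\overline v\cdot v$, with no mean corrections. Transforming both sides and absorbing each complex conjugate into the corresponding argument --- that is, taking $v_3=\overline v$ in $\mathcal T(v_1,v_2,v_3)$ and $v_2=v_4=\overline v$ in $\mathcal Q(v_1,\dots,v_5)$ --- gives exactly the claimed multilinear representations: the factor $i\xi_3$ in $\mathcal T$ is merely the symbol of $\partial_x$ acting on the third slot, and no frequency restriction appears because nothing is subtracted.

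The periodic case is the same computation with the spatial-mean corrections reinstated. The decisive remark is that for $g\in L^1(\mathbb T)$ one has $\tfrac1{2\pi}\int_0^{2\pi}g(y)\,\mathrm dy=\tfrac1{\sqrt{2\pi}}\widehat g(0)$, so that multiplying by such a mean, after transforming, simply restricts the corresponding sub-convolution to its zero-frequency fibre. For $\mathcal T$, an integration by parts rewrites the definition as $\mathcal T(v)=v^2\partial_x\overline v-\tfrac1\pi\,v\int_0^{2\pi}v\,\partial_x\overline v\,\mathrm dy$, and a short computation shows that the correction term is exactly $-2$ times the part of $\widehat{v^2\partial_x\overline v}$ carried by the set $\{\xi_1=\xi\}$ --- equivalently, by the $\xi_1\leftrightarrow\xi_2$-symmetry of that sum, by $\{\xi_2=\xi\}$. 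Inclusion--exclusion over the two resonant sets $\{\xi_1=\xi\}$ and $\{\xi_2=\xi\}$ then turns $\widehat{\mathcal T(v)}$ into the sum over $\xi_1,\xi_2\ne\xi$ plus one leftover boundary term coming from $\xi_1=\xi_2=\xi$ (hence $\xi_3=-\xi$), which is precisely the stated formula. For $\mathcal Q$, writing $\mu(v)=\tfrac1{2\pi}\int_0^{2\pi}|v|^2\,\mathrm dx$ and denoting spatial means by overlines, the definition rearranges to $\mathcal Q(v)=|v|^4v-\overline{|v|^4}\,v-2\mu(v)|v|^2v+2\mu(v)^2v$, whose transforms are, in turn, the full quintilinear sum and its restrictions to $E_3=\{\xi_1+\xi_2+\xi_3+\xi_4=0\}$, to $E_1=\{\xi_1+\xi_2=0\}$ resp.\ $E_2=\{\xi_3+\xi_4=0\}$ (each with coefficient one, using the symmetry $(\xi_1,\xi_2)\leftrightarrow(\xi_3,\xi_4)$), and to $E_1\cap E_2$ (with coefficient two). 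Since $E_1\cap E_2\subseteq E_3$ and $E_1\cap E_3=E_2\cap E_3=E_1\cap E_2$, inclusion--exclusion collapses this alternating combination to the sum over $\xi_1+\xi_2\ne0$, $\xi_3+\xi_4\ne0$ and $\xi_1+\xi_2+\xi_3+\xi_4\ne0$, with no leftover term, as claimed.

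I do not expect a genuine obstacle: the whole argument is bookkeeping. The points that require care are keeping signs and normalising constants straight once $\partial_x$ and the complex conjugations are combined, and, in the periodic case, matching each mean correction to the correct resonant frequency set and pinning down the multiplicities in the inclusion--exclusion. I would double-check the latter by computing the $\{\xi_1=\xi_2=\xi\}$ contribution to $\mathcal T(v)$ and the $\mu(v)^2v$ contribution to $\mathcal Q(v)$ explicitly.
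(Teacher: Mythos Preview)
Your proposal is correct and follows the same route as the paper: both the non-periodic and periodic identities are obtained by a direct Fourier-side computation. The paper's own proof is in fact much terser than yours---it dispatches the non-periodic case in one sentence as ``a direct consequence of the elementary properties of the Fourier transformation'' and for the periodic case simply cites \cite[Lemma~6.4]{GH}---so your sketch of the inclusion--exclusion bookkeeping is more explicit than what the paper provides. One small point worth recording when you write it up: in the leftover boundary term for $\mathcal T$, the lemma displays the factor $i\xi$ rather than $i\xi_3$, and since $\xi_3=-\xi$ there this absorbs the minus sign that your inclusion--exclusion produces; this is exactly the sign check you flagged at the end.
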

\begin{proof}
In the non-periodic case, this is a direct consequence of the elementary properties of the Fourier transformation. For the periodic setting, compare \cite[Lemma 6.4]{GH}.
\end{proof}
\section{Spaces: Definition and basic properties}
\begin{defk}
For $s\in\mathbb R$, $p\in(0,\infty]$, $q\in(0,\infty]$, we define the \emph{Besov space} $B_{p,q}^s(\mathbb X)$ as the completion of $\mathcal S(\mathbb X)$ with respect to the norm
\begin{align*}
{\|f\|}_{B^s_{p,q}(\mathbb X)}:=\begin{cases}\|P_{1}f\|_{L^p(\mathbb X)}+\big(\sum_{N>1}N^{sq}\|P_Nf\|_{L^p(\mathbb X)}^q\big)^{1/q}&\,\text{ if }q<\infty,\\
\|P_{1}f\|_{L^p(\mathbb X)}+\sup_{N>1}N^s\|P_Nf\|_{L^p(\mathbb X)}&\,\text{ if }q=\infty,
\end{cases}
\end{align*}
where we take the supremum and the sum over dyadic numbers $N$. 
\end{defk}
\begin{defk}\label{D1}
Let $s,b\in\mathbb R$. We define $X^{s,b,\pm}(\mathbb X)$, $Y^{s,b}(\mathbb X)$, $Z^s(\mathbb X)$, $\mathfrak X^{s,b,\pm}(\mathbb X)$, $\mathcal Y^{s,b}(\mathbb X)$, $\mathcal Z^s(\mathbb X)$ as the completions of $\mathcal S(\mathbb X\times\mathbb R)$ with respect to the norms
\begin{align*}
{\|u\|}_{X^{s,b,\pm}}&\,:= {\|\left\langle\xi\right\rangle^s\left\langle\tau\pm\xi^2\right\rangle^b\widehat{u}(\xi,\tau)\|}_{L^2_\xi(\mathbb Y) L^2_\tau(\mathbb R)},\\
{\|u\|}_{Y^{s,b}}&\,:= {\|\left\langle\xi\right\rangle^s\left\langle\tau+\xi^2\right\rangle^b\widehat{u}(\xi,\tau)\|}_{L^2_\xi(\mathbb Y) L^1_\tau(\mathbb R)},\\
{\|u\|}_{Z^s}&\,:= {\|u\|}_{X^{s,\frac12}}+ {\|u\|}_{Y^{s,0}},\\
{\|u\|}_{\mathfrak X^{s,b,\pm}}&\,:= \|P_{1}u\|_{X^{s,b,\pm}}+\sup_{N>1}\|P_Nu\|_{X^{s,b,\pm}},\\
{\|u\|}_{\mathcal Y^{s,b}}&\,:= \|P_{1}u\|_{Y^{s,b}}+\sup_{N>1}\|P_Nu\|_{Y^{s,b}},\\
{\|u\|}_{\mathcal Z^{s}}&\,:= \|P_{1}u\|_{Z^{s}}+\sup_{N>1}\|P_Nu\|_{Z^{s}}.
\end{align*}
For $T>0$, we consider the space $\mathcal Z^s_T(\mathbb X):=\left\{u\big|_{[-T,T]}:\ u\in\mathcal Z^s(\mathbb X)\right\}$ with norm
\begin{align*}
{\|u\|}_{\mathcal Z^s_T}&\,:= \inf\left\{{\|v\|}_{Z^{s}}:\ u=v\big|_{[-T,T]},\, v\in\mathcal Z^s(\mathbb X)\right\}.
\end{align*}
\end{defk}
The following estimates for the linear term, the Duhamel term and for the behavior under multiplication with smooth cutoffs can be found e.g. in \cite{GrnDiss, Herr} for the case without frequency-localization. With trivial modifications, they remain true in our setting:
\begin{lemmak}
Let $s\ge 0$, $u_0\in B^s_{2,\infty}(\mathbb X)$ and $w\in\mathcal S(\mathbb X\times\mathbb R)$ such that $\mathrm{supp}\,w\subseteq\mathbb X\times[-2,2]$. Then
\begin{align*}
\|\chi(t)U_tu_0\|_{\mathcal Z^s}\lesssim&\,\|u_0\|_{B^s_{2,\infty}},\\
\Big\|\chi(t)\int_0^tU_{t-t'}w(t')\,\mathrm dt'\Big\|_{\mathcal Z^s}\lesssim&\,\|w\|_{\mathfrak X^{s,-\frac12}\cap\mathcal Y^{s,-1}}.
\end{align*}
For $u\in\mathcal S(\mathbb X\times\mathbb R)$, $s\in\mathbb R$, $0\le b_1<b_2<\tfrac12$, ${N}\in\mathcal D_1$, $\delta>0$ and $T\in(0,1]$, we have
\begin{align*}
\|P_N(\chi_T(t)u)\|_{Y^{s,0}}\lesssim&\, \|P_Nu\|_{Y^{s,0}},\\
\|P_N(\chi_T(t)u)\|_{X^{s,\frac12}}\lesssim&\, T^{-\delta}\|P_Nu\|_{X^{s,\frac12}},\\
\|P_N(\chi_T(t)u)\|_{X^{s,b_1,\pm}}\lesssim&\, T^{b_2-b_1}\|P_Nu\|_{X^{s,b_2,\pm}}.
\end{align*}
For $T>0$, the embedding $\mathcal Z^s_T\hookrightarrow\, \mathcal C([-T,T],B^s_{2,\infty})$ holds true.
\end{lemmak}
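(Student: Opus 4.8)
The plan is to deduce each assertion from its counterpart \emph{without} frequency localization --- all of which are recorded in \cite{GrnDiss, Herr} and cover both settings $\mathbb X=\mathbb R$ and $\mathbb X=\mathbb T$ --- using that the Littlewood--Paley projection $P_N$ is a Fourier multiplier in the spatial variable alone. Consequently $P_N$ commutes with the free evolution $U_t$, with multiplication by any function of $t$ (in particular with $\chi(t)$ and $\chi_T(t)$) and hence with the Duhamel operator $w\mapsto\chi(t)\int_0^tU_{t-t'}w(t')\,\mathrm dt'$; moreover $\mathrm{supp}\,P_Nw\subseteq\mathbb X\times[-2,2]$ whenever $\mathrm{supp}\,w$ is. Throughout I would use that $\langle\xi\rangle\sim N$ on $\mathrm{supp}\,\chi_N$ for $N>1$ --- so that any $\langle\xi\rangle^s$-weight restricted to a single block $P_N$ is comparable, with constant depending only on $s$, to $N^s$ times the corresponding unweighted norm --- while $\langle\xi\rangle\sim1$ on $\mathrm{supp}\,\chi_{\le1}$, so that for $N=1$ the weighted and unweighted norms are merely equivalent. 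With this in hand the three cutoff estimates are immediate: since $P_N(\chi_T(t)u)=\chi_T(t)\,P_Nu$, the claimed bounds are exactly the non-localized estimates $\|\chi_T v\|_{Y^{s,0}}\lesssim\|v\|_{Y^{s,0}}$, $\|\chi_T v\|_{X^{s,1/2}}\lesssim T^{-\delta}\|v\|_{X^{s,1/2}}$, $\|\chi_T v\|_{X^{s,b_1,\pm}}\lesssim T^{b_2-b_1}\|v\|_{X^{s,b_2,\pm}}$ of \cite{GrnDiss, Herr} applied to $v:=P_Nu$; no summation over $N$ is involved because the projection already appears on both sides.

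For the linear estimate I would write $P_N(\chi(t)U_tu_0)=\chi(t)U_t(P_Nu_0)$ and apply the non-localized bound $\|\chi(t)U_tf\|_{Z^s}\lesssim\|f\|_{H^s}$ to $f=P_Nu_0$, obtaining $\|P_N(\chi(t)U_tu_0)\|_{Z^s}\lesssim\|P_Nu_0\|_{H^s}$, which is $\lesssim N^s\|P_Nu_0\|_{L^2}$ for $N>1$ and $\lesssim\|P_1u_0\|_{L^2}$ for $N=1$. Taking the supremum over $N>1$ and adding the $N=1$ term then reproduces $\|\cdot\|_{\mathcal Z^s}$ on the left and $\|u_0\|_{B^s_{2,\infty}}$ on the right, by the very definitions of those norms. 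The Duhamel estimate goes the same way: commute $P_N$ inside, apply $\|\chi(t)\int_0^tU_{t-t'}w(t')\,\mathrm dt'\|_{Z^s}\lesssim\|w\|_{X^{s,-1/2}\cap Y^{s,-1}}$ to $P_Nw$, and take the supremum over $N$, using $\|P_Nw\|_{X^{s,-1/2}}\le\|w\|_{\mathfrak X^{s,-1/2}}$ and $\|P_Nw\|_{Y^{s,-1}}\le\|w\|_{\mathcal Y^{s,-1}}$ for each $N$ to recover the right-hand side $\|w\|_{\mathfrak X^{s,-1/2}\cap\mathcal Y^{s,-1}}$.

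For the embedding $\mathcal Z^s_T\hookrightarrow\mathcal C([-T,T],B^s_{2,\infty})$ I would start from the elementary estimate $\sup_t\|v(t)\|_{H^\sigma}\lesssim\|v\|_{Y^{\sigma,0}}$ (immediate from $|\widehat{v(t)}(\xi)|\le(2\pi)^{-1/2}\|\widehat v(\xi,\cdot)\|_{L^1_\tau}$), which at frequency $N$ reads $\sup_t\|P_Nv(t)\|_{L^2}\lesssim\|P_Nv\|_{Y^{0,0}}$. Multiplying by $N^s$ (and treating $N=1$ with $\langle\xi\rangle\sim1$) and bounding $\|P_Nv\|_{Y^{s,0}}\le\|P_Nv\|_{Z^s}\le\|v\|_{\mathcal Z^s}$, I obtain $N^s\sup_t\|P_Nv(t)\|_{L^2}\lesssim\|v\|_{\mathcal Z^s}$, hence $\sup_t\|v(t)\|_{B^s_{2,\infty}}\lesssim\|v\|_{\mathcal Z^s}$ after taking the supremum over $N$ and adding the $N=1$ contribution. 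Since $t\mapsto v(t)$ is continuous into $B^s_{2,\infty}$ for $v\in\mathcal S(\mathbb X\times\mathbb R)$ (it is $H^s$-valued continuous and $H^s\hookrightarrow B^s_{2,\infty}$), density in $\mathcal Z^s$ together with this uniform bound gives $\mathcal Z^s\hookrightarrow\mathcal C(\mathbb R,B^s_{2,\infty})$, and passing to restrictions through the infimum defining $\|\cdot\|_{\mathcal Z^s_T}$ yields the local embedding.

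I do not expect a real obstacle: the lemma is exactly of the ``trivial modifications of known results'' type announced just above it. The one point I would be careful about is that every implicit constant stay independent of the dyadic frequency $N$; this is ensured by invoking the cited estimates at regularity index $0$ on a single Littlewood--Paley block and only afterwards reinstating the weight $N^s$, so that the final constant depends on $s$ (and on $\delta,b_1,b_2$) but not on $N$, together with the entirely routine bookkeeping of the low-frequency block $N=1$.
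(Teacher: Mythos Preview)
Your proposal is correct and is precisely the approach the paper takes: the paper gives no detailed argument here, merely citing \cite{GrnDiss, Herr} for the non-localized versions and asserting that ``with trivial modifications, they remain true in our setting.'' You have spelled out exactly those trivial modifications --- commuting $P_N$ through $U_t$, time cutoffs, and the Duhamel operator, then applying the cited estimates block by block and taking the supremum --- and your handling of the embedding and of the $N$-independence of constants is accurate.
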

The following statements can be found again in \cite{GrnDiss,Herr}:
\begin{lemmak}
Let $u\in\mathcal S(\mathbb X\times \mathbb R)$. Then
\begin{align}
\|u\|_{Y^{s,b_1}}\lesssim&\, \|u\|_{X^{s,b_2}}\ \ \forall\,b_2>b_1+\frac12,\label{XY}\\
\|u\|_{L^p_tL^q_x}\lesssim&\, \|u\|_{X^{s,b,\pm}}\ \ \forall\,p,q\in[2,\infty),\ b\ge\frac12-\frac1p,\ s\ge\frac12-\frac1q,\label{11}\\
\|u\|_{X^{s,b,\pm}}\lesssim&\, \|u\|_{L^p_tH^s_x}\ \ \forall\,p\in(1,2],\ b\le \frac12-\frac1p.\label{12}
\end{align}
\end{lemmak}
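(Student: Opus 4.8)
The plan is to treat the three estimates separately, reducing each to a one-dimensional inequality in the modulation variable $\tau$ (for each fixed spatial frequency $\xi$) by means of Plancherel in $x$; for \eqref{11} and \eqref{12} one first reduces to $s=0$ by applying the estimate to $J^s_xu$ in place of $u$, since $\|u\|_{X^{s,b,\pm}}=\|J^s_xu\|_{X^{0,b,\pm}}$ and likewise for the $L^p_tH^s_x$-norm. Writing $\mathcal F_xu(\xi,t)$ for the Fourier transform of $u$ in the $x$-variable only, the device that handles the weight $\langle\tau\pm\xi^2\rangle$ is the elementary identity (translation invariance of $\langle\,\cdot\,\rangle$ together with the modulation rule for $\mathcal F_t$)
\begin{align*}
\big\|\langle\tau\pm\xi^2\rangle^{b}\,\widehat u(\xi,\cdot)\big\|_{L^2_\tau}=\big\|\,t\mapsto e^{\pm it\xi^2}\,\mathcal F_xu(\xi,t)\,\big\|_{H^b_t},
\end{align*}
valid for every $\xi$: multiplying $\mathcal F_xu(\xi,\cdot)$ by the unimodular factor $e^{\pm it\xi^2}$ converts the $X^{s,b,\pm}$-weight into an ordinary inhomogeneous $H^b$-weight in time. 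Since the time axis is $\mathbb R$ in both settings and Plancherel holds on $\mathbb R$ and on $\mathbb T$ (with $\xi\in\mathbb Z$), nothing in the argument distinguishes $\mathbb X=\mathbb R$ from $\mathbb X=\mathbb T$.

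Estimate \eqref{XY} is immediate: fixing $\xi$ and writing $\langle\tau+\xi^2\rangle^{b_1}=\langle\tau+\xi^2\rangle^{b_1-b_2}\langle\tau+\xi^2\rangle^{b_2}$, Cauchy--Schwarz in $\tau$ bounds $\|\langle\tau+\xi^2\rangle^{b_1}\widehat u(\xi,\cdot)\|_{L^1_\tau}$ by $\|\langle\tau\rangle^{b_1-b_2}\|_{L^2_\tau}\cdot\|\langle\tau+\xi^2\rangle^{b_2}\widehat u(\xi,\cdot)\|_{L^2_\tau}$, and the first factor is finite and independent of $\xi$ precisely because $2(b_2-b_1)>1$; multiplying by $\langle\xi\rangle^s$ and taking $L^2_\xi$ gives the claim.

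For \eqref{11} I would first invoke the sharp one-dimensional Sobolev embedding $H^s(\mathbb X)\hookrightarrow L^q(\mathbb X)$, valid on $\mathbb R$ and on $\mathbb T$ for $q\in[2,\infty)$ and $s\ge\tfrac12-\tfrac1q$, to get $\|u\|_{L^p_tL^q_x}\lesssim\|u\|_{L^p_tH^s_x}$, and then reduce to $s=0$. Plancherel in $x$ and Minkowski's inequality (here $p\ge2$) give $\|u\|_{L^p_tL^2_x}=\|\mathcal F_xu\|_{L^p_tL^2_\xi}\le\|\mathcal F_xu\|_{L^2_\xi L^p_t}$; for each fixed $\xi$ the displayed identity together with the sharp time-Sobolev embedding $H^b(\mathbb R)\hookrightarrow L^p(\mathbb R)$ (using $b\ge\tfrac12-\tfrac1p$ and $p<\infty$) yields $\|\mathcal F_xu(\xi,\cdot)\|_{L^p_t}\lesssim\|\langle\tau\pm\xi^2\rangle^b\widehat u(\xi,\cdot)\|_{L^2_\tau}$, and taking $L^2_\xi$ closes the estimate. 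Estimate \eqref{12} uses the same ingredients in the opposite order: after reducing to $s=0$, Fubini and the displayed identity express $\|u\|_{X^{0,b,\pm}}$ as the $L^2_\xi$-norm of $\|e^{\pm i(\cdot)\xi^2}\mathcal F_xu(\xi,\cdot)\|_{H^b_t}$; the dual Sobolev embedding $L^p(\mathbb R)\hookrightarrow H^b(\mathbb R)$ (valid for $p\in(1,2]$, $b\le\tfrac12-\tfrac1p$, by duality with $H^{-b}(\mathbb R)\hookrightarrow L^{p'}(\mathbb R)$) bounds this by the $L^2_\xi$-norm of $\|\mathcal F_xu(\xi,\cdot)\|_{L^p_t}$, i.e.\ by $\|\mathcal F_xu\|_{L^2_\xi L^p_t}$; finally Minkowski's inequality (now using $p\le2$) moves $L^2_\xi$ outward and Plancherel in $x$ brings back $\|u\|_{L^p_tL^2_x}=\|u\|_{L^p_tH^s_x}$.

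The only subtle point --- and the only place where the stated ranges of $b$ are sharp --- is the endpoint $b=\tfrac12-\tfrac1p$ in \eqref{11} and \eqref{12}: handling the $\tau$-integral by Hausdorff--Young followed by H\"older loses precisely this endpoint, so it is essential to phrase the key step as the one-dimensional Sobolev (resp.\ dual Sobolev) embedding \emph{at its critical exponent}, which still holds for $p<\infty$ (resp.\ $p>1$). Everything else is routine; the details, together with the frequency-localized variants needed later in the paper, are essentially those in \cite{GrnDiss} and \cite{Herr}.
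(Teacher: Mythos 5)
The proposal is correct and is essentially the standard argument (which the paper does not reproduce; it simply cites \cite{GrnDiss,Herr} for this lemma). You correctly reduce all three bounds, via Plancherel in $x$ and the modulation identity converting the $\langle\tau\pm\xi^2\rangle^b$-weight into an $H^b_t$-norm, to one-dimensional Sobolev embeddings in the $t$-variable; you apply Minkowski's inequality in the right direction in both \eqref{11} (needing $p\ge2$ to pass from $L^p_tL^2_\xi$ to $L^2_\xi L^p_t$) and \eqref{12} (needing $p\le2$ for the reverse); the spatial Sobolev embedding $H^s(\mathbb X)\hookrightarrow L^q(\mathbb X)$ is used to dispose of the $q$-exponent; and you correctly single out the reason the critical indices $b=\tfrac12-\tfrac1p$, $s=\tfrac12-\tfrac1q$ are attainable, namely that one must invoke the genuine one-dimensional Sobolev embedding at its critical exponent rather than the Hausdorff--Young/H\"older route, which only gives the open range. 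The verification of \eqref{XY} by Cauchy--Schwarz in $\tau$ with the translation-invariant factor $\|\langle\tau\rangle^{b_1-b_2}\|_{L^2_\tau}<\infty$ for $b_2>b_1+\tfrac12$ is likewise correct. The only cosmetic point is the sign of the time-modulation factor relative to the $\pm$ in $\langle\tau\pm\xi^2\rangle^b$, which depends on the chosen Fourier sign convention, but since both signs appear in the lemma this has no bearing on the argument.
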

\begin{lemmal}[Strichartz Estimates]
Let $u\in\mathcal S(\mathbb X\times\mathbb R)$. We have
\begin{align}
\|u\|_{L^4_{t,x}}\lesssim&\, \|u\|_{X^{0,b,\pm}}\ \ \forall\,b>\frac38,\label{L41}\\
\|u\|_{X^{0,b,\pm}}\lesssim&\, \|u\|_{L^{4/3}_{t,x}}\ \ \forall\,b<-\frac38\label{L43}.
\end{align}
For $\mathbb X=\mathbb R$ and $b>\tfrac12$, $p\in(2,\infty]$, $q\in[2,\infty]$ satisfying $\tfrac2p+\tfrac1q=\tfrac12$, it also holds that
\begin{align}
\|u\|_{L^p_tL^q_x}\lesssim&\, \|u\|_{X^{0,b,\pm}}\label{LpLq}.
\end{align}
Finally, for $\tilde p$ with $\tfrac1{\tilde p}= \vartheta\tfrac16+(1-\vartheta)\tfrac12$, $\vartheta\in(0,1)$, $b>\tfrac12$, we have
\begin{align}
\|u\|_{L^{\tilde p}_{t,x}}\lesssim \|u\|_{X^{0,\vartheta b,\pm}}.\label{lplq}
\end{align}
\end{lemmal}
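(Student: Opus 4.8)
The plan is to derive all four estimates from two external inputs — the classical Strichartz estimates for the free Schrödinger group on $\mathbb R$ and Bourgain's periodic $L^4$ Strichartz estimate — using the transference principle, duality and complex interpolation of the scale $\{X^{0,b,\pm}\}_b$. I would begin with \eqref{LpLq}. For an admissible pair $(p,q)$ in dimension one (i.e.\ $\tfrac2p+\tfrac1q=\tfrac12$, $p>2$), the homogeneous estimate $\|U_{\pm t}f\|_{L^p_tL^q_x(\mathbb R\times\mathbb R)}\lesssim\|f\|_{L^2}$ is the classical Strichartz inequality (for $q=2$ it is just the unitarity of $U_{\pm t}$). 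The transference principle then upgrades this to \eqref{LpLq}: writing $\widehat u(\xi,\tau)=\langle\tau\pm\xi^2\rangle^{-b}g(\xi,\tau)$ with $\|g\|_{L^2_{\xi,\tau}}=\|u\|_{X^{0,b,\pm}}$ and substituting $\lambda=\tau\pm\xi^2$, one gets $u(x,t)=c\int_{\mathbb R}e^{it\lambda}\langle\lambda\rangle^{-b}(U_{\pm t}h_\lambda)(x)\,\mathrm d\lambda$ with $\widehat{h_\lambda}(\xi)=g(\xi,\lambda\mp\xi^2)$ and $\int_{\mathbb R}\|h_\lambda\|_{L^2}^2\,\mathrm d\lambda=\|g\|_{L^2}^2$; now Minkowski's inequality in $L^p_tL^q_x$, the Strichartz bound applied to each $U_{\pm t}h_\lambda$, and Cauchy--Schwarz in $\lambda$ (convergent because $b>\tfrac12$) give the claim.

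Next, \eqref{L41}. In the non-periodic case I would specialise \eqref{LpLq} to $p=q=6$ (admissible, since $\tfrac26+\tfrac16=\tfrac12$), obtaining $\|u\|_{L^6_{t,x}}\lesssim\|u\|_{X^{0,b,\pm}}$ for $b>\tfrac12$, and interpolate it against the trivial identity $\|u\|_{L^2_{t,x}}=\|u\|_{X^{0,0,\pm}}$. Since the $X^{0,b,\pm}$ are weighted $L^2$-spaces on the space-time Fourier side, complex interpolation at $\theta=\tfrac34$ gives $[X^{0,0,\pm},X^{0,b,\pm}]_{3/4}=X^{0,\frac34 b,\pm}$ and $[L^2_{t,x},L^6_{t,x}]_{3/4}=L^4_{t,x}$, hence $\|u\|_{L^4_{t,x}}\lesssim\|u\|_{X^{0,\frac34 b,\pm}}$; letting $b\downarrow\tfrac12$ yields \eqref{L41} for every modulation exponent above $\tfrac38$. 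In the periodic case, \eqref{L41} is precisely the $X^{s,b}$-formulation of Bourgain's $L^4$ Strichartz estimate for the periodic Schrödinger equation, which I would cite (the $+$ and $-$ versions being equivalent via complex conjugation, i.e.\ $t\mapsto-t$). Estimate \eqref{L43} is then the dual statement: $X^{0,b,\pm}$ is the dual of $X^{0,-b,\pm}$ under the $L^2_{t,x}$-pairing, so for $b<-\tfrac38$ and $w\in\mathcal S(\mathbb X\times\mathbb R)$ we have $|\langle u,w\rangle_{L^2_{t,x}}|\le\|u\|_{L^{4/3}_{t,x}}\|w\|_{L^4_{t,x}}\lesssim\|u\|_{L^{4/3}_{t,x}}\|w\|_{X^{0,-b,\pm}}$ by \eqref{L41} (note $-b>\tfrac38$); taking the supremum over $\|w\|_{X^{0,-b,\pm}}\le1$ gives \eqref{L43}.

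Finally, \eqref{lplq} follows by the same interpolation as for \eqref{L41} but with a free parameter: interpolating $\|u\|_{L^6_{t,x}}\lesssim\|u\|_{X^{0,b,\pm}}$ ($b>\tfrac12$) against $\|u\|_{L^2_{t,x}}=\|u\|_{X^{0,0,\pm}}$ at $\vartheta\in(0,1)$ produces $L^{\tilde p}_{t,x}$ with $\tfrac1{\tilde p}=\vartheta\tfrac16+(1-\vartheta)\tfrac12$ and modulation exponent $\vartheta b$, which is exactly \eqref{lplq}. The genuine mathematical depth here lies entirely in the two imported ingredients; within this paper the only points needing care are the transference principle (where $b>\tfrac12$ is used to make the $\lambda$-integral converge), the duality identification of $X^{0,b,\pm}$, and the interpolation bookkeeping. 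I therefore do not expect a real obstacle — if anything, the delicate part is simply to combine the non-periodic Strichartz estimates with Bourgain's periodic bound so that the statement holds uniformly in the parameter $\mathbb X$, and to check that the complex-interpolation step legitimately pushes the modulation exponent in \eqref{L41} down from $\tfrac12$ to $\tfrac38$.
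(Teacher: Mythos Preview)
Your proposal is correct and follows essentially the same route as the paper: cite the classical Strichartz estimates (transference principle) for \eqref{LpLq}, cite Bourgain for the periodic \eqref{L41}, obtain the non-periodic \eqref{L41} and \eqref{lplq} by interpolating the $L^6$-endpoint of \eqref{LpLq} against the trivial $L^2$-identity, and deduce \eqref{L43} by duality. The only cosmetic difference is the order of presentation --- the paper first states \eqref{lplq} as the interpolated family and then reads off \eqref{L41} at $\vartheta=\tfrac34$, whereas you do the $\vartheta=\tfrac34$ case first and then record the general $\vartheta$ --- but the content is identical.
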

\begin{proof}
Since $\|\cdot\|_{L^p_tL^q_x}$ is invariant under complex conjugation, it suffices to consider $\|\cdot\|_{X^{s,b,+}}$. In the periodic setting, \eqref{L41} and \eqref{L43} have been shown in \cite[Lemma 2.1]{Grn}. Since $(p,q)$ is a Strichartz pair, we obtain \eqref{LpLq}, compare for example \cite[Lemma 2.3]{GTV}. Estimate \eqref{lplq} can be concluded by interpolation (compare for example \cite[Lemma 1.4]{GrnDiss}) between \eqref{LpLq} and the trivial statement
\begin{align*}
\|u\|_{L^2_{t,x}}=\|u\|_{X^{0,0}}.
\end{align*}
In the non-periodic setting, \eqref{L41} is a direct consequence of \eqref{lplq} by plugging in $\vartheta=\tfrac34$. Finally, estimate \eqref{L43} follows from \eqref{L41} and duality.
\end{proof}
\section{Trilinear estimate}
In this section, we handle the trilinear term $\mathcal T(u)$ which is essentially $u^2\partial_x\overline u$. The partial derivative on the third factor causes a factor $\xi_3$ on the Fourier side. If $|\xi_3|$ is significantly higher than the first two frequencies, we can use the following \emph{resonance relation} to control the derivative term: For $(\xi,\tau)\in\mathbb R^2$, $(\xi_1,\xi_2,\xi_3)$, $(\tau_1,\tau_2,\tau_3)\in\mathbb R^3$ such that $\xi_1+\xi_2+\xi_3=\xi$ and $\tau_1+\tau_2+\tau_3=\tau$, we have
\begin{align}
4\max\{|\tau+\xi^2|,&\,|\tau_1+\xi_1^2|,|\tau_2+\xi_2^2|,|\tau_3-\xi_3^2|\}\nonumber\\
\ge&\,|\tau+\xi^2-(\tau_1+\xi_1^2+\tau_2+\xi_2^2+\tau_3-\xi_3^2)|\nonumber\\
=&\,2|\xi_1+\xi_3\|\xi_2+\xi_3|.\label{Resona}
\end{align}
For $|\xi_3|\gg|\xi_1|,|\xi_2|$, we can conclude $\max\{|\tau+\xi^2|,|\tau_1+\xi_1^2|,|\tau_2+\xi_2^2|,|\tau_3-\xi_3^2|\}\gtrsim \xi_3^2$.\\[10pt]
In the sequel, we consider the multipliers of \cite{Herr} with some slight modifications:
\begin{defk}\label{M}
Let $j\in\{1,2,3\},\ \xi\in\mathbb Y,\ \tau\in\mathbb R,\ \xi_j\in\mathbb Y,\ \tau_j\in\mathbb R,\ \vec\xi=(\xi_1,\xi_2,\xi_3)$, $\vec\tau=(\tau_1,\tau_2,\tau_3)$. We set $A(\xi,\tau,\vec\xi,\vec\tau):=\{|\tau+\xi^2|,|\tau_1+\xi_1^2|,|\tau_2+\xi_2^2|,|\tau_3-\xi_3^2|\}$,
\begin{align*}
A_0(\xi,\tau):=&\,\big\{(\vec\xi,\vec\tau)\in\mathbb Y^3_\xi\times\mathbb R^3_\tau:\,\max A(\xi,\tau,\vec\xi,\vec\tau)=|\tau+\xi^2|\big\},\\
A_1(\xi,\tau):=&\,\big\{(\vec\xi,\vec\tau)\in\mathbb Y^3_\xi\times\mathbb R^3_\tau:\,\max A(\xi,\tau,\vec\xi,\vec\tau)=|\tau_1+\xi_1^2|\big\},\\
A_2(\xi,\tau):=&\,\big\{(\vec\xi,\vec\tau)\in\mathbb Y^3_\xi\times\mathbb R^3_\tau:\,\max A(\xi,\tau,\vec\xi,\vec\tau)=|\tau_2+\xi_2^2|\big\},\\
A_3(\xi,\tau):=&\,\big\{(\vec\xi,\vec\tau)\in\mathbb Y^3_\xi\times\mathbb R^3_\tau:\,\max A(\xi,\tau,\vec\xi,\vec\tau)=|\tau_3-\xi_3^2|\big\}
\end{align*}and
\begin{align*}
&M(\xi,\tau,\vec\xi,\vec\tau):= \frac{\langle\xi\rangle^{1/2}i\xi_3}{\langle\tau+\xi^2\rangle^{1/2}\langle\tau_1+\xi_1^2\rangle^{1/2}\langle\tau_2+\xi_2^2\rangle^{1/2}\langle\tau_3-\xi_3^2\rangle^{1/2}\langle\xi_1\rangle^{1/2}\langle\xi_2\rangle^{1/2}\langle\xi_3\rangle^{1/2}},\\
&M_0(\xi,\tau,\vec\xi,\vec\tau):= \frac{\mathbbm1_{A_0(\xi,\tau)}(\vec\xi,\vec\tau)}{\langle\tau_1+\xi_1^2\rangle^{1/2}\langle\tau_2+\xi_2^2\rangle^{1/2}\langle\tau_3-\xi_3^2\rangle^{1/2}\langle\xi_1\rangle^{1/2}\langle\xi_2\rangle^{1/2}},\\
&M_1(\xi,\tau,\vec\xi,\vec\tau):= \frac{\mathbbm1_{A_1(\xi,\tau)}(\vec\xi,\vec\tau)}{\langle\tau+\xi^2\rangle^{1/2}\langle\tau_2+\xi_2^2\rangle^{1/2}\langle\tau_3-\xi_3^2\rangle^{1/2}\langle\xi_1\rangle^{1/2}\langle\xi_2\rangle^{1/2}},\\
&M_2(\xi,\tau,\vec\xi,\vec\tau):= \frac{\mathbbm1_{A_2(\xi,\tau)}(\vec\xi,\vec\tau)}{\langle\tau+\xi^2\rangle^{1/2}\langle\tau_1+\xi_1^2\rangle^{1/2}\langle\tau_3-\xi_3^2\rangle^{1/2}\langle\xi_1\rangle^{1/2}\langle\xi_2\rangle^{1/2}},\\
&M_3(\xi,\tau,\vec\xi,\vec\tau):= \frac{\mathbbm1_{A_3(\xi,\tau)}(\vec\xi,\vec\tau)}{\langle\tau+\xi^2\rangle^{1/2}\langle\tau_1+\xi_1^2\rangle^{1/2}\langle\tau_2+\xi_2^2\rangle^{1/2}\langle\xi_1\rangle^{1/2}\langle\xi_2\rangle^{1/2}},\\
&M_4(\xi,\tau,\vec\xi,\vec\tau):= \frac{1}{\langle\tau+\xi^2\rangle^{7/16}\langle\tau_1+\xi_1^2\rangle^{7/16}\langle\tau_2+\xi_2^2\rangle^{7/16}\langle\tau_3-\xi_3^2\rangle^{7/16}},\\
&\tilde M(\xi,\tau,\vec\xi,\vec\tau):= \frac{M(\xi,\tau,\vec\xi,\vec\tau)}{\langle\tau+\xi^2\rangle^{1/2}},\\
&\tilde M_0(\xi,\tau,\vec\xi,\vec\tau):= \frac{\mathbbm1_{A_0(\xi,\tau)}(\vec\xi,\vec\tau)}{\langle\tau_1+\xi_1^2\rangle^{\frac12+\delta}\langle\tau_2+\xi_2^2\rangle^{\frac12+\delta}\langle\tau_3-\xi_3^2\rangle^{\frac12+\delta}\langle\xi\rangle^{\frac12-3\delta}\langle\xi_1\rangle^{\frac12}\langle\xi_2\rangle^{\frac12}\langle\xi_3\rangle^{\frac12-3\delta}},\\
&\tilde M_j(\xi,\tau,\vec\xi,\vec\tau):= \frac{M_j(\xi,\tau,\vec\xi,\vec\tau)}{\langle\tau+\xi^2\rangle^{1/2}},\ j\in\{1,2,3,4\},
\end{align*}
where we will choose a $\delta\in(0,\tfrac16)$.
\end{defk}
\begin{lemmak}
Let $u_j\in\mathcal S(\mathbb X\times\mathbb R)$ such that $\mathrm{supp}\,u_j\subseteq\mathbb X\times[-T,T]$, $T\in(0,1]$, $j\in\{1,2,3\}$, and $f_j(\xi,\tau):=\langle\tau+\xi^2\rangle^{1/2}\langle\xi\rangle^{1/2}\widehat{u_j}(\xi,\tau)$ for $j\in\{1,2\}$ and $f_3(\xi,\tau):=\langle\tau-\xi^2\rangle^{1/2}\langle\xi\rangle^{1/2}\widehat{u_3}(\xi,\tau)$. We have
\begin{align}
|M|\lesssim \sum_{j=0}^4M_j,\ 
|\tilde M|\lesssim \sum_{j=0}^4\tilde{M}_j\label{tm}
\end{align}
and
\begin{align}
\Big\|\int_{\mathbb R^3_\tau}\int_{\mathbb Y^3_\xi}M_0(\xi,\tau,\vec\xi,\vec\tau)&\,f_{1}(\xi_1,\tau_1)f_{2}(\xi_2,\tau_2)f_{3}(\xi_3,\tau_3)\,\mathrm d\vec\xi\,\mathrm d\vec\tau\Big\|_{L^2_{\xi,\tau}}\nonumber\\
\lesssim&\, \|u_1\|_{X^{\frac38,\frac38}}\|u_2\|_{X^{\frac38,\frac38}}\|u_3\|_{X^{\frac12,\frac12,-}}\label{M0},\\
\Big\|\int_{\mathbb R^3_\tau}\int_{\mathbb Y^3_\xi}M_1(\xi,\tau,\vec\xi,\vec\tau)&\,f_{1}(\xi_1,\tau_1)f_{2}(\xi_2,\tau_2)f_{3}(\xi_3,\tau_3)\,\mathrm d\vec\xi\,\mathrm d\vec\tau\Big\|_{L^2_{\xi,\tau}}\nonumber\\
\lesssim&\, \|u_1\|_{X^{\frac38,\frac12}}\|u_2\|_{X^{\frac38,\frac38}}\|u_3\|_{X^{\frac12,\frac12,-}}
\label{M1},\\
\Big\|\int_{\mathbb R^3_\tau}\int_{\mathbb Y^3_\xi}M_2(\xi,\tau,\vec\xi,\vec\tau)&\,f_{1}(\xi_1,\tau_1)f_{2}(\xi_2,\tau_2)f_{3}(\xi_3,\tau_3)\,\mathrm d\vec\xi\,\mathrm d\vec\tau\Big\|_{L^2_{\xi,\tau}}\nonumber\\
\lesssim&\, \|u_1\|_{X^{\frac38,\frac38}}\|u_2\|_{X^{\frac38,\frac12}}\|u_3\|_{X^{\frac12,\frac12,-}}
\label{M2},\\
\Big\|\int_{\mathbb R^3_\tau}\int_{\mathbb Y^3_\xi}M_3(\xi,\tau,\vec\xi,\vec\tau)&\,f_{1}(\xi_1,\tau_1)f_{2}(\xi_2,\tau_2)f_{3}(\xi_3,\tau_3)\,\mathrm d\vec\xi\,\mathrm d\vec\tau\Big\|_{L^2_{\xi,\tau}}\nonumber\\
\lesssim&\, \|u_1\|_{X^{\frac38,\frac38}}\|u_2\|_{X^{\frac38,\frac38}}\|u_3\|_{X^{\frac12,\frac12,-}}
\label{M3},\allowdisplaybreaks[1]\\
\Big\|\int_{\mathbb R^3_\tau}\int_{\mathbb Y^3_\xi}M_4(\xi,\tau,\vec\xi,\vec\tau)&\,f_{1}(\xi_1,\tau_1)f_{2}(\xi_2,\tau_2)f_{3}(\xi_3,\tau_3)\,\mathrm d\vec\xi\,\mathrm d\vec\tau\Big\|_{L^2_{\xi,\tau}}\nonumber\\
\lesssim&\, \|u_1\|_{X^{\frac12,\frac{15}{32}}}\|u_2\|_{X^{\frac12,\frac{15}{32}}}\|u_3\|_{X^{\frac12,\frac{15}{32},-}}.\label{N}\allowdisplaybreaks[1]
\end{align}
\end{lemmak}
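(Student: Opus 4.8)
The plan is to establish the two pointwise multiplier bounds \eqref{tm} first, and then to deduce each of \eqref{M0}--\eqref{N} by rewriting the multilinear integral, via Plancherel, as a modulation weight times the space-time Fourier transform of a product of three functions, which is then estimated by a Hölder inequality and the Strichartz bounds of the previous lemma.

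For $|M|\lesssim\sum_{j=0}^{4}M_j$ I would argue by cases according to which of the four quantities in $A(\xi,\tau,\vec\xi,\vec\tau)$ is largest (this selects one of the regions $A_0,\dots,A_3$) and, inside each region, according to whether $\langle\xi_3\rangle\gtrsim\langle\xi_1\rangle\vee\langle\xi_2\rangle$ or not. In the first case $\langle\xi\rangle\sim\langle\xi_3\rangle$, and \eqref{Resona} forces the largest of the four modulations to be $\gtrsim\xi_3^2\gtrsim\langle\xi\rangle\langle\xi_3\rangle$; this absorbs the numerator $\langle\xi\rangle^{1/2}|\xi_3|$ of $M$ together with the $\langle\xi_3\rangle^{-1/2}$ already present in $M$, and one lands exactly on $M_j$, where $j$ is the index of the region. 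In the second case $\langle\xi\rangle^{1/2}|\xi_3|\le\langle\xi\rangle^{1/2}\langle\xi_3\rangle$ is controlled by $\langle\xi_1\rangle^{1/2}\langle\xi_2\rangle^{1/2}$ up to lower-order factors, and a short computation reduces $M$ to $M_4$ when $\langle\xi_1\rangle,\langle\xi_2\rangle,\langle\xi_3\rangle$ are comparable, and again to one of $M_0,\dots,M_3$ via \eqref{Resona} otherwise. The second bound $|\tilde M|\lesssim\sum_{j=0}^{4}\tilde M_j$ follows by the same scheme: for $j\in\{1,2,3,4\}$ one simply divides the bound for $M_j$ by $\langle\tau+\xi^2\rangle^{1/2}$, while on $A_0$ one repeats the case analysis directly, the modified shape of $\tilde M_0$ (with its small parameter $\delta$) being precisely what the region $A_0$ demands, since there the largest modulation is the output one and $\langle\tau+\xi^2\rangle\gtrsim\langle\xi\rangle\langle\xi_3\rangle$ in the only subregime carrying a genuine derivative loss.

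For the $L^2$-estimates the key observation is that upon multiplying out $M_j f_1 f_2 f_3$, the weights $\langle\xi_1\rangle^{1/2},\langle\xi_2\rangle^{1/2}$ and all the modulation weights occurring in $M_j$ cancel exactly against the corresponding factors of $f_1,f_2,f_3$; after estimating $\mathbbm1_{A_j}\le1$ (the indicators are needed only for \eqref{tm}, not here), the integral becomes $\langle\tau+\xi^2\rangle^{-b}$ times $\mathcal F[u_1^{\sharp}u_2^{\sharp}u_3^{\sharp}]$, where $u_i^{\sharp}$ is $u_i$ acted on by $J_x^{1/2}$ and/or a fractional power of $\Gamma_\pm$ according to which weight survived, and $b=0$ for $M_0$, $b=\tfrac12$ for $M_1,M_2,M_3$, $b=\tfrac{7}{16}$ for $M_4$. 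By Plancherel the $L^2_{\xi,\tau}$-norm equals $\|u_1^{\sharp}u_2^{\sharp}u_3^{\sharp}\|_{X^{0,-b,+}}$, which for $b>\tfrac38$ is $\lesssim\|u_1^{\sharp}u_2^{\sharp}u_3^{\sharp}\|_{L^{4/3}_{t,x}}$ by \eqref{L43} (and simply $=\|\cdot\|_{L^2_{t,x}}$ for $M_0$), and it remains to split the product by Hölder. For \eqref{M0} one uses $\|u_1\|_{L^8_{t,x}}\|u_2\|_{L^8_{t,x}}\|J_x^{1/2}u_3\|_{L^4_{t,x}}$, bounding the first two factors by \eqref{11} and the last by \eqref{L41} — the $L^4$-Strichartz estimate costs no spatial regularity, which is exactly what makes the derivative factor affordable. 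For \eqref{M1}--\eqref{M2} the surviving weight makes $u_1^{\sharp}=\Gamma_+^{1/2}u_1$ (resp. $\Gamma_+^{1/2}u_2$): it has no temporal regularity left but retains the spatial exponent $\tfrac38$, so I would place it in $L^2_tL^8_x$ via \eqref{11}, the other factor $u_2$ (resp. $u_1$) in $L^8_{t,x}$ and $J_x^{1/2}u_3$ in $L^8_tL^2_x$, with mixed Hölder exponents $(\tfrac12,\tfrac18,\tfrac18)$ in time and $(\tfrac18,\tfrac18,\tfrac12)$ in space, both summing to $\tfrac34$; each factor is controlled by the asserted norm through \eqref{11}. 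For \eqref{M3} the surviving factor is $\Gamma_-^{1/2}J_x^{1/2}u_3\in L^2_{t,x}$ and $u_1,u_2\in L^8_{t,x}$. Finally, for \eqref{N} one has $u_i^{\sharp}=\Gamma_\pm^{1/16}J_x^{1/2}u_i\in L^4_{t,x}$ with norm $\lesssim\|u_i\|_{X^{1/2,15/32}}$ by \eqref{L41} (since $\tfrac1{16}+\tfrac38<\tfrac{15}{32}$), while the output weight $\langle\tau+\xi^2\rangle^{-7/16}$ is absorbed by \eqref{L43}, with $3\cdot\tfrac14=\tfrac34$.

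\textbf{Main obstacle.} The delicate part is the pointwise bound \eqref{tm}: it requires keeping careful track of the relative sizes of $\langle\xi\rangle,\langle\xi_1\rangle,\langle\xi_2\rangle,\langle\xi_3\rangle$ and of the four modulations so that the derivative loss $i\xi_3$ is in every region either absorbed by the resonance identity \eqref{Resona} or manifestly harmless, and the passage to $\tilde M_0$ (which is not literally $M_0\langle\tau+\xi^2\rangle^{-1/2}$) has to be handled separately. The $L^2$-estimates are then essentially bookkeeping; the only point that needs attention is the use of mixed-norm rather than isotropic Hölder inequalities, which is forced by the fact that the extra-modulation factor in $M_1,M_2$ carries spatial but not temporal regularity.
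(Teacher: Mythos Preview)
Your treatment of the five $L^2$-estimates \eqref{M0}--\eqref{N} is correct and essentially matches the paper. The only deviation is in \eqref{M1}--\eqref{M2}: the paper passes through $X^{0,-3/8}\hookrightarrow L^{8/7}_tL^2_x$ via \eqref{12} and then uses the Hölder split $L^2_tL^8_x\times L^8_{t,x}\times L^4_{t,x}$, whereas you go through $X^{0,-1/2}\hookrightarrow L^{4/3}_{t,x}$ via \eqref{L43} and split as $L^2_tL^8_x\times L^8_{t,x}\times L^8_tL^2_x$. Both routes close, so this is harmless.

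The gap is in your case analysis for the pointwise bound \eqref{tm}. Your dichotomy ``$\langle\xi_3\rangle\gtrsim\langle\xi_1\rangle\vee\langle\xi_2\rangle$ versus not'' is the wrong organising variable, and the two claims you make in the first case are both false in general. Take $\xi_1=1$, $\xi_2=N$, $\xi_3=-N$ with $N$ large: then $\langle\xi_3\rangle$ is comparable to the maximum, so you are in your first case, yet $\xi=1$, so neither $\langle\xi\rangle\sim\langle\xi_3\rangle$ nor the resonance lower bound $\gtrsim\xi_3^2$ holds (indeed $|\xi_1+\xi_3||\xi_2+\xi_3|=|N-1|\cdot 0=0$). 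Your fallback ``all $\xi_j$ comparable $\Rightarrow M_4$'' does not cover this configuration either, since $\xi_1$ is small. The bound here has to come from $M_4$ (one checks $\langle\xi\rangle\langle\xi_3\rangle\sim N\lesssim\langle\xi_1\rangle\langle\xi_2\rangle$), but nothing in your scheme selects it.

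The paper's decomposition is instead by the size of $|\xi|$ relative to $|\xi_1|$ and $|\xi_2|$ (four cases: $|\xi|$ larger/smaller than $2|\xi_1|$, and than $2|\xi_2|$). This is the natural variable because the resonance function factors as $2(\xi-\xi_1)(\xi-\xi_2)$: when $|\xi|>2|\xi_1|$ or $|\xi|>2|\xi_2|$ one of the factors is $\sim|\xi|$ and resonance does the work (possibly after writing $\langle\xi\rangle^{1/2}\le1+|\xi|^{1/2}$ and $|\xi_3|^{1/2}\le|\xi-\xi_2|^{1/2}+|\xi_1|^{1/2}$ in the mixed case), while when $|\xi|\le2\min\{|\xi_1|,|\xi_2|\}$ one has $\langle\xi\rangle\langle\xi_3\rangle\lesssim\langle\xi_1\rangle\langle\xi_2\rangle$ directly and lands on $M_4$. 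The same four-case split, repeated, handles $\tilde M_0$; your remark that $\langle\tau+\xi^2\rangle\gtrsim\langle\xi\rangle\langle\xi_3\rangle$ on $A_0$ is the right idea there, but it needs the correct frequency decomposition underneath it.
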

\begin{proof}
In the periodic setting, these statements have already been shown in \cite[Lemma 4.1, Lemma 4.2, Thm. 4.1]{Herr}. In that case, for \eqref{tm}, we only had to consider $\vec\xi\in\mathbb Z^3$. But in the non-periodic setting we need to consider $\vec\xi\in\mathbb R^3$. This means, we have to modify the proof of \cite{Herr} slightly. For the sake of completeness, we will show all these statements simultaneously for the periodic and non-periodic setting.\\[10pt]
Let $(\xi,\tau)\in\mathbb Y\times\mathbb R$, $\vec\xi\in\mathbb Y^3_\xi$ and $\vec\tau\in\mathbb R^3_\tau$. Since $\tau+\xi^2-(\tau_1+\xi_1^2+\tau_2+\xi_2^2+\tau_3-\xi_3^2)=2(\xi-\xi_1)(\xi-\xi_2)$, an application of the triangle inequality shows that
\begin{align}
\langle(\xi-\xi_1)(\xi-\xi_2)\rangle^{1/2}
\le 4\Big(\mathbbm1_{A_0(\xi,\tau)}(\vec\xi,\vec\tau)&\,\left\langle\tau+\xi^2\right\rangle^{1/2}+\mathbbm1_{A_1(\xi,\tau)}(\vec\xi,\vec\tau)\left\langle\tau_1+\xi_1^2\right\rangle^{1/2}\nonumber\\
&\,+\mathbbm1_{A_2(\xi,\tau)}(\vec\xi,\vec\tau)\left\langle\tau_2+\xi_2^2\right\rangle^{1/2}\nonumber\\
&\,+\mathbbm1_{A_3(\xi,\tau)}(\vec\xi,\vec\tau)\left\langle\tau_3-\xi_3^2\right\rangle^{1/2}\Big)\label{A24}.
\end{align}
We consider the following four cases:\\[10pt]
\emph{(i) $|\xi|>2|\xi_1|$ and $|\xi|>2|\xi_2|$}: Here, $|\xi_3|\lesssim |\xi|$ and $\langle(\xi-\xi_1)(\xi-\xi_2)\rangle\gtrsim \left\langle\xi\right\rangle^2$.
Hence, by \eqref{A24},
\begin{align*}
|M(\xi,\tau,\vec\xi,\vec\tau)|\lesssim&\,\sum_{j=0}^3M_j(\xi,\tau,\vec\xi,\vec\tau).
\end{align*}
\emph{(ii) $|\xi|\le2|\xi_1|$ and $|\xi|\le2|\xi_2|$}: In this case, we have $
|\xi_3|\lesssim\max\{|\xi_1|,|\xi_2|\}$ and $|\xi|\le 2\min\{|\xi_1|,|\xi_2|\}$. This means
\begin{align*}
|M(\xi,\tau,\vec\xi,\vec\tau)|\lesssim&\,M_4(\xi,\tau,\vec\xi,\vec\tau).
\end{align*}
\emph{(iii) $|\xi|>2|\xi_1|$ and $\xi\le2|\xi_2|$}: Since $|\xi|\le |\xi-\xi_1|+\frac12|\xi|$, we have $|\xi|\le 2|\xi-\xi_1|$ and therefore $2\langle(\xi-\xi_1)(\xi-\xi_2)\rangle\ge |\xi|\cdot|\xi-\xi_2|$. Furthermore, $
\langle \xi\rangle^{1/2}\le 1+|\xi|^{1/2}$ and $|\xi_3|^{1/2}\le |\xi-\xi_2|^{1/2}+|\xi_1|^{1/2}$. Using $\langle\xi_1\rangle\ge1$ and \eqref{A24} provides
\begin{align*}
|M(\xi,\tau,\vec\xi,\vec\tau)|\lesssim&\, \sum_{j=0}^4M_j(\xi,\tau,\vec\xi,\vec\tau).
\end{align*}
\emph{(iv) $|\xi|\le2|\xi_1|$ and $|\xi|>2|\xi_2|$}: By symmetry, this is a direct consequence of $(iii)$.\\[10pt]
From the first estimate of \eqref{tm}, we obtain 
\begin{align*}
|\tilde M(\xi,\tau,\vec\xi,\vec\tau)|\lesssim&\, \langle\tau+\xi^2\rangle^{-1/2}M_0(\xi,\tau,\vec\xi,\vec\tau)+\sum_{j=1}^4\tilde M_j(\xi,\tau,\vec\xi,\vec\tau).
\end{align*}
This means, the second estimate of \eqref{tm} follows from $\langle\tau+\xi^2\rangle^{-1/2}M_0(\xi,\tau,\vec\xi,\vec\tau)\lesssim \tilde M_0(\xi,\tau,\vec\xi,\vec\tau)$. Therefore, we consider again the four cases from above:\\[10pt]
\emph{(i) $|\xi|>2\xi_1$ and $|\xi|>2|\xi_2|$}: Here, $|\xi_3|\lesssim|\xi|$. For $(\vec\xi,\vec\tau)\in A_0(\xi,\tau)$, resonance relation \eqref{Resona} implies $|\tau+\xi^2|\gtrsim |\xi|\cdot|\xi_3|$. Hence
\begin{align*}
\langle\tau+\xi^2\rangle^{1/2}\gtrsim \langle\tau_1+\xi_1^2\rangle^{\delta}\langle\tau_2+\xi_2^2\rangle^{\delta}\langle\tau_3-\xi_3^2\rangle^{\delta}\langle\xi\rangle^{\frac12-3\delta}\langle\xi_3\rangle^{\frac12-3\delta}
\end{align*}
and consequently $\langle\tau+\xi^2\rangle^{-1/2}M_0(\xi,\tau,\vec\xi,\vec\tau)\lesssim \tilde M_0(\xi,\tau,\vec\xi,\vec\tau)$.\\[10pt]
\emph{(ii) $|\xi|\le2|\xi_1|$ and $|\xi|\le2|\xi_2|$}: In this case, we have $|\xi_3|\lesssim\max\{|\xi_1|,|\xi_2|\}$ and $|\xi|\lesssim 2\min\{|\xi_1|,|\xi_2|\}$ which implies $|\tilde M|\lesssim \tilde M_4$.\\[10pt]
\emph{(iii) $|\xi|>2|\xi_1|$ and $|\xi|\le 2|\xi_2|$}: First, let $|\xi|<1$. Then $|\xi_1|<\tfrac12$ and $|\xi_2+\xi_3|<\tfrac32$. This means $|\xi_3|< \tfrac32+|\xi_2|$ and $\langle\xi_3\rangle\lesssim\langle\xi_2\rangle$. Since $|\xi|<1,|\xi_1|<\tfrac12$, we obtain
\begin{align*}
|\tilde M(\xi,\tau,\vec\xi,\vec\tau)|\lesssim \tilde M_4(\xi,\tau,\vec\xi,\vec\tau).
\end{align*}
Secondly, let $|\xi_3|<1$. Then $\langle\xi_3\rangle\sim 1$ and from $|\xi|\le2|\xi_2|$, $\langle\xi_1\rangle\ge1$, we obtain
\begin{align*}
|\tilde M(\xi,\tau,\vec\xi,\vec\tau)|\lesssim&\, \tilde M_4(\xi,\tau,\vec\xi,\vec\tau).
\end{align*}
Thirdly, assume that $|\xi_1|>|\xi-\xi_2|$. Then $|\xi_3|\le |\xi-\xi_2|+|\xi_1|<2|\xi_1|$ and $|\xi|\le 2|\xi_2|$ which implies
\begin{align*}
|\tilde M(\xi,\tau,\vec\xi,\vec\tau)|\lesssim \tilde M_4(\xi,\tau,\vec\xi,\vec\tau).
\end{align*}
Finally, we have to consider the case $|\xi|,|\xi_3|\ge1$ and $|\xi_1|\le|\xi-\xi_2|$. Here, $\langle\xi\rangle\sim|\xi|$,\,$\langle\xi_3\rangle\sim|\xi_3|$ and the triangle inequality provides
\begin{align*}
|\xi_3|\le |\xi-\xi_2|+|\xi_1|\le 2|\xi-\xi_2|.
\end{align*}
Furthermore, we have
\begin{align*}
|\xi|\le&\, 2|\xi-\xi_1|,\ 
|\xi|\cdot|\xi-\xi_2|\le 2\langle(\xi-\xi_1)(\xi-\xi_2)\rangle
\end{align*}
and as a consequence $
\langle\xi\rangle\langle\xi_3\rangle\lesssim\langle(\xi-\xi_1)(\xi-\xi_2)\rangle$. 
For $(\vec\xi,\vec\tau)\in A_0(\xi,\tau)$, resonance relation \eqref{Resona} implies
\begin{align*}
\langle\tau+\xi^2\rangle^{1/2}
\gtrsim&\,\langle(\xi-\xi_1)(\xi-\xi_2)\rangle^{\frac12-3\delta}\langle\tau_1+\xi_1^2\rangle^{\delta}\langle\tau_2+\xi_2^2\rangle^{\delta}\langle\tau_3-\xi_3^2\rangle^{\delta}\\
\gtrsim&\, \langle\xi\rangle^{\frac12-3\delta}\langle\xi_3\rangle^{\frac12-3\delta}\langle\tau_1+\xi_1^2\rangle^{\delta}\langle\tau_2+\xi_2^2\rangle^{\delta}\langle\tau_3-\xi_3^2\rangle^{\delta},
\end{align*}
so that $\langle\tau+\xi^2\rangle^{-1/2}M_0(\xi,\tau,\vec\xi,\vec\tau)\lesssim \tilde M_0(\xi,\tau,\vec\xi,\vec\tau)$.\\[10pt]
\emph{(iv) $|\xi|\le2|\xi_1|$ and $|\xi|>2|\xi_2|$}: This is again a consequence of case $(iii)$.\\[10pt]
Now, we prove \eqref{M0}-\eqref{N}: By definition of $M_0$, Hölder's inequality and estimates \eqref{11},\,\eqref{L41}, we obtain
\begin{align*}
\Big\|\int_{\mathbb R^3_\tau}\int_{\mathbb Y^3_\xi}M_0(\xi,\tau,\vec\xi,\vec\tau)\prod_{j=1}^3f_j(\xi_j,\tau_j)\,\mathrm d\vec\xi\,\mathrm d\vec\tau\Big\|_{L^2_{\xi,\tau}}\le&\, \|u_1\cdot u_2\cdot J_x^{1/2}u_3\|_{L^2_{t,x}}\\
\lesssim&\, \|u_1\|_{X^{\frac38,\frac38}}\|u_2\|_{X^{\frac38,\frac38}}\|u_3\|_{X^{\frac12,\frac12,-}}.
\end{align*}
For $M_1$, we denote
\begin{align*}
\Big\|\int_{\mathbb R^3_\tau}\int_{\mathbb Y^3_\xi}M_1(\xi,\tau,\vec\xi,\vec\tau)\prod_{j=1}^3f_j(\xi_j,\tau_j)\,\mathrm d\vec\xi\,\mathrm d\vec\tau\Big\|_{L^2_{\xi,\tau}}\le \|\Gamma^{1/2}u_1\cdot u_2\cdot J_x^{1/2}u_3\|_{X^{0,-\frac12}}.
\end{align*}
Applying \eqref{12}, Hölder's inequality, \eqref{11} and \eqref{L41} provides
\begin{align}
\|\Gamma^{1/2}u_1\cdot u_2\cdot J_x^{1/2}u_3\|_{X^{0,-\frac38}}\lesssim&\, \|\Gamma^{1/2}u_1\cdot u_2\cdot J_x^{1/2}u_3\|_{L^{8/7}_tL^2_x}\nonumber\\
\le&\, \|\Gamma^{1/2}u_1\|_{L^2_tL^8_x}\|u_2\|_{L^8_{t,x}}\|J_x^{1/2}u_3\|_{L^4_{t,x}}\nonumber\\
\lesssim&\, \|u_1\|_{X^{\frac38,\frac12}}\|u_2\|_{X^{\frac38,\frac38}}\|u_3\|_{X^{\frac12,\frac12,-}}.\label{m11}
\end{align}
Since $\|\cdot\|_{X^{0,-\frac12}}\le\|\cdot\|_{X^{0,-\frac38}}$, we obtain the estimate for $M_1$.\\[10pt]
Because of symmetry, the estimate for $M_2$ can be shown analogously.\\[10pt]
By definition of $M_3$,
\begin{align*}
\Big\|\int_{\mathbb R^3_\tau}\int_{\mathbb Y^3_\xi}M_3(\xi,\tau,\vec\xi,\vec\tau)\prod_{j=1}^3f_{j}(\xi_j,\tau_j)\,\mathrm d\vec\xi\,\mathrm d\vec\tau\Big\|_{L^2_{\xi,\tau}}\le \|u_1\cdot u_2\cdot J_x^{1/2}\Gamma_-^{1/2}u_3\|_{X^{0,-\frac12}}.
\end{align*}
Using \eqref{L43}, Hölder's inequality and \eqref{11} provides
\begin{align}
\|u_1\cdot u_2\cdot J_x^{1/2}\Gamma_-^{1/2}u_3\|_{X^{0,-\frac{7}{16}}}\lesssim&\, \|u_1\cdot u_2\cdot J_x^{1/2}\Gamma_-^{1/2}u_3\|_{L^{4/3}_{t,x}}\nonumber\\
\lesssim&\, \|u_1\|_{X^{\frac38,\frac38}}\|u_2\|_{X^{\frac38,\frac38}}\|u_3\|_{X^{\frac12,\frac12,-}}\label{m33}.
\end{align}
Since $\|\cdot\|_{X^{0,-\frac12}}\le\|\cdot\|_{X^{0,-\frac7{16}}}$, we obtain the conclusion for $M_3$.\\[10pt]
For $M_4$, we have
\begin{align*}
\Big\|\int_{\mathbb R^3_\tau}\int_{\mathbb Y^3_\xi}M_4(\xi,\tau,\vec\xi,\vec\tau)&\prod_{j=1}^3f_{j}(\xi_j,\tau_j)\,\mathrm d\vec\xi\,\mathrm d\vec\tau\Big\|_{L^2_{\xi,\tau}}\\
\le&\, \big\|J_x^{1/2}\Gamma^{1/16}u_1\cdot J_x^{1/2}\Gamma^{1/16}u_2\cdot J_x^{1/2}\Gamma_-^{1/16}u_3\big\|_{X^{0,-\frac{7}{16}}}.
\end{align*}
The dual Strichartz estimate \eqref{L43}, Hölder's inequality and Strichartz estimate \eqref{L41} imply
\begin{align}
\big\|J_x^{1/2}\Gamma^{1/16}u_1\cdot J_x^{1/2}\,&\Gamma^{1/16}u_2\cdot J_x^{1/2}\Gamma_-^{1/16}u_3\big\|_{X^{0,-\frac{7}{16}}}\nonumber\\
\lesssim&\, \big\|J_x^{1/2}\Gamma^{1/16}u_1\cdot J_x^{1/2}\Gamma^{1/16}u_2\cdot J_x^{1/2}\Gamma_-^{1/16}u_3\big\|_{L^{4/3}_{t,x}}\nonumber\\
\lesssim&\,\|u_1\|_{X^{\frac12,\frac{15}{32}}}\|u_2\|_{X^{\frac12,\frac{15}{32}}}\|u_3\|_{X^{\frac12,\frac{15}{32},-}}\label{m44}
\end{align}
which proves the estimate for $M_4$.
\end{proof}
\begin{bemk}Similar estimates for $\tilde M_j$ will be shown in the proof of theorem \ref{Tri}.\end{bemk}
To prove the trilinear estimate, we follow the ideas of \cite[Thm. 4.1 and Thm. 4.2]{Herr} for the $H^s$-case. In order to localize frequencies, we write $u_j=\sum_{N_j\in\mathcal D_1}P_{N_j}u_j$. We need to eliminate the sums over $N_j\in\mathcal D_1$. Therefore, the following elementary estimates are useful:
\begin{lemmak}
Let $u\in\mathcal S(\mathbb X\times\mathbb R)$, $\delta>0$ and $s,b\in\mathbb R$. Then
\begin{align}
\sum_{N\in\mathcal D_1}N^{-\delta}\|P_Nu\|_{X^{s,b,\pm}}\lesssim_\delta \|u\|_{\mathfrak X^{s,b,\pm}},\label{X}\\
\sum_{N\in\mathcal D_1}\|P_Nu\|_{X^{s,b}}\lesssim_{\delta} \|u\|_{\mathfrak X^{s+\delta,b}}.\label{Y}
\end{align}
For $N\in\mathcal D_1$, we have
\begin{align}
\sum_{\mathcal D_1\ni N_1\sim N}\|P_{N_1}u\|_{X^{s,b,\pm}}\lesssim \|u\|_{\mathfrak X^{s,b,\pm}}\label{XX}
\end{align}
with an implicit constant which does not depend on $N$.\\[10pt]
For $k\ge1$, we have
\begin{align}
\sum_{\mathcal D_1\ni N\le k}\|P_Nu\|_{X^{s,b,\pm}}\lesssim_k\|u\|_{\mathfrak X^{s,b,\pm}}\label{XXX}.
\end{align}
\end{lemmak}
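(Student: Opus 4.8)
The plan is to derive all four inequalities from the single elementary fact that, by the very definition of the norms in Definition \ref{D1}, one has $\|P_N u\|_{X^{s,b,\pm}}\le\|u\|_{\mathfrak X^{s,b,\pm}}$ for every $N\in\mathcal D_1$ — for $N=1$ this is literally one of the two terms of the norm, and for $N>1$ it is dominated by the supremum — and likewise $\|P_N u\|_{X^{s,b}}\le\|u\|_{\mathfrak X^{s,b}}$. Combining this bound with a geometric summation over $\mathcal D_1=\{2^n:n\ge0\}$ and, where needed, a gain of a negative power of $N$ coming from the frequency support of $P_N u$, yields everything.

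First, for \eqref{X} I would bound each summand by $\|u\|_{\mathfrak X^{s,b,\pm}}$ and sum the resulting geometric series $\sum_{n\ge0}2^{-n\delta}=(1-2^{-\delta})^{-1}$, which produces exactly the claimed $\delta$-dependent constant. Next, for \eqref{Y} the key extra observation is that $\widehat{P_N u}$ is supported where $\langle\xi\rangle\sim N$ (namely $|\xi|\le2$ if $N=1$ and $N/2<|\xi|<2N$ if $N>1$), so that $\langle\xi\rangle^s\lesssim N^{-\delta}\langle\xi\rangle^{s+\delta}$ on that set and hence $\|P_N u\|_{X^{s,b}}\lesssim N^{-\delta}\|P_N u\|_{X^{s+\delta,b}}$; summing this over $N$ and invoking \eqref{X} with $s$ replaced by $s+\delta$ gives \eqref{Y}.

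For \eqref{XX} and \eqref{XXX} the point is merely that the index sets are finite with a controlled number of elements: $\{N_1\in\mathcal D_1:N_1\sim N\}$ is contained in $\{N_1\in\mathcal D_1:C^{-1}N\le N_1\le CN\}$, which consists of at most $1+2\log_2 C$ dyadic numbers, a bound \emph{independent of} $N$, while $\{N\in\mathcal D_1:N\le k\}$ has at most $1+\log_2 k$ elements. Estimating each summand by $\|u\|_{\mathfrak X^{s,b,\pm}}$ then gives \eqref{XX} with an $N$-independent constant and \eqref{XXX} with a $k$-dependent constant. I do not anticipate any genuine difficulty here; the only steps that require a moment's care are the $N^{-\delta}$ gain used for \eqref{Y} and the verification that the dyadic index sets in \eqref{XX} and \eqref{XXX} have cardinality bounded uniformly in $N$, respectively by a function of $k$.
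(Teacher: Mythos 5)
Your proposal is correct and follows essentially the same route as the paper: bound each dyadic piece by the $\mathfrak X$-norm, sum the geometric series for \eqref{X}, gain $N^{-\delta}$ from the frequency localization to reduce \eqref{Y} to \eqref{X}, and use the uniformly (resp.\ $k$-dependently) bounded cardinality of the index sets for \eqref{XX} and \eqref{XXX}. The paper's proof is terser but rests on exactly these observations.
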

\begin{proof}
Estimate \eqref{X} is a direct consequence of the convergence of the geometric series $\sum_{N\in\mathcal D_1}N^{-\delta}$. Estimate \eqref{Y} follows from \eqref{X}.\\[10pt]
Let $C>1$ the implicit constant corresponding to $\sim$. There are $2\lfloor \log_2C\rfloor+1$ dyadic $N_1$ with $N_1\sim N$ which implies \eqref{XX}.\\[10pt]
Finally, for $k\ge1$, there are $\lfloor\log_2k\rfloor$ dyadic $N$ such that $1<N\le k$ and we obtain \eqref{XXX}.
\end{proof}
\begin{bem*}
One can show the same estimates for $Y^{s,b}$- and $\mathcal Y^{s,b}$-norms. But because of \eqref{XY}, we need these statements only for $X^{s,b}$ and $\mathfrak X^{s,b}$.
\end{bem*}
\begin{satzl}[Trilinear estimate for \boldmath{$\mathfrak X^{\frac12,-\frac12}$}\unboldmath{}]\label{L6}
Let $T\in(0,1]$, $u_j\in\mathcal S(\mathbb X\times\mathbb R)$ with $\mathrm{supp}\,u_j\subseteq\mathbb X\times(-T,T)$, $j\in\{1,2,3\}$. There exists an $\varepsilon>0$ such that
\begin{align}\label{TX}
\|\mathcal T(u_1,u_2,u_3)\|_{\mathfrak X^{\frac12,-\frac12}}\lesssim T^\varepsilon\|u_1\|_{\mathfrak X^{\frac12,\frac12}}\|u_2\|_{\mathfrak X^{\frac12,\frac12}}\|u_3\|_{\mathfrak X^{\frac12,\frac12,-}}.
\end{align}
\end{satzl}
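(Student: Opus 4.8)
The plan is to decompose each $u_j$ into its Littlewood–Paley pieces $u_j=\sum_{N_j\in\mathcal D_1}P_{N_j}u_j$ and to estimate, for every output frequency scale $N\in\mathcal D_1$, the norm $\|P_N\mathcal T(P_{N_1}u_1,P_{N_2}u_2,P_{N_3}u_3)\|_{X^{\frac12,-\frac12}}$ with a gain that is summable over $N_1,N_2,N_3$. Writing $\widehat{\mathcal T}$ in terms of the multiplier $M$ from Definition \ref{M} (after pulling out $\langle\xi\rangle^{-1/2}$ from the output weight and the weights $\langle\tau_j\pm\xi_j^2\rangle^{1/2}\langle\xi_j\rangle^{1/2}$ that define the $f_j$), the problem reduces to bounding the multilinear form with kernel $M$, which by \eqref{tm} is dominated by $\sum_{j=0}^4 M_j$. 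For the pieces $M_0,\dots,M_3$ we invoke directly the estimates \eqref{M0}–\eqref{M3}, and for $M_4$ we use \eqref{N}; in each case the right-hand sides are products of $X^{\sigma,b}$-norms with $\sigma\le\frac12$ and $b\le\frac12$, so after frequency localization they are controlled by $N_j^{-\delta}\|P_{N_j}u_j\|_{X^{\frac12,\frac12,\pm}}$ type quantities once we exploit the slack between the exponents appearing in \eqref{M0}–\eqref{N} (e.g. $\frac38<\frac12$, $\frac{15}{32}<\frac12$) — this slack is precisely what produces a power $N_j^{-\delta}$ per factor whenever the frequency $N_j$ is $\gtrsim 1$, hence after summing via \eqref{X}, \eqref{XX} and \eqref{XXX} one recovers the $\mathfrak X$-norms on the right of \eqref{TX}.

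Before that reduction can be carried out cleanly one has to organize the frequency interactions. The plan is to split into the standard regimes according to the relative sizes of $N_1,N_2,N_3$ and $N$: (a) the \emph{high$\times$high$\to$low or balanced} regime where two of the $N_j$ are comparable and dominate, and (b) the regime where one frequency — necessarily compatible with $N$ up to constants — dominates. In regime (b) with $N_3$ (the differentiated factor) the largest, the derivative loss $\xi_3$ is the dangerous term, but this is exactly where the resonance relation \eqref{Resona} applies: $|\xi_3|\gg|\xi_1|,|\xi_2|$ forces $\max A\gtrsim\xi_3^2$, so the factor $\langle\xi_3\rangle$ is absorbed by a half-power of whichever modulation weight is largest, which is how the multipliers $M_0,\dots,M_3$ (each missing one modulation weight, compensated by the indicator $\mathbbm 1_{A_j}$) are built; when no frequency dominates, i.e. $|\xi|\lesssim|\xi_1|\sim|\xi_2|$, there is no derivative loss to fear since $|\xi_3|\lesssim\max\{|\xi_1|,|\xi_2|\}$ and $M_4$ handles it. The case analysis in the preceding Lemma (the four cases $(i)$–$(iv)$ on the sizes of $|\xi|$ versus $2|\xi_1|,2|\xi_2|$) already packages all of this, so the bulk of the present proof is bookkeeping on the dyadic sums rather than new harmonic analysis.

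Finally one inserts the time-support hypothesis $\mathrm{supp}\,u_j\subseteq\mathbb X\times(-T,T)$ to extract the factor $T^\varepsilon$. The idea is that the estimates \eqref{M0}–\eqref{N} produce norms like $\|u_j\|_{X^{\frac38,\frac38}}$ and $\|u_j\|_{X^{\frac12,\frac{15}{32}}}$ with time-regularity exponent strictly below $\frac12$; since each $u_j$ is supported in a time interval of length $2T\le 2$, one may write $u_j=\chi_T(t)u_j+\chi_{\le T/2}(t)u_j$ — more simply, multiply by a cutoff adapted to $(-T,T)$ — and apply the third displayed estimate of Lemma (the one reading $\|P_N(\chi_T(t)u)\|_{X^{s,b_1,\pm}}\lesssim T^{b_2-b_1}\|P_Nu\|_{X^{s,b_2,\pm}}$) with $b_1=\frac38$ or $\frac{15}{32}$ and $b_2$ slightly below $\frac12$, gaining $T^{b_2-b_1}>0$ from each low-modulation factor; collecting these powers gives $T^\varepsilon$ for some $\varepsilon>0$ while the lost $T^{-\delta}$ from any $b=\frac12$ factor (the $M_1,M_2$ case, controlled via \eqref{m11}) is taken at $\delta<\varepsilon$ so the net power of $T$ stays positive.

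\textbf{Main obstacle.} The genuinely delicate point is not any single estimate but ensuring that the dyadic summation closes: one must verify that in every frequency regime each of the three factors can be given a strictly negative power of its own dyadic frequency (or that two comparable large frequencies jointly absorb a positive power), so that \eqref{X}, \eqref{XX}, \eqref{XXX} apply with a fixed $\delta>0$ — the tightest case being the resonant high$\times$high$\to$low interaction with $s=\frac12$ exactly, where the room is created solely by the $\frac12-\frac38=\frac18$ (resp.\ $\frac12-\frac{15}{32}=\frac1{32}$) slack in the spatial/temporal exponents of \eqref{M0}–\eqref{N} together with the resonance gain \eqref{Resona}, and one must check these two sources of slack never both degenerate simultaneously.
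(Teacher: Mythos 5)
Your plan reproduces the paper's overall skeleton: dyadic decomposition, reduction to the pointwise multiplier bound $|M|\lesssim\sum_{j=0}^4 M_j$, the estimates \eqref{M0}--\eqref{N}, the resonance relation \eqref{Resona} to absorb the derivative factor, and the extraction of $T^\varepsilon$ from modulation exponents strictly below $\tfrac12$ via the cutoff lemma. That is all correct and is exactly how the paper begins.

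But there is a genuine gap, and it is precisely the point you flag at the end as a risk: the two sources of slack \emph{do} degenerate simultaneously in a specific (non-periodic) near-resonant regime, and your scheme as written does not close there. Concretely, consider the case $N\lesssim N_1\ll N_2\sim N_3$ (the paper's Case $IIb_1$): here one ends up with $|M|\lesssim\langle\tau+\xi^2\rangle^{-1/2}\langle\tau_1+\xi_1^2\rangle^{-1/2}\langle\tau_2+\xi_2^2\rangle^{-1/2}\langle\tau_3-\xi_3^2\rangle^{-1/2}$, which one would like to compare to $M_4$. The only mechanism available from \eqref{N} is $X^{1/2,15/32}$ on each factor, which gives a power of $T$ but \emph{no} power of $N_j$, because the spatial exponent in \eqref{N} is exactly $\tfrac12$. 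To convert $|M|\lesssim M_4$ into something summable, you need $\max A\gtrsim N_3^\gamma$ for some $\gamma>0$, which by \eqref{Resona} requires $|\xi-\xi_1|$ to be bounded below; the paper splits on $|\xi-\xi_1|\gtrless N_3^{-1/2}$. When $|\xi-\xi_1|<N_3^{-1/2}$, the resonance gain is gone and the $M_4$ route simply cannot produce the $N_1^{-\delta}N_3^{-\delta}$ factors needed to run \eqref{X}/\eqref{XX}. In the periodic case this region is empty (because $\xi,\xi_1\in\mathbb Z$ and $\xi\neq\xi_1$ on $\Omega'_\xi$), but in the non-periodic case the paper handles it by an entirely separate argument: partition frequency space into length-$N_3^{-1/2}$ intervals $I_k$, observe that $\xi_2\in I_k$ forces $\xi_3$ into one of three nearby intervals $I_{-(k+l)}$, apply Bernstein on the short interval together with the time support to get $\|P_{I_k}(\cdot)\|_{L^4_{t,x}}\lesssim T^{1/4}N_3^{-1/8}\|P_{I_k}(\cdot)\|_{X^{0,9/16}}$, and then sum via almost orthogonality in $k$. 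The symmetric case $N_2\gg N_1\sim N_3$ with $|\xi-\xi_2|<N_2^{-1/2}$ (Case $IIIb$) needs the same treatment. Your proposal contains none of this interval decomposition / Bernstein / almost-orthogonality machinery, so the dyadic sums do not close in these regimes. A secondary omission: in the periodic setting the operator $\mathcal T$ carries the extra diagonal contribution from $\Omega''_\xi$ ($\xi_1=\xi_2=\xi$, $\xi_3=-\xi$), which you never isolate; it is easy (all dyadic indices are forced comparable to $N$), but it must be mentioned.
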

\begin{proof}
By applying the triangle inequality, we may assume $P_{N_j}u_j\ge0$ and $P_{N_j}u_j=\chi_T(t)P_{N_j}u_j$. Take $\varepsilon\in(0,\tfrac1{32})$. We need to show the estimates
\begin{align*}
\|P_{1}\mathcal T(u_1,u_2,u_3)\|_{X^{\frac12,-\frac12}}\lesssim&\, T^\varepsilon\|u_1\|_{\mathfrak X^{\frac12,\frac12}}\|u_2\|_{\mathfrak X^{\frac12,\frac12}}\|u_3\|_{\mathfrak X^{\frac12,\frac12,-}},\\
\sup_{N>1}\|P_N\mathcal T(u_1,u_2,u_3)\|_{X^{\frac12,-\frac12}}\lesssim&\, T^\varepsilon\|u_1\|_{\mathfrak X^{\frac12,\frac12}}\|u_2\|_{\mathfrak X^{\frac12,\frac12}}\|u_3\|_{\mathfrak X^{\frac12,\frac12,-}}.
\end{align*}
Let
\begin{align*}
f_{N_j,u_j}(\xi,\tau):=&\, \langle\tau+\xi^2\rangle^{1/2}\langle\xi\rangle^{1/2}\widehat{P_{N_j}u_j}(\xi,\tau),\ j\in\{1,2\},\\
f_{N_3,u_3}(\xi,\tau):=&\, \langle\tau-\xi^2\rangle^{1/2}\langle\xi\rangle^{1/2}\widehat{P_{N_3}u_3}(\xi,\tau).
\end{align*}
We define
\begin{align*}
\Omega_\xi:=\begin{cases}\Omega'_\xi:=\mathbb R^3_\xi\ &\,\text{if }\mathbb X=\mathbb R,\\
\Omega'_\xi\cup\Omega''_{\xi}\ &\,\text{if }\mathbb X=\mathbb T,
\end{cases}
\end{align*}
where
\begin{align*}
\Omega'_\xi:=&\, \{\vec\xi\in\mathbb Z^3_\xi:\ \xi_1,\xi_2\ne \xi\},\ 
\Omega''_\xi:= \{\vec\xi\in\mathbb Z^3_\xi:\ \xi_1=\xi_2=\xi,\,\xi_3=-\xi\}
\end{align*}
for $\mathbb X=\mathbb T$ with integration with respect to the counting measure. Then
\begin{align*}
\|P_{1}\mathcal T(u_1,&\,u_2,u_3)\|_{X^{\frac12,-\frac12}}\\
=&\, \Big\|\chi_{\le1}(\xi)\sum_{N_1,N_2,N_3\in\mathcal D_1}\int_{\mathbb R^3_\tau}\int_{\Omega_\xi}M(\xi,\tau,\vec\xi,\vec\tau)\prod_{j=1}^3f_{N_j,u_j}(\xi_j,\tau_j)\,\mathrm d\vec\xi\,\mathrm d\vec\tau\Big\|_{L^2_{\xi,\tau}}
\end{align*}
and the same for $P_N\mathcal T(u_1,u_2,u_3)$ with dyadic $N>1$ by replacing $\chi_{\le1}$ with $\chi_N$.\\[10pt]
On $\Omega''_\xi$, we will only get a positive term if $N_1,N_2,N_3\lesssim 1$ and $N_1,N_2,N_3\sim N$ respectively. For $N>1$, we use estimates \eqref{tm}, \eqref{M0}-\eqref{N} and \eqref{XX} to conclude
\begin{align*}
\Big\|\chi_N(\xi)\,\sum_{N_1,N_2,N_3\in\mathcal D_1}&\int_{\mathbb R^3_\tau}\int_{\Omega''_\xi}M(\xi,\tau,\vec\xi,\vec\tau)\prod_{j=1}^3f_{N_j,u_j}(\xi_j,\tau_j)\,\mathrm d\vec\xi\,\mathrm d\vec\tau\Big\|_{L^2_{\xi,\tau}}\nonumber\\
\lesssim&\, T^\varepsilon \sum_{N_1,N_2,N_3\sim N}\|P_{N_1}u_1\|_{X^{\frac12,\frac12}}\|P_{N_2}u_2\|_{X^{\frac12,\frac12}}\|P_{N_3}u_3\|_{X^{\frac12,\frac12,-}}\nonumber\\
\lesssim&\, T^\varepsilon \|u_1\|_{\mathfrak X^{\frac12,\frac12}}\|u_2\|_{\mathfrak X^{\frac12,\frac12}}\|u_3\|_{\mathfrak X^{\frac12,\frac12,-}}
\end{align*}
with a constant which does not depend on $N$. For the small frequencies, we obtain from estimates \eqref{tm}, \eqref{M0}-\eqref{N} and \eqref{XXX} that
\begin{align*}
\Big\|\chi_{\le1}(\xi)\,\sum_{N_1,N_2,N_3\in\mathcal D_1}&\int_{\mathbb R^3_\tau}\int_{\Omega''_\xi}M(\xi,\tau,\vec\xi,\vec\tau)\prod_{j=1}^3f_{N_j,u_j}(\xi_j,\tau_j)\,\mathrm d\vec\xi\,\mathrm d\vec\tau\Big\|_{L^2_{\xi,\tau}}\nonumber\\
\lesssim&\, T^\varepsilon\sum_{N_1,N_2,N_3\lesssim1}\|P_{N_1}u_1\|_{X^{\frac12,\frac12}}\|P_{N_2}u_2\|_{X^{\frac12,\frac12}}\|P_{N_3}u_3\|_{\frac12,\frac12,-}\nonumber\\
\lesssim&\, T^\varepsilon\|u_1\|_{\mathfrak X^{\frac12,\frac12}}\|u_2\|_{\mathfrak X^{\frac12,\frac12}}\|u_3\|_{\mathfrak X^{\frac12,\frac12,-}}.
\end{align*}
We still need to consider the set $\Omega'_\xi$. The conclusion follows from
\begin{align}
\Big\|\chi_{\le1}(\xi)\sum_{N_1,N_2,N_3}\int_{\mathbb R^3_\tau}&\int_{\Omega'_\xi}M(\xi,\tau,\vec\xi,\vec\tau)\prod_{j=1}^3f_{N_j,u_j}(\xi_j,\tau_j)\,\mathrm d\vec\xi\,\mathrm d\vec\tau\Big\|_{L^2_{\xi,\tau}}\nonumber\\
\lesssim&\, T^\varepsilon\|u_1\|_{\mathfrak X^{\frac12,\frac12}}\|u_2\|_{\mathfrak X^{\frac12,\frac12}}\|u_3\|_{\mathfrak X^{\frac12,\frac12,-}},\label{AAA}\\
\sup_{N>1}\Big\|\chi_N(\xi)\sum_{N_1,N_2,N_3}\int_{\mathbb R^3_\tau}&\int_{\Omega'_\xi}M(\xi,\tau,\vec\xi,\vec\tau)\prod_{j=1}^3f_{N_j,u_j}(\xi_j,\tau_j)\,\mathrm d\vec\xi\,\mathrm d\vec\tau\Big\|_{L^2_{\xi,\tau}}\nonumber\\
\lesssim&\, T^\varepsilon\|u_1\|_{\mathfrak X^{\frac12,\frac12}}\|u_2\|_{\mathfrak X^{\frac12,\frac12}}\|u_3\|_{\mathfrak X^{\frac12,\frac12,-}}\label{BBB}.\end{align}
Estimate \eqref{AAA} can be shown by similar arguments as \eqref{BBB} just by replacing $\chi_N$ with $\chi_{\le1}$ and "$\sim N$" with "$\lesssim 1$". Hence we will only prove \eqref{BBB}.\\[10pt]
Let $N>1$ be dyadic. By symmetry, we may assume that $N_1\le N_2$. We distinguish between the cases $N_3\gg N_2$, $N_3\sim N_2$ and $N_3\ll N_2$ (taking an implicit constant greater than $8$). We write
\begin{align*}
\Big\|\sum_{N_1,N_2,N_3}\chi_N(\xi)\int_{\mathbb R^3_\tau}\int_{\Omega'_\xi}M(\xi,\tau,\vec\xi,\vec\tau)\prod_{j=1}^3f_{N_j,u_j}(\xi_j,\tau_j)\,\mathrm d\vec\xi\,\mathrm d\vec\tau\Big\|_{L^2_{\xi,\tau}}\lesssim I+II+III
\end{align*}
taking the sum over $\|\chi_N(\xi)\int_{\mathbb R^3_\tau}\int_{\Omega'_\xi}M(\xi,\tau,\vec\xi,\vec\tau)\prod_{j=1}^3f_{N_j,u_j}(\xi_j,\tau_j)\,\mathrm d\vec\xi\,\mathrm d\vec\tau\|_{L^2_{\xi,\tau}}$ with the restrictions
\begin{align*}
I:\ N_1\le N_2\ll N_3,\ II:\ N_1\le N_2\sim N_3,\ 
III:\ N_1\le N_2,\,N_2\gg N_3.
\end{align*}
\emph{Case I: $N_1\le N_2\ll N_3$. }In order to obtain a positive contribution, we may assume that $N_3\sim N$. Then resonance relation \eqref{Resona} provides
\begin{align*}
\max\{|\tau+\xi^2|,&\,|\tau_1+\xi_1^2|,|\tau_2+\xi_2^2|,|\tau_3-\xi_3^2|\}\gtrsim NN_3.
\end{align*}
Since $N,N_3\ge1$, we have $NN_3\sim \langle\xi\rangle\langle\xi_3\rangle$ which implies
\begin{align*}
|M(\xi,\tau,\vec\xi,\vec\tau)|\le&\, \frac{\langle\xi\rangle^{1/2}\langle\xi_3\rangle^{1/2}}{\langle\tau+\xi^2\rangle^{1/2}\langle\tau_1+\xi_1^2\rangle^{1/2}\langle\tau_2+\xi_2^2\rangle^{1/2}\langle\tau_3-\xi_3^2\rangle^{1/2}\langle\xi_1\rangle^{1/2}\langle\xi_2\rangle^{1/2}}\\
\lesssim&\, \sum_{j=0}^3M_j(\xi,\tau,\vec\xi,\vec\tau).
\end{align*}
By \eqref{M0}-\eqref{M3}, we obtain
\begin{align*}
I\lesssim&\, T^\varepsilon\sum_{N_1,N_2}\sum_{N_3\sim N}\|P_{N_1}u_1\|_{X^{\frac38,\frac12}}\|P_{N_2}u_2\|_{X^{\frac38,\frac12}}\|P_{N_3}u_3\|_{X^{\frac12,\frac12,-}}\\
\lesssim&\, T^\varepsilon\|u_1\|_{\mathfrak X^{\frac12,\frac12}}\|u_2\|_{\mathfrak X^{\frac12,\frac12}}\|u_3\|_{\mathfrak X^{\frac12,\frac12,-}}.
\end{align*}
\emph{Case II: $N_1\le N_2\sim N_3$. }We distinguish between 
\begin{align*}
IIa:\ &\,N_1\sim N_2,\ IIb:\ N_1\ll N_2.
\end{align*}
\emph{Case IIa: $N_1\sim N_2\sim N_3$. }For a positive contribution, we need $N_1,N_2,N_3\sim N$. Therefore, we get the desired estimate directly by \eqref{M0}-\eqref{N}.\\[10pt]
\emph{Case IIb: $N_1\ll N_2\sim N_3$. }We consider the cases 
\begin{align*}
IIb_1:\ N\lesssim N_1,\ IIb_2:\ N\gg N_1.
\end{align*}
\emph{Case $IIb_1$: $N\lesssim N_1\ll N_2\sim N_3$. }Here
\begin{align}
|M(\xi,\tau,\vec\xi,\vec\tau)|
&\,\lesssim \frac{1}{\langle\tau+\xi^2\rangle^{1/2}\langle\tau_1+\xi_1^2\rangle^{1/2}\langle\tau_2+\xi_2^2\rangle^{1/2}\langle\tau_3-\xi_3^2\rangle^{1/2}}.\label{IIb}
\end{align}
We subdivide $\Omega'_\xi= \Omega'_{\xi,+}\cup\Omega'_{\xi,-}$, where $\Omega'_{\xi,+}:=\{\vec\xi\in\Omega'_\xi:\ |\xi-\xi_1|\ge N_3^{-1/2}\}$ and $\Omega'_{\xi,-}:=\Omega'_\xi\smallsetminus\Omega'_{\xi,+}$. Then we can split $IIb_1$ into $IIb_{1,+}$ and $IIb_{1,-}$ with
\begin{align*}
IIb_{1,\pm}:= \underset{N\lesssim N_1\ll N_2\sim N_3}{\sum_{N_1,N_2,N_3}}\Big\|\chi_N(\xi)\int_{\mathbb R^3_\tau}\int_{\Omega'_{\xi,\pm}}M(\xi,\tau,\vec\xi,\vec\tau)\prod_{j=1}^3f_{N_j,u_j}(\xi_j,\tau_j)\,\mathrm d\vec\xi\,\mathrm d\vec\tau\Big\|_{L^2_{\xi,\tau}}.
\end{align*}
\emph{Case $IIb_{1,+}$: $N\lesssim N_1\ll N_2\sim N_3$ and $|\xi-\xi_1|\ge N_3^{-1/2}$. }Due to resonance relation \eqref{Resona}, we may assume that
\begin{align*}
\max\{|\tau+\xi^2|,|\tau_1+\xi_1^2|,|\tau_2+\xi_2^2|,|\tau_3-\xi_3^2|\}\gtrsim N_3^{1/2}\gtrsim N_1^{1/4}N_3^{1/4}.
\end{align*}
By \eqref{IIb}, we obtain
\begin{align}
|M(\xi,\tau,\vec\xi,\vec\tau)|\lesssim N_1^{-1/64}N_3^{-1/64}M_4(\xi,\tau,\vec\xi,\vec\tau)\label{IIb1}
\end{align}
and the conclusion follows from \eqref{N}.\\[10pt]
\emph{Case $IIb_{1,-}$: $N\lesssim N_1\ll N_2\sim N_3$ and $|\xi-\xi_1|< N_3^{-1/2}$.} In the periodic setting, we have $\xi,\xi_1\in\mathbb Z$ and $N_3^{-1/2}\le1$ implies $\xi=\xi_1$. Hence $\vec\xi\notin\Omega_\xi'$ and $\Omega'_{\xi,-}=\emptyset$.\\[10pt] 
For the non-periodic setting, we introduce 
\begin{align*}
\rho_{N_3,\xi}:=&\,\mathbbm1_{\left(\xi-N_3^{-1/2},\,\xi+N_3^{-1/2}\right)},\\
I_k:=&\,\big(kN_3^{-1/2},\,(k+1)N_3^{-1/2}\big).
\end{align*} 
Estimate \eqref{IIb} implies
\begin{align}
\Big\|\chi_N(&\xi)\int_{\mathbb R^3_\tau}\,\int_{\Omega'_{\xi,-}}M(\xi,\tau,\vec\xi,\vec\tau)\prod_{j=1}^3f_{N_j,u_j}(\xi_j,\tau_j)\,\mathrm d\vec\xi\,\mathrm\,d\vec\tau\Big\|_{L^2_{\xi,\tau}}\nonumber\\
\lesssim&\, \Big\|\langle\tau+\xi^2\rangle^{-1/2}\int_{\mathbb R^6_{\xi,\tau}}\rho_{N_3,\xi}(\xi_1)\prod_{j=1}^3\langle\xi_j\rangle^{1/2}\widehat{P_{N_j}u_j}(\xi_j,\tau_j)\,\mathrm d(\vec\xi,\vec\tau)\Big\|_{L^2_{\xi,\tau}}\nonumber\\
\le&\,\sum_{l\in\mathbb Z}\sum_{k\in\mathbb Z}\Big\|\langle\tau+\xi^2\rangle^{-1/2}\int_{\mathbb R^6_{\xi,\tau}}\mathbbm1_{I_k}(\xi_2)\mathbbm1_{I_l}(\xi_3)\rho_{N_3,\xi}(\xi_1)\nonumber\\
&\ \ \ \ \ \ \ \ \ \ \ \ \ \ \ \ \ \ \ \ \ \ \ \ \ \ \ \ \ \ \ \ \ \ \ \ \ \ \cdot\prod_{j=1}^3\langle\xi_j\rangle^{1/2}\widehat{P_{N_j}u_j}(\xi_j,\tau_j)\,\mathrm d(\vec\xi,\vec\tau)\Big\|_{L^2_{\xi,\tau}}.\label{osc}
\end{align}
For a positive contribution, we need $\xi_2\in I_k$ and $|\xi_2+\xi_3|=|\xi-\xi_1|\le N_3^{-1/2}$. Hence
\begin{align*}
\xi_3\in\big[-\xi_2-N_3^{-1/2},-\xi_2+N_3^{-1/2}\big]\subseteq\big[-(k+2)N_3^{-1/2},-(k-1)N_3^{-1/2}\big]\end{align*} and dividing the integration region into $\langle\tau_2+\xi_2^2\rangle\le\langle\tau_3-\xi_3^2\rangle$, $\langle\tau_2+\xi_2^2\rangle>\langle\tau_3-\xi_3^2\rangle$ provides
\begin{align*}
\eqref{osc}\lesssim&\,\sum_{l=0}^2\sum_{k\in\mathbb Z}\Big\|\langle\tau+\xi^2\rangle^{-1/2}\int_{\mathbb R^6_{\xi,\tau}}\mathbbm1_{I_k}(\xi_2)\mathbbm1_{I_{-(k+l)}}(\xi_3)\\
&\,\ \ \ \ \ \ \ \ \ \ \ \ \ \cdot\prod_{j=1}^3\langle\xi_j\rangle^{1/2}\widehat{P_{N_j}u_j}(\xi_j,\tau_j)\cdot\langle\tau_2+\xi_2^2\rangle^{-1/16}\langle\tau_3-\xi_3^2\rangle^{1/16} \,\mathrm d(\vec\xi,\vec\tau)\Big\|_{L^2_{\xi,\tau}}\\
&\,\ +\sum_{l=0}^2\sum_{k\in\mathbb Z}\Big\|\langle\tau+\xi^2\rangle^{-1/2}\int_{\mathbb R^6_{\xi,\tau}}\mathbbm1_{I_k}(\xi_2)\mathbbm1_{I_{-(k+l)}}(\xi_3)\\
&\,\ \ \ \ \ \ \ \ \ \ \ \ \ 
\cdot\prod_{j=1}^3\langle\xi_j\rangle^{1/2}\widehat{P_{N_j}u_j}(\xi_j,\tau_j)\cdot\langle\tau_2+\xi_2^2\rangle^{1/16}\langle\tau_3-\xi_3^2\rangle^{-1/16}\,\mathrm d(\vec\xi,\vec\tau)\Big\|_{L^2_{\xi,\tau}}\\
=:&\, A(N_1,N_2,N_3)+B(N_1,N_2,N_3).
\end{align*}
An application of estimate \eqref{L43} and the inequalities of Hölder and Cauchy-Schwarz leads to
\begin{align}
A(N_1,N_2,N_3)\lesssim&\,
\Big\|J_x^{1/2}P_{N_1}u_1\Big\|_{L^{4}_{t,x}} \sum_{l=0}^2\bigg(\sum_{k\in\mathbb Z}\Big\|P_{I_k}\big(J_x^{1/2}\Gamma^{-1/16}P_{N_2}u_2\big)\Big\|_{L^{4}_{t,x}}^2\bigg)^{1/2}
\nonumber\\
&\,\ \ \ \ \ \ \ \ \ \ \ \ \ \ \ \ \ \ \ \cdot\bigg(\sum_{k\in\mathbb Z}\big\|P_{I_{-(k+l)}}\big(J_x^{1/2}\Gamma_-^{1/16} P_{N_3}u_3\big)\big\|_{L^{4}_{t,x}}^2\bigg)^{1/2}.\label{a}
\end{align}
Bernstein's inequality, $|I_k|=N_3^{-1/2}$, $\mathrm{supp}\, u\subseteq\mathbb R\times[-T,T]$ and Strichartz estimate \eqref{LpLq} provide
\begin{align}
\Big\|P_{I_k}\big(J_x^{1/2}\Gamma^{-1/16}P_{N_2}u_2\big)\Big\|_{L^{4}_{t,x}}\lesssim&\,  N_3^{-\frac12\left(\frac12-\frac14\right)}\Big\|P_{I_k}\big(J_x^{1/2}P_{N_2}u_2\big)\Big\|_{L^4_tL^2_x}\nonumber\\
\lesssim&\, T^{1/4} N_3^{-\frac{1}{8}}\Big\|P_{I_k}\big(J_x^{1/2}\Gamma^{-1/16}P_{N_2}u_2\big)\Big\|_{L^{\infty}_tL^2_x}\nonumber\\
\lesssim&\, T^{1/4} N_3^{-\frac{1}{8}}\Big\|P_{I_k}\big(J_x^{1/2}\Gamma^{-1/16}P_{N_2}u_2\big)\Big\|_{X^{0,\frac{9}{16}}}.\label{b}
\end{align}
Applying estimate \eqref{b} to the second factor of \eqref{a} and \eqref{L41} to the other factors yields
\begin{align*}
A(N_1,N_2,N_3)\lesssim&\, T^{1/4}N_3^{-1/8}\|P_{N_1}u_1\|_{X^{\frac12,\frac{1}{2}}} \sum_{l=0}^2\Big(\sum_{k\in\mathbb Z}\|P_{I_k}P_{N_2}u_2\|_{X^{\frac12,\frac12}}^2\Big)^{1/2}\\
&\,\ \ \ \ \ \ \ \ \ \ \ \ \ \ \ \ \ \ \ \ \ \ \ \ \ \ \ \ \ \ \ \ \ \ \ \ \ \ \ \cdot\Big(\sum_{k\in\mathbb Z}\|P_{I_{-(k-l)}} P_{N_3}u_3\|_{X^{\frac12,\frac12,-}}^2\Big)^{1/2}\\
\lesssim&\,T^{1/4}N_1^{-1/16}N_3^{-{1}/{16}}\|P_{N_1}u_1\|_{X^{\frac12,\frac12}}\|P_{N_2}u_2\|_{X^{\frac12,\frac12}}\|P_{N_3}u_3\|_{X^{\frac12,\frac12,-}},
\end{align*}
where we used almost orthogonality and $N_1\lesssim N_3$.\\[10pt]
For $B(N_1,N_2,N_3)$, we obtain the same upper bound by switching between $u_2$ and $u_3$ and applying Bernstein's inequality on $\|P_{I_{-(k+l)}}P_{N_3}u_3\|_{L^4_{t,x}}$.\\[10pt]
Overall, we have shown that
\begin{align*}
II{b_{1,-}}\lesssim&\, \sum_{N_1,N_3}\sum_{N_2\sim N_3}\big(A(N_1,N_2,N_3)+B(N_1,N_2,N_3)\big)\\
\lesssim&\, T^{1/4}\|u_1\|_{\mathfrak X^{\frac12,\frac12}}\|u_2\|_{\mathfrak X^{\frac12,\frac12}}\|u_3\|_{\mathfrak X^{\frac12,\frac12,-}}.
\end{align*}
\emph{Case $IIb_2$: $N_1\ll N$ and $N_1\ll N_2\sim N_3$. }We have $|\xi_1+\xi_3|\sim N_3$, $|\xi_2+\xi_3|=|\xi-\xi_1|\sim N$ and by resonance relation \eqref{Resona}, we can suppose that
\begin{align*}
\max\{|\tau+\xi^2|,|\tau_1+\xi_1^2|,|\tau_2+\xi_2^2|,|\tau_3-\xi_3^2|\}
\gtrsim N_3N.
\end{align*}
Since $N_3\sim N_2$, the desired estimate can be concluded by the same arguments as in case $I$.\\[10pt]
\emph{Case III: $N_1\le N_2$ and $N_2\gg N_3$. }We further distinguish between 
\begin{align*}
IIIa_1:&\ N_1\gg N_3,\ N_1\ll N_2,\\IIIa_2:&\ N_1\gg N_3,\ N_1\sim N_2,\\
IIIb:&\ N_1\sim N_3,\ 
IIIc:\ N_1\ll N_3. 
\end{align*}
\emph{Case $IIIa_1$: $N_2\gg N_1\gg N_3$. }We will only obtain positive terms, if $N_2\sim N$ and, by resonance relation \eqref{Resona}, we have $\max\{|\tau+\xi^2|,|\tau_1+\xi_1^2|,|\tau_2+\xi_2^2|,|\tau_3-\xi_3^2|\}\gtrsim N_1N_2$. This means
\begin{align}
|M(\xi,\tau,\vec\xi,\vec\tau)|&\,\lesssim \frac{1}{\langle\tau+\xi^2\rangle^{1/2}\langle\tau_1+\xi_1^2\rangle^{1/2}\langle\tau_2+\xi_2^2\rangle^{1/2}\langle\tau_3-\xi_3^2\rangle^{1/2}}\nonumber\\
&\,\lesssim N_1^{-1/16}N_2^{-1/16}M_4(\xi,\tau,\vec\xi,\vec\tau)\label{IIIa11}.
\end{align}
Since $N_2^{-1/16}\lesssim N_3^{-1/16}$, we obtain the conclusion with arguments similar to case $II{b_{1,+}}$.\\[10pt]
\emph{Case $IIIa_2$: $N_1\sim N_2\gg N_3$. }For a positive contribution, we need $N_2\gtrsim N$ and 
\begin{align*}
\max\{|\tau+\xi^2|,|\tau_1+\xi_1^2|,|\tau_2+\xi_2^2|,|\tau_3-\xi_3^2|\}
\gtrsim N_1N_2\sim N_2^2\gtrsim NN_2
\end{align*}
by resonance relation \eqref{Resona}. Hence $|M(\xi,\tau,\vec\xi,\vec\tau)|\,\lesssim  \sum_{j=1}^3M_j(\xi,\tau,\vec\xi,\vec\tau)$. Since $N_2^{-1/8}\lesssim N_2^{-1/16}N_3^{-1/16}$, we get the desired estimate by similar arguments as in $I$.\\[10pt]
\emph{Case IIIb: $N_2\gg N_1\sim N_3$. }We can assume that $N_2\sim N$ and
\begin{align*}
\max\{|\tau+\xi^2|,|\tau_1+\xi_1^2|,|\tau_2+\xi_2^2|,|\tau_3-\xi_3^2|\}\gtrsim N_2|\xi-\xi_2|\ge N_2^{1/2},
\end{align*}
if $|\xi-\xi_2|\ge N_2^{-1/2}$ -- otherwise we use similar arguments to case $IIb_{1,-}$. We obtain
\begin{align*}
|M(\xi,\tau,\vec\xi,\vec\tau)|\lesssim&\, \frac{1}{\langle\tau+\xi^2\rangle^{1/2}\langle\tau_1+\xi_1^2\rangle^{1/2}\langle\tau_2+\xi_2^2\rangle^{1/2}\langle\tau_3-\xi_3^2\rangle^{1/2}}\nonumber\allowdisplaybreaks[1]\\
\lesssim&\, N_2^{-1/32}M_4(\xi,\tau,\vec\xi,\vec\tau)
\end{align*}
which implies the conclusion by arguing as in case $IIb_{1,+}$.\\[10pt]
\emph{Case IIIc: $N_2\gg N_3\gg N_1$. }Here $N_2\sim N$ and 
\begin{align*}
\max\{|\tau+\xi^2|,|\tau_1+\xi_1^2|,|\tau_2+\xi_2^2|,|\tau_3-\xi_3^2|\}\gtrsim N_2N_3.
\end{align*}
Hence $|M(\xi,\tau,\vec\xi,\vec\tau)|\lesssim \sum_{j=1}^3M_j(\xi,\tau,\vec\xi,\vec\tau)$ and we obtain the desired estimate as argumented in case $I$.
\end{proof}
\begin{satzl}[Trilinear estimate for \boldmath{$\mathcal Y^{\frac12,-1}$}\unboldmath{}]\label{Tri}
Let $T\in(0,1]$, $u_j\in\mathcal S(\mathbb X\times\mathbb R)$ and $\mathrm{supp}\,u_j\subseteq\mathbb X\times[-T,T],\ j\in\{1,2,3\}$. There is an $\varepsilon>0$ such that
\begin{align}
\|\mathcal T(u_1,u_2,u_3)\|_{\mathcal Y^{\frac12,-1}}\lesssim T^\varepsilon\|u_1\|_{\mathfrak X^{\frac12,\frac12}}\|u_2\|_{\mathfrak X^{\frac12,\frac12}}\|u_3\|_{\mathfrak X^{\frac12,\frac12,-}}.\label{TY}
\end{align}
\end{satzl}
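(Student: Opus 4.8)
The plan is to imitate the proof of Theorem~\ref{L6}, replacing the multiplier $M$ by $\tilde M$, the target norm $\|\cdot\|_{X^{1/2,-1/2}}$ by $\|\cdot\|_{Y^{1/2,-1}}$, and the model estimates \eqref{M0}--\eqref{N} by their $\tilde M_j$-analogues. First I would reduce to frequency-localised pieces: by definition of $\mathcal Y^{1/2,-1}$ it suffices to bound $\|P_N\mathcal T(u_1,u_2,u_3)\|_{Y^{1/2,-1}}$ uniformly in $N\in\mathcal D_1$, and on the Fourier side $\langle\xi\rangle^{1/2}\langle\tau+\xi^2\rangle^{-1}\widehat{P_N\mathcal T}(\xi,\tau)$ equals, up to a constant, $\chi_N(\xi)\sum_{N_1,N_2,N_3}\int\tilde M(\xi,\tau,\vec\xi,\vec\tau)\prod_j f_{N_j,u_j}\,\mathrm d\vec\xi\,\mathrm d\vec\tau$, with the same $f_{N_j,u_j}$ and the same domain $\Omega_\xi$ as in the proof of Theorem~\ref{L6}; the point is simply that $\tilde M=M/\langle\tau+\xi^2\rangle^{1/2}$ carries exactly the extra factor $\langle\tau+\xi^2\rangle^{-1/2}$ that distinguishes $X^{1/2,-1/2}$ from $Y^{1/2,-1}$. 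By \eqref{tm} we have $|\tilde M|\lesssim\sum_{j=0}^4\tilde M_j$, so everything reduces to five model bounds $\|\int\tilde M_j\prod f_j\,\mathrm d\vec\xi\,\mathrm d\vec\tau\|_{L^2_\xi L^1_\tau}\lesssim(\text{suitable right-hand side})$ for $u_j$ supported in $\mathbb X\times(-T,T)$.

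For $j\in\{1,2,3,4\}$, the factor $\langle\tau+\xi^2\rangle^{-1}$ carried by $\tilde M_j$ gives room: for each fixed $\xi$, Cauchy--Schwarz in $\tau$ against $\langle\tau+\xi^2\rangle^{-1/2-\delta}\in L^2_\tau$ bounds $\|\int\tilde M_j\prod f_j\|_{L^2_\xi L^1_\tau}$ by a constant times $\|\int\langle\tau+\xi^2\rangle^\delta M_j\prod f_j\|_{L^2_{\xi,\tau}}$; but $\langle\tau+\xi^2\rangle^\delta M_j$ is just $M_j$ with its $\langle\tau+\xi^2\rangle$-exponent weakened by $\delta$, and the proofs of \eqref{M1}, \eqref{M2} (resp.\ \eqref{M3}, \eqref{N}) actually yield the stronger bound with exponent $-3/8$ (resp.\ $-7/16$) there, so all four estimates persist for $\delta$ small. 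For $j=0$ there is no output-modulation decay left to exploit, but $\tilde M_0$ was designed, via the resonance relation \eqref{Resona} on $A_0$, to carry instead the spatial gain $\langle\xi\rangle^{-1/2+3\delta}\langle\xi_3\rangle^{-1/2+3\delta}$ together with $\prod_j\langle\tau_j\pm\xi_j^2\rangle^{-1/2-\delta}$. Here I would drop the harmless cutoff $\mathbbm1_{A_0}$, observe that the remaining kernel times $\prod f_j$ is nonnegative and factorises in $\tau_1,\tau_2,\tau_3$, and use Fubini to reduce $\|\int\tilde M_0\prod f_j\|_{L^2_\xi L^1_\tau}$ to $\|\langle\xi\rangle^{-1/2+3\delta}\int_{\Omega_\xi}\langle\xi_3\rangle^{3\delta}\prod_j h_j(\xi_j)\,\mathrm d\vec\xi\|_{L^2_\xi}$, where $h_j(\xi_j)=\int\langle\tau_j\pm\xi_j^2\rangle^{-\delta}|\widehat{u_j}(\xi_j,\tau_j)|\,\mathrm d\tau_j\lesssim_\delta\|\langle\tau_j\pm\xi_j^2\rangle^{1/2-\delta'}\widehat{u_j}(\xi_j,\cdot)\|_{L^2_\tau}$ by a further Cauchy--Schwarz. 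This is now a purely spatial trilinear estimate for the profiles $h_j$, of the form $\|G_1G_2G_3\|_{H^{-1/2+3\delta}}\lesssim\|G_1\|_{H^{3/8}}\|G_2\|_{H^{3/8}}\|G_3\|_{L^2}$ (with $\widehat{G_3}=\langle\xi\rangle^{3\delta}h_3$ and $\widehat{G_j}=h_j$ otherwise), which follows from Hölder's inequality and the Sobolev embeddings $H^{3/8}(\mathbb R)\hookrightarrow L^8$, $H^{1/2-3\delta}(\mathbb R)\hookrightarrow L^4$; read back in terms of $u_j$ this yields $\lesssim\|u_1\|_{X^{3/8,1/2-\delta'}}\|u_2\|_{X^{3/8,1/2-\delta'}}\|u_3\|_{X^{3\delta,1/2-\delta',-}}$, which carries the negative powers of $N_1,N_2,N_3$ needed for summability and a modulation exponent $<\tfrac12$ needed for the $T^\varepsilon$-gain.

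With the five model estimates in hand, the rest follows the proof of Theorem~\ref{L6} essentially verbatim: after reducing to $P_{N_j}u_j\ge0$, $P_{N_j}u_j=\chi_T(t)P_{N_j}u_j$, one splits according to $N_1\le N_2\ll N_3$, $N_1\le N_2\sim N_3$, $N_2\gg N_3$ with the same sub-cases, uses the resonance relation \eqref{Resona} in each case to harvest a surplus negative power of $\max\{N_1,N_2,N_3\}$, dominates $|\tilde M|$ pointwise by the relevant $\tilde M_j$, applies the corresponding model bound, and sums the resulting geometric series in $N_1,N_2,N_3$ as in \eqref{X}--\eqref{XXX}. The $T^\varepsilon$ is produced exactly as before, through the cutoff-in-time estimate $\|P_N(\chi_T u)\|_{X^{s,b_1}}\lesssim T^{b_2-b_1}\|P_Nu\|_{X^{s,b_2}}$ for $0\le b_1<b_2<\tfrac12$, applied to the factors carrying a modulation exponent $<\tfrac12$.

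The main obstacle is the $\tilde M_0$ term, i.e.\ the fully resonant, low-output-modulation contribution. In the $X^{1/2,-1/2}$ setting of Theorem~\ref{L6} this was dispatched by the $L^2$-based estimate \eqref{M0}, but here the entire $\langle\tau+\xi^2\rangle^{-1}$-weight of $Y^{1/2,-1}$ has already been spent in passing from $M$ to $\tilde M$, so nothing remains to convert $L^1_\tau$ back into $L^2_\tau$. Overcoming this — manufacturing the spatial gain $\langle\xi\rangle^{-1/2+3\delta}\langle\xi_3\rangle^{-1/2+3\delta}$ out of the resonance identity and then closing with a spatial Sobolev trilinear estimate — is the one genuinely new ingredient compared with Herr's $H^s$-argument. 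A secondary point, present throughout, is that in the Besov scale only $\sup_{N_j}$, not an $\ell^2_{N_j}$-sum, of the localised pieces is available on the right-hand side, so one must carry small negative powers of $\max\{N_1,N_2,N_3\}$ in every case.
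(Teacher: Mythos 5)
Your proposal is correct and follows the paper's own proof in all essentials: after reducing to $\|\chi_N(\xi)\sum_{N_1,N_2,N_3}\int\tilde M\prod f_{N_j,u_j}\,\mathrm d\vec\xi\,\mathrm d\vec\tau\|_{L^2_\xi L^1_\tau}$ and splitting $|\tilde M|\lesssim\sum_{j=0}^4\tilde M_j$, you dispatch $\tilde M_1,\dots,\tilde M_4$ by Cauchy--Schwarz in $\tau$ (estimate \eqref{CSU}, with a small $\gamma$ where the paper takes $\gamma=\tfrac18$) to reduce to the $L^2_{\xi,\tau}$ bounds with weakened $\langle\tau+\xi^2\rangle$-exponent, dispatch $\tilde M_0$ by Young in $\tau$, Cauchy--Schwarz in each $\tau_j$, and a spatial trilinear estimate, and then rerun the dyadic case analysis of Theorem \ref{L6} with the $\tilde M_j$-bounds in place of the $M_j$-bounds. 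The only deviation is cosmetic and occurs in the $\tilde M_0$ step, where you phrase the residual spatial estimate as the Sobolev multiplication $H^{3/8}\cdot H^{3/8}\cdot L^2\hookrightarrow H^{-1/2+3\delta}$, whereas the paper performs the same computation directly on the Fourier side via Young's and Hölder's inequalities; both produce the negative powers of $N_1,N_2,N_3$ needed to sum the dyadic series.
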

\begin{proof}By an application of the triangle inequality, we may assume that $P_{N_j}u_j\ge0$ and $P_{N_j}u_j=\chi_T(t)P_{N_j}u_j$. As in the main part of the previous proof, we will omit the case of small frequencies and focus on $|\xi|$ of order $N>1$. We have
\begin{align}
\|P_{N}\mathcal T&\,(u_1,u_2,u_3)\|_{Y^{\frac12,-1}}\nonumber\\
\le&\, \sum_{N_1,N_2,N_3}\Big\|\chi_N(\xi)\int_{\mathbb R^3_\tau}\int_{\mathbb Y^3_\xi}\tilde M(\xi,\tau,\vec\xi,\vec\tau)\prod_{j=1}^3f_{N_j,u_j}(\xi_j,\tau_j)\,\mathrm d\vec\xi\,\mathrm d\vec\tau\Big\|_{L^2_\xi L^1_\tau}.\label{L2L1}
\end{align}
We subdivide $\tilde M$ into $\tilde M_0$, $\tilde M_1$, $\tilde M_2$, $\tilde M_3$, $\tilde M_4$ and adapt the ideas of \cite[Thm. 4.2]{Herr} to our setting: For $\gamma>0$, $\xi\in\mathbb Y$ and $w\colon\mathbb Y\times\mathbb R\to\mathbb C$ such that $w(\xi,\,\cdot\,)\in L^2(\mathbb R)$, Hölder's inequality and the transformation theorem lead to
\begin{align}
\big\|\langle\tau\pm\xi^2\rangle^{-\frac12-\gamma}w(\xi,\tau)\big\|_{L^1_\tau}\lesssim_{\gamma}\big\|w(\xi,\tau)\big\|_{L^2_{\tau}}\label{CSU}.
\end{align}
By definition of $\tilde M_0$ and Young's convolution inequality, we get
\begin{align}
\bigg\|&\chi_N(\xi)\int_{\mathbb R^3_\tau}\int_{\mathbb Y^3_\xi}\tilde M_0(\xi,\tau,\vec\xi,\vec\tau)\prod_{j=1}^3f_{N_j,u_j}(\xi_j,\tau_j)\,\mathrm d\vec\xi\,\mathrm d\vec\tau\bigg\|_{L^2_\xi L^1_\tau}\nonumber\\
\lesssim&\, \bigg\|\int_{\mathbb Y^3_\xi}\langle\xi\rangle^{-\frac12+\delta}\prod_{j=1}^2\Big\|\frac{f_{N_j,u_j}(\xi_j,\tau_j)}{\langle\xi_j\rangle^{\frac12}\langle\tau_j+\xi_j^2\rangle^{\frac\delta2}}\Big\|_{L^2_{\tau_j}}\cdot\Big\|\frac{f_{N_3,u_3}(\xi_3,\tau_3)}{\langle\xi_3\rangle^{\frac12-3\delta}\langle\tau_3-\xi_3^2\rangle^{\frac\delta2}}\Big\|_{L^2_{\tau_3}}\mathrm d\vec\xi\bigg\|_{L^2_\xi}.\label{os}
\end{align}
Let 
\begin{align*}
g_{N_j,u_j}(\xi,\tau):=&\, \langle\tau+\xi^2\rangle^{-\delta/2}f_{N_j,u_j}(\xi,\tau),\ j\in\{1,2\},\\
g_{N_3,u_3}(\xi,\tau):=&\, \langle\tau-\xi^2\rangle^{-\delta/2}f_{N_3,u_3}(\xi,\tau).
\end{align*} 
In the sequel, we choose $\delta=\tfrac{1}{24}$. Then, by Young's and Hölder's inequality,
\begin{align}
\eqref{os}\lesssim&\,\bigg\|\int_{\mathbb Y^3_\xi}\langle\xi\rangle^{-3/8}\prod_{j=1}^2\langle\xi_j\rangle^{-1/2}\cdot\langle\xi_3\rangle^{-3/8}\prod_{j=1}^3\Big\|g_{N_j,u_j}(\xi_j,\tau_j)\Big\|_{L^2_{\tau_j}}\mathrm d\vec\xi\bigg\|_{L^2_\xi}\nonumber\\
\lesssim&\, N_1^{-3/16} N_2^{-3/16} N_3^{-1/16}\prod_{j=1}^3
\Big\|g_{N_j,u_j}(\xi_j,\tau_j)\Big\|_{L^2_{\xi_j,\tau_j}}\nonumber\\
=&\, N_1^{-3/16} N_2^{-3/16} N_3^{-1/16}\prod_{j=1}^2\Big\|P_{N_j}u_j\Big\|_{X^{\frac12,\frac{23}{48}}}\cdot\Big\|P_{N_3}u_3\Big\|_{X^{\frac12,\frac{23}{48},-}}.\nonumber
\end{align}
Hence
\begin{align}
\Big\|\chi_N(\xi)\int_{\mathbb R^3_\tau}\int_{\mathbb Y^3_\xi}\tilde M_0(\xi,\tau,\vec\xi,\vec\tau)&\,\prod_{j=1}^3f_{N_j,u_j}(\xi_j,\tau_j)\,\mathrm d\vec\xi\,\mathrm d\vec\tau\Big\|_{L^2_{\xi}L^1_\tau}\nonumber\\
\lesssim&\, T^\varepsilon \|P_{N_1}u_1\|_{X^{\frac{5}{16},\frac{1}{2}}}\|P_{N_2}u_2\|_{X^{\frac{5}{16},\frac{1}{2}}}\|u_3\|_{X^{\frac{7}{16},\frac{1}{2},-}}.\label{tM0}
\end{align}
By definition of $\tilde M_1$ and estimate \eqref{CSU}, we have
\begin{align*}
\bigg\|\chi_N(\xi)\int_{\mathbb R^3_\tau}&\,\int_{\mathbb Y^3_\xi}\tilde M_1(\xi,\tau,\vec\xi,\vec\tau)\prod_{j=1}^3f_{N_j,u_j}(\xi_j,\tau_j)\,\mathrm d\vec\xi\,\mathrm d\vec\tau\bigg\|_{L^2_\xi L^1_\tau}\nonumber\\
\lesssim&\, \bigg\|\chi_N(\xi)\int_{\mathbb R^3_\tau}\int_{\mathbb Y^3_\xi}\langle\tau+\xi^2\rangle^{1/8} M_1(\xi,\tau,\vec\xi,\vec\tau)\prod_{j=1}^3f_{N_j,u_j}(\xi_j,\tau_j)\,\mathrm d\vec\xi\,\mathrm d\vec\tau\bigg\|_{L^2_{\xi,\tau}}\\
\lesssim&\,\Big\|\Gamma^{1/2}P_{N_1}u_1\cdot{P_{N_2}u_{2}}\cdot J_{x}^{1/2}P_{N_3}u_3\Big\|_{X^{0,-\frac38}}.
\end{align*}
Applying \eqref{m11} leads to
\begin{align}
\Big\|\chi_N(\xi)\int_{\mathbb R^3_\tau}\int_{\mathbb Y^3_\xi}\tilde M_1&\,(\xi,\tau,\vec\xi,\vec\tau)\prod_{j=1}^3f_{N_j,u_j}(\xi_j,\tau_j)\,\mathrm d\vec\xi\,\mathrm d\vec\tau\Big\|_{L^2_{\xi}L^1_\tau}\nonumber\\
\lesssim&\, T^\varepsilon\|P_{N_1}u_1\|_{X^{\frac38,\frac12}}\|P_{N_2}u_2\|_{X^{\frac38,\frac12}}\|P_{N_3}u_3\|_{X^{\frac12,\frac12,-}}.\label{tM1}
\end{align}
By changing the first two factors, one can show analogously
\begin{align}
\Big\|\chi_N(\xi)\int_{\mathbb R^3_\tau}\int_{\mathbb Y^3_\xi}\tilde M_2&(\xi,\tau,\vec\xi,\vec\tau)\prod_{j=1}^3f_{N_j,u_j}(\xi_j,\tau_j)\,\mathrm d\vec\xi\,\mathrm d\vec\tau\Big\|_{L^2_{\xi}L^1_\tau}\nonumber\\
\lesssim&\, T^\varepsilon\|P_{N_1}u_1\|_{X^{\frac38,\frac12}}\|P_{N_2}u_2\|_{X^{\frac38,\frac12}}\|P_{N_3}u_3\|_{X^{\frac12,\frac12,-}}.\label{tM2}
\end{align}
For $\tilde M_3$, we conclude by \eqref{CSU} that
\begin{align*}
\bigg\|\chi_N(\xi)\int_{\mathbb R^3_\tau}&\,\int_{\mathbb Y^3_\xi}\tilde M_3(\xi,\tau,\vec\xi,\vec\tau)\prod_{j=1}^3f_{N_j,u_j}(\xi_j,\tau_j)\,\mathrm d\vec\xi\,\mathrm d\vec\tau\bigg\|_{L^2_\xi L^1_\tau}\nonumber\\
\lesssim&\, \bigg\|\int_{\mathbb R^3_\tau}\int_{\mathbb Y^3_\xi}\langle\tau+\xi^2\rangle^{-7/16}\langle\tau_3-\xi_3^2\rangle^{1/2}\langle\xi_3\rangle^{1/2} \prod_{j=1}^3\widehat{P_{N_j}u_j}(\xi_j,\tau_j)\,\mathrm d\vec\xi\,\mathrm d\vec\tau\bigg\|_{L^2_{\xi,\tau}}\\
\lesssim&\, \big\|{P_{N_1}u_1}\cdot{P_{N_2}u_2}\cdot J_x^{1/2}\Gamma_-^{1/2}P_{N_3}u_3\big\|_{X^{0,-\frac7{16}}}
\end{align*}
and \eqref{m33} implies 
\begin{align}
\Big\|\chi_N(\xi)\int_{\mathbb R^3_\tau}\int_{\mathbb Y^3_\xi}\tilde M_3&\,(\xi,\tau,\vec\xi,\vec\tau)\prod_{j=1}^3f_{N_j,u_j}(\xi_j,\tau_j)\,\mathrm d\vec\xi\,\mathrm d\vec\tau\Big\|_{L^2_{\xi}L^1_\tau}\nonumber\\
\lesssim&\, T^\varepsilon \|P_{N_1}u_1\|_{X^{\frac38,\frac12}}\|P_{N_2}u_2\|_{X^{\frac38,\frac12}}\|P_{N_3}u_3\|_{X^{\frac12,\frac12,-}}
.\label{tM3}
\end{align}
By definition of $\tilde M_4$ and estimate \eqref{CSU}, we have
\begin{align*}
\bigg\|\chi_N(\xi)\int_{\mathbb R^3_\tau}&\,\int_{\mathbb Y^3_\xi}\tilde M_4(\xi,\tau,\vec\xi,\vec\tau)\prod_{j=1}^3f_{N_j,u_j}(\xi_j,\tau_j)\,\mathrm d\vec\xi\,\mathrm d\vec\tau\bigg\|_{L^2_\xi L^1_\tau}\\
\lesssim&\, \bigg\|\int_{\mathbb R^3_\tau}\int_{\mathbb Y^3_\xi}\langle\tau+\xi^2\rangle^{-1/2} M_4(\xi,\tau,\vec\xi,\vec\tau)\prod_{j=1}^3f_{N_j,u_j}(\xi_j,\tau_j)\,\mathrm d\vec\xi\,\mathrm d\vec\tau\bigg\|_{L^2_\xi L^1_\tau}\\
\lesssim&\, \big\|J_x^{1/2}\Gamma^{1/16}{P_{N_1}u_1}\cdot J_x^{1/2}\Gamma^{1/16}{P_{N_2}u_2}\cdot J_x^{1/2}\Gamma^{1/16}_-{P_{N_3}u_3}\big\|_{X^{0,-\frac{7}{16}}}.
\end{align*}
From \eqref{m44}, we conclude
\begin{align}
\Big\|\chi_N(\xi)\int_{\mathbb R^3_\tau}\int_{\mathbb Y^3_\xi}\tilde M_4&\,(\xi,\tau,\vec\xi,\vec\tau)\prod_{j=1}^3f_{N_j,u_j}(\xi_j,\tau_j)\,\mathrm d\vec\xi\,\mathrm d\vec\tau\Big\|_{L^2_{\xi}L^1_\tau}\nonumber\\
\lesssim&\, T^\varepsilon \|P_{N_1}u_1\|_{X^{\frac12,\frac12}}\|P_{N_2}u_2\|_{X^{\frac12,\frac12}}\|P_{N_3}u_3\Big\|_{X^{\frac12,\frac12,-}}.\label{tN}
\end{align}
Now, we show estimate \eqref{L2L1}. As in the proof of theorem \ref{L6}, it suffices to consider $\Omega'_\xi$ instead of $\mathbb Y^3_\xi$. We denote
\begin{align*}
\sum_{N_1,N_2,N_3}\Big\|&\,\chi_N(\xi)\int_{\mathbb R^3_\tau}\int_{\Omega'_\xi}\tilde M(\xi,\tau,\vec\xi,\vec\tau)\prod_{j=1}^3f_{N_j,u_j}(\xi_j,\tau_j)\,\mathrm d\vec\xi\,\mathrm d\vec\tau\Big\|_{L^2_\xi L^1_\tau}\\
\lesssim&\, I+IIa+II{b_{1,+}}+II{b_{1,-}}+IIb_2+IIIa_1+IIIa_2+IIIb+IIIc,
\end{align*}
where $I,...,IIIc$ refer to the same cases as in the proof of theorem \ref{L6}.\\[10pt]
\emph{Case I: $N_1\le N_2\ll N_3$. }We have $N\sim N_3$ and 
\begin{align*}
\max\{|\tau+\xi^2|,|\tau_1+\xi_1^2|,|\tau_2+\xi_2^2|,|\tau_3-\xi_3^2|\}\sim NN_3.
\end{align*}
Since $\tilde M(\xi,\tau,\vec\xi,\vec\tau)=\langle\tau+\xi^2\rangle^{-1/2}M(\xi,\tau,\vec\xi,\vec\tau)$, it follows from the same arguments as in the proof of theorem \ref{L6} that
\begin{align}
|\tilde M(\xi,\tau,\vec\xi,\vec\tau)|
\lesssim&\, \langle\tau+\xi^2\rangle^{-1/2}\sum_{j=0}^3M_j(\xi,\tau,\vec\xi,\vec\tau)\nonumber\\
=&\, \langle\tau+\xi^2\rangle^{-1/2}M_0(\xi,\tau,\vec\xi,\vec\tau)+\sum_{j=1}^3\tilde M_j(\xi,\tau,\vec\xi,\vec\tau)\nonumber\\
\lesssim&\, \sum_{j=0}^3\tilde M_j(\xi,\tau,\vec\xi,\vec\tau),\label{tmop}
\end{align}
where we used resonance relation \eqref{Resona} in the last step as follows: For $(\vec\xi,\vec\tau)\in A_0(\xi,\tau)$, we have 
\begin{align*}
\langle\tau+\xi^2\rangle\gtrsim NN_3\sim\langle\xi\rangle\langle\xi_3\rangle.
\end{align*}
Hence, for any $\delta\in(0,\tfrac16)$, it holds
\begin{align}
\langle\tau+\xi^2\rangle^{1/2}\gtrsim&\, 
\langle\xi\rangle^{\frac12-3\delta}\langle\xi_3\rangle^{\frac12-3\delta}\langle\tau_1+\xi_1^2\rangle^{\delta}\langle\tau_2+\xi_2^2\rangle^{\delta}\langle\tau_3-\xi_3^2\rangle^{\delta}.\label{Party}
\end{align}
Finally, by \eqref{tM0}-\eqref{tM3}, we obtain
\begin{align*}
I\lesssim&\, T^\varepsilon\sum_{N_1,N_2}\sum_{N_3\sim N}\|P_{N_1}u_1\|_{X^{\frac38,\frac12}}\|P_{N_2}u_2\|_{X^{\frac38,\frac12}}\|P_{N_3}u_3\|_{X^{\frac12,\frac12,-}}\\
\lesssim&\, T^\varepsilon\|u_1\|_{\mathfrak X^{\frac12,\frac12}}\|u_2\|_{\mathfrak X^{\frac12,\frac12}}\|u_3\|_{\mathfrak X^{\frac12,\frac12,-}}.
\end{align*}
%
\emph{Case IIa: $N_1\sim N_2\sim N_3$. }Since $N_1,N_2,N_3\sim N$, this follows from \eqref{tM0}-\eqref{tN}.\\[10pt]
\emph{Case $IIb_{1,\pm}$: $N\sim N_1\ll N_2\sim N_3$. }Since $|\tilde M(\xi,\tau,\vec\xi,\vec\tau)|= \langle\tau+\xi^2\rangle^{-1/2}|M(\xi,\tau,\vec\xi,\vec\tau)|$, estimate \eqref{IIb} implies
\begin{align*}
|\tilde M(\xi,\tau,\vec\xi,\vec\tau)|
\lesssim&\,\frac{1}{\langle\tau+\xi^2\rangle\langle\tau_1+\xi_1^2\rangle^{1/2}\langle\tau_2+\xi_2^2\rangle^{1/2}\langle\tau_3-\xi_3^2\rangle^{1/2}}.
\end{align*}
First, consider $\vec\xi\in \Omega'_{\xi,+}$ which means $|\xi-\xi_1|\ge N_3^{-1/2}$. As in \eqref{IIb1}, we have
\begin{align}
|\tilde M(\xi,\tau,\vec\xi,\vec\tau)|\lesssim N_1^{-1/64}N_3^{-1/64}\langle\tau+\xi^2\rangle^{-1/2}M_4(\xi,\tau,\vec\xi,\vec\tau)\label{61}.
\end{align}
An application of \eqref{CSU} and Strichartz estimates \eqref{L43},\,\eqref{L41} together with Hölder's inequality provides 
\begin{align*}
\Big\|\chi_N(\xi)\int_{\mathbb R^3_\tau}&\,\int_{\Omega'_{\xi,+}}\langle\tau+\xi^2\rangle^{-1/2}M_4(\xi,\tau,\vec\xi,\vec\tau)\prod_{j=1}^3f_{N_j,u_j}(\xi_j,\tau_j)\,\mathrm d\vec\xi\,\mathrm d\vec\tau\Big\|_{L^2_\xi L^1_\tau}\\
\lesssim&\, \big\|J_x^{1/2}\Gamma^{1/16}P_{N_1}u_1\cdot J_x^{1/2}\Gamma^{1/16}P_{N_2}u_2\cdot J_x^{1/2}\Gamma^{1/16}_-P_{N_3}u_3\big\|_{X^{0,-\frac{13}{32}}}\\
\lesssim&\, \big\|J_x^{1/2}\Gamma^{1/16}P_{N_1}u_1\big\|_{L^4_{t,x}}\big\|J_x^{1/2}\Gamma^{1/16}P_{N_2}u_2\big\|_{L^4_{t,x}}\big\|J_x^{1/2}\Gamma^{1/16}_-P_{N_3}u_3\big\|_{L^4_{t,x}}\\
\lesssim&\, 
T^\varepsilon\|P_{N_1}u_1\|_{X^{\frac12,\frac12}}\|P_{N_2}u_2\|_{X^{\frac12,\frac12}}\|P_{N_3}u_3\|_{X^{\frac12,\frac12,-}}
\end{align*}
%
and we obtain the desired estimate by \eqref{61}, \eqref{X} and \eqref{XX}.\\[10pt]
Now, let $\vec\xi\in\Omega'_{\xi,-}$ which means $|\xi-\xi_1|<N_3^{1/2}$. Estimate \eqref{CSU} implies
\begin{align*}
\Big\|\chi_N(\xi)&\int_{\mathbb R^3_\tau}\int_{\Omega'_{\xi,-}}\tilde M(\xi,\tau,\vec\xi,\vec\tau)\prod_{j=1}^3f_{N_j,u_j}(\xi_j,\tau_j)\,\mathrm d\vec\xi\,\mathrm d\vec\tau\Big\|_{L^2_\xi L^1_\tau}\\
\lesssim&\, \Big\|\chi_N(\xi)\int_{\mathbb R^3_\tau}\int_{\Omega'_{\xi,-}}\langle\tau+\xi^2\rangle^{-1/2}M(\xi,\tau,\vec\xi,\vec\tau)\prod_{j=1}^3f_{N_j,u_j}(\xi_j,\tau_j)\,\mathrm d\vec\xi\,\mathrm d\vec\tau\Big\|_{L^2_\xi L^1_\tau}\\
\lesssim&\, \Big\|\chi_N(\xi)\int_{\mathbb R^3_\tau}\int_{\Omega'_{\xi,-}}\langle\tau+\xi^2\rangle^{1/16}M(\xi,\tau,\vec\xi,\vec\tau)\prod_{j=1}^3f_{N_j,u_j}(\xi_j,\tau_j)\,\mathrm d\vec\xi\,\mathrm d\vec\tau\Big\|_{L^2_{\xi,\tau}}.
\end{align*}
Finally, we can argue as done in the proof of theorem \ref{L6}: Note that estimate \eqref{L43} used in \eqref{a} is also valid for $b=-\tfrac7{16}$ instead of $b=-\tfrac{1}{2}$. Hence, we can ignore the extra term $\langle\tau+\xi^2\rangle^{1/16}$, here.\\[10pt]
\emph{Case $IIb_2$: $N_1\ll N$ and $N_1\ll N_2\sim N_3$. }We may assume that
\begin{align*}\max\{|\tau+\xi^2|,|\tau_1+\xi_1^2|,|\tau_2+\xi_2^2|,|\tau_3-\xi_3^2|\}\gtrsim NN_3\end{align*} 
which provides the conclusion by arguing as in case $I$.\\[10pt] 
\emph{Case $IIIa_1$: $N_2\gg N_1\gg N_3$. }We have $N_2\sim N$ and
\begin{align*}
\max\{|\tau+\xi^2|,|\tau_1+\xi_1^2|,|\tau_2+\xi_2^2|,|\tau_3-\xi_3^2|\}\gtrsim N_2N_1. 
\end{align*}
This means
\begin{align*}
|\tilde M(\xi,\tau,\vec\xi,\vec\tau)|=&\, \langle\tau+\xi^2\rangle^{-1/2}|M(\xi,\tau,\vec\xi,\vec\tau)|\\
\lesssim&\, \langle\tau+\xi^2\rangle^{-1/2}N_1^{-1/16}N_2^{-1/16}M_4(\xi,\tau,\vec\xi,\vec\tau)
\end{align*}
by \eqref{IIIa11} and we get the desired estimate using $N_2^{-1/16}\le N_3^{-1/16}$ as in case $IIb_{1,+}$.\\[10pt]
\emph{Case $IIIa_2$: $N_1\sim N_2\gg N_3$. }We need $N_2\gtrsim N$ and
\begin{align*}
\max\{|\tau+\xi^2|,|\tau_1+\xi_1^2|,|\tau_2+\xi_2^2|,|\tau_3-\xi_3^2|\}\gtrsim N_2^2.
\end{align*}
Hence $|\tilde M(\xi,\tau,\vec\xi,\vec\tau)|\lesssim \sum_{j=0}^3\tilde M_j(\xi,\tau,\vec\xi,\vec\tau)$ by \eqref{tmop}. Since $N_3^{-1/8}\lesssim N_2^{-1/16}N_3^{-1/16}$, the conclusion follows from \eqref{tM0}-\eqref{tM3}.\\[10pt]
\emph{Case $IIIb$: $N_2\gg N_1\sim N_3$. }Here, $N_2\sim N$ and 
\begin{align*}
\max\{|\tau+\xi^2|,|\tau_1+\xi_1^2|,|\tau_2+\xi_2^2|,|\tau_3-\xi_3^2|\}\gtrsim N_2^{1/2},
\end{align*}
if $|\xi-\xi_1|\ge N_2^{-1/2}$ -- otherwise we use the ideas of $IIb_{1,-}$. Hence
\begin{align*}
|\tilde M(\xi,\tau,\vec\xi,\vec\tau)|\lesssim N_2^{-1/32}\langle\tau+\xi^2\rangle^{-1/2}M_4(\xi,\tau,\vec\xi,\vec\tau)
\end{align*}
which implies the desired estimate as in case $IIb_{1,+}$.\\[10pt]
\emph{Case $IIIc$: $N_2\gg N_3\gg N_1$. }We may assume $N_2\sim N$ and 
\begin{align*}
\max\{|\tau+\xi^2|,|\tau_1+\xi_1^2|,|\tau_2+\xi_2^2|,|\tau_3-\xi_3^2|\}\gtrsim N_2N_3.
\end{align*}
Therefore
\begin{align*}
|\tilde M(\xi,\tau,\vec\xi,\vec\tau)|
\lesssim \langle\tau+\xi^2\rangle^{-1/2}M_0(\xi,\tau,\vec\xi,\vec\tau)+\sum_{j=1}^3M_j(\xi,\tau,\vec\xi,\vec\tau)
\lesssim \sum_{j=0}^3\tilde M_j(\xi,\tau,\vec\xi,\vec\tau),
\end{align*}
where the last step can be seen by using $\langle\tau+\xi^2\rangle\gtrsim N_2N_3\sim NN_3$ for $(\vec\xi,\vec\tau)\in A_0(\xi,\tau)$ and a calculation as in \eqref{Party}. We obtain the conclusion by arguments similar to case $I$ using $N_2^{-1/8}\lesssim N_2^{-1/16}N_3^{-1/16}$.
\end{proof}
\begin{korl}[Trilinear estimate for \boldmath{$s\ge\tfrac12$}\unboldmath{}]\label{kor}
Let $s\ge\frac12$, $\delta>0$, $T\in(0,1]$, $u_j\in\mathcal S(\mathbb X\times\mathbb R)$ such that $\mathrm{supp}\,u_j\subseteq\mathbb X\times[-T,T]$, $j\in\{1,2,3\}$. Then, for some $\varepsilon>0$,
\begin{align*}
\|\mathcal T(u_1,u_2,\overline{u_3})\|_{\mathfrak X^{s,-\frac12}\cap\mathcal Y^{s,-1}}\lesssim&\, T^\varepsilon\sum_{k=1}^3\|u_k\|_{\mathfrak X^{s,\frac12}}\prod_{\underset{j\ne k}{j=1}}^3\|u_j\|_{\mathfrak X^{\frac12,\frac12}}.
\end{align*}
\end{korl}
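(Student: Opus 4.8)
The plan is to deduce everything from the cases already settled. The inequality for $s=\tfrac12$ is precisely theorems \ref{L6} and \ref{Tri} applied to the triple $(u_1,u_2,\overline{u_3})$, once one records that complex conjugation exchanges the $\Gamma_+$- and $\Gamma_-$-weights and commutes with the projections $P_N$, so that $\|\overline{u_3}\|_{\mathfrak X^{1/2,1/2,-}}=\|u_3\|_{\mathfrak X^{1/2,1/2}}$. For general $s\ge\tfrac12$ I would first unfold the norms $\mathfrak X^{s,-1/2}$ and $\mathcal Y^{s,-1}$ into their $P_1$-parts and the suprema over dyadic $N>1$. On the support of $P_1\mathcal T(u_1,u_2,\overline{u_3})$ one has $\langle\xi\rangle^s\sim_s 1$, so the $P_1$-part equals, up to constants, the corresponding $X^{1/2,-1/2}$- and $Y^{1/2,-1}$-pieces, which are already controlled by the $s=\tfrac12$ case (and bounded by the $k=1$ term of the asserted right-hand side since $\|u_1\|_{\mathfrak X^{1/2,1/2}}\le\|u_1\|_{\mathfrak X^{s,1/2}}$). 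Hence everything reduces to estimating $\sup_{N>1}\|P_N\mathcal T(u_1,u_2,\overline{u_3})\|_{X^{s,-1/2}}$ and $\sup_{N>1}\|P_N\mathcal T(u_1,u_2,\overline{u_3})\|_{Y^{s,-1}}$.

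For these I would exploit the elementary bound $\langle\xi\rangle^{s-1/2}\lesssim_s\langle\xi_1\rangle^{s-1/2}+\langle\xi_2\rangle^{s-1/2}+\langle\xi_3\rangle^{s-1/2}$, valid on $\{\xi_1+\xi_2+\xi_3=\xi\}$ precisely because $s-\tfrac12\ge0$. Writing $\langle\xi\rangle^s=\langle\xi\rangle^{s-1/2}\langle\xi\rangle^{1/2}$ in the convolution representation of $\widehat{\mathcal T}$ and distributing the factor $\langle\xi\rangle^{s-1/2}$ over the three inputs by this inequality replaces, in the $k$-th term, the weight $\langle\xi_k\rangle^{1/2}$ attached to the $k$-th factor by $\langle\xi_k\rangle^s$; equivalently it substitutes $J^{s-1/2}_x u_k$ for the $k$-th argument (and $J^{s-1/2}_x\overline{u_3}=\overline{J^{s-1/2}_x u_3}$ for $k=3$), which is again a Schwartz function with time support in $[-T,T]$ since $J^{s-1/2}_x$ acts only in $x$. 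After the elementary reduction to non-negative Fourier transforms used at the start of the proofs of theorems \ref{L6} and \ref{Tri}, the quantity so obtained is literally the one estimated there, so each of the three terms is bounded by $T^\varepsilon$ times the right-hand side of theorem \ref{L6} (resp. theorem \ref{Tri}) with $u_k$ replaced by $J^{s-1/2}_x u_k$. I would then use the identities $\|J^{s-1/2}_x w\|_{\mathfrak X^{1/2,b}}=\|w\|_{\mathfrak X^{s,b}}$ and $\|J^{s-1/2}_x w\|_{\mathcal Y^{1/2,b}}=\|w\|_{\mathcal Y^{s,b}}$ (which hold with equality because $J^{s-1/2}_x$ is a Fourier multiplier in $\xi$ commuting with the $P_N$, and $\langle\xi\rangle^{s-1/2}\sim N^{s-1/2}$ on the range of $P_N$, $N>1$), together with the conjugation identity again in the form $\|\overline{J^{s-1/2}_x u_3}\|_{\mathfrak X^{1/2,1/2,-}}=\|u_3\|_{\mathfrak X^{s,1/2}}$; taking the supremum over $N>1$ of the resulting products of norms and summing the two norm contributions yields exactly $T^\varepsilon\sum_{k=1}^3\|u_k\|_{\mathfrak X^{s,1/2}}\prod_{j\ne k}\|u_j\|_{\mathfrak X^{1/2,1/2}}$.

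The one point that needs care is making precise the assertion that "the quantity obtained after distributing $\langle\xi\rangle^{s-1/2}$ is the one estimated in the proofs of theorems \ref{L6} and \ref{Tri}". One must check that attaching the surplus weight $\langle\xi_k\rangle^{s-1/2}$ to the $k$-th input does not spoil any of the numerous case estimates there (cases $I$, $IIa$, $IIb_{1,\pm}$, $IIb_2$, $IIIa_1$, $IIIa_2$, $IIIb$, $IIIc$, as well as the periodic diagonal contribution on $\Omega''_\xi$). This is automatic: in every case the output frequency satisfies $\langle\xi\rangle\sim N\lesssim\max\{N_1,N_2,N_3\}$, so the surplus weight can always be routed onto a factor whose dyadic frequency is $\gtrsim N$, and for that factor the case analysis of the two preceding theorems already provides either a summable negative power of its frequency or a sum over only boundedly many dyadic scales (those comparable to $N$); hence no new divergence in the dyadic sums is introduced and the $T^\varepsilon$-gain is preserved. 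Still, confirming this requires walking through those case distinctions once more, which I regard as the main (bookkeeping) obstacle.
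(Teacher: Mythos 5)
Your proposal is correct and takes essentially the same approach as the paper: the key step in both is the pointwise distribution $\langle\xi\rangle^{s-1/2}\lesssim\sum_{k=1}^3\langle\xi_k\rangle^{s-1/2}$ (valid since $s\ge\tfrac12$), which after the reduction to non-negative Fourier transforms lets one absorb the surplus weight into the $k$-th input as $J_x^{s-1/2}u_k$, apply theorems \ref{L6} and \ref{Tri} directly, and then use $\|J_x^{s-1/2}u_k\|_{\mathfrak X^{1/2,b,\pm}}=\|u_k\|_{\mathfrak X^{s,b,\pm}}$ together with $\|\overline w\|_{\mathfrak X^{1/2,1/2,-}}=\|w\|_{\mathfrak X^{1/2,1/2}}$. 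The caution you raise in your final paragraph is unnecessary: theorems \ref{L6} and \ref{Tri} are stated for arbitrary Schwartz inputs with time support in $[-T,T]$, and $J_x^{s-1/2}u_k$ is again such a function, so they apply as black boxes and no re-examination of the internal case analysis is required.
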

\begin{proof}
As argumented before, we focus on frequencies $|\xi|$ of order $N>1$. Since $\langle\xi\rangle^{s-\frac12}
\lesssim \sum_{k=1}^3\langle\xi_k\rangle^{s-\frac12}$, we have
\begin{align*}
\|P_N\mathcal T(u_1,u_2,\overline{u_3})\|_{X^{s,-\frac12}}
\lesssim \Big\|P_N\mathcal T\big(&J_x^{s-\frac12}u_1,u_2,\overline{u_3}\big)\Big\|_{X^{\frac12,-\frac12}}\\
&\,+ \left\|P_N\mathcal T\big(u_1,J_x^{s-\frac12}u_2,\overline{u_3}\big)\right\|_{X^{\frac12,-\frac12}}\\
&\,+ \left\|P_N{\mathcal T\big(u_1,u_2,J_x^{s-\frac12}\partial_x\overline{u_3}\big)}\right\|_{X^{\frac12,-\frac12}}.
\end{align*}
Estimate $\eqref{TX}$ and $\|\overline u\|_{X^{s,b,-}}=\|u\|_{X^{s,b}}$ imply
\begin{align*}
\|\mathcal T(u_1,u_2,\overline{u_3})\|_{\mathfrak X^{s,-\frac12}}\lesssim T^\varepsilon \sum_{k=1}^3\|u_k\|_{\mathfrak X^{s,\frac12}}\prod_{\underset{j\ne k}{j=1}}^3\|u_j\|_{\mathfrak X^{\frac12,\frac12}}.
\end{align*}
Replacing \eqref{TX} by \eqref{TY} provides the same upper bound for $\|\mathcal T(u_1,u_2,\overline{u_3})\|_{\mathcal Y^{s,-1}}$.
\end{proof}
\section{Multilinear Estimate}
In this section, we consider the polynomial terms $\mathcal Q(v)$ and $|v|^{2k}v$, $k\in\mathbb N_0$. The absence of derivatives in these terms leads to a less technical proof.
\begin{satzk}
Let $s\ge\frac12$, $\delta>0$, $k\in\mathbb N_0$, $T\in(0,1]$, $u_j\in\mathcal S(\mathbb X\times\mathbb R)$ satisfying $\mathrm{supp}\,u_j\subseteq\mathbb X\times[-T,T]$, $j\in\mathbb N_{\le k+1}$. There is an $\varepsilon>0$ such that
\begin{align}
\Big\|\prod_{j=1}^{k+1}u_j\Big\|_{\mathfrak X^{s,-\frac38-\delta}\cap\mathcal Y^{s,-1}}\lesssim&\, T^{\varepsilon}\sum_{l=1}^{k+1}\|u_l\|_{\mathfrak X^{s,\frac12,\pm}}\prod_{\underset{j\ne l}{j=1}}^{k+1}\|u_j\|_{\mathfrak X^{\frac12,\frac12,\pm}}\label{multi}
\end{align}
and in particular
\begin{align}
\big\|\mathcal Q(u_1,\overline {u_2},u_3,\overline{u_4},u_5)\big\|_{\mathfrak X^{s,-\frac38-\delta}\cap\mathcal Y^{s,-1}}\lesssim T^{\varepsilon}\sum_{l=1}^5\|u_l\|_{\mathfrak X^{s,\frac12}}\prod_{\underset{j\ne l}{j=1}}^5\|u_j\|_{\mathfrak X^{\frac12,\frac12}}\label{27}.
\end{align}
\end{satzk}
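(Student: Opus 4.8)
The plan is to collapse \eqref{multi} into a single product‑Strichartz estimate and then sum over frequencies. First, enlarging $\delta$ only weakens the norm $\mathfrak X^{s,-\frac38-\delta}$, so we may assume $\delta<\tfrac18$; then $-\tfrac38-\delta>-\tfrac12$ and \eqref{XY} gives $\|w\|_{\mathcal Y^{s,-1}}\lesssim\|w\|_{\mathfrak X^{s,-\frac38-\delta}}$, so it suffices to prove the $\mathfrak X^{s,-\frac38-\delta}$‑estimate. Put $n:=k+1$ and assume $k\ge1$; the case $k=0$ is immediate, since $\|u_1\|_{X^{s,-\frac38-\delta}}\lesssim T^{\varepsilon}\|u_1\|_{X^{s,\frac12}}$ follows from $\|u_1\|_{X^{s,-\frac38-\delta}}\le\|u_1\|_{L^2_tH^s_x}$, Hölder in time, and \eqref{11}. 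As in the proofs above we may assume $\widehat{P_{N_j}u_j}\ge0$ and $P_{N_j}u_j=\chi_T(t)P_{N_j}u_j$, and we estimate the output piece $P_N$ for $N>1$, the piece $P_1$ being handled in the same (easier) way; the signs $\pm$ are irrelevant, as \eqref{L41}, \eqref{L43}, \eqref{11} hold for $\Gamma_+$ and $\Gamma_-$ alike.

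Fix $N>1$. Decomposing $u_j=\sum_{N_j}P_{N_j}u_j$, using $\langle\xi\rangle\sim N$ on $\mathrm{supp}\,\chi_N$, the triangle inequality, and \eqref{L43} (valid for the exponent $-\tfrac38-\delta<-\tfrac38$),
\begin{align*}
\Big\|P_N\prod_{j}u_j\Big\|_{X^{s,-\frac38-\delta}}\lesssim N^{s}\sum_{\substack{N_1,\dots,N_n\\ \max_jN_j\gtrsim N}}\Big\|\prod_{j}P_{N_j}u_j\Big\|_{L^{4/3}_{t,x}}.
\end{align*}
Group the tuples $(N_1,\dots,N_n)$ by which index $l$ attains $\max_jN_j$; on each group $N^{s}\lesssim N_l^{s}$, and since the right‑hand side of \eqref{multi} contains the sum over $l$, it is enough to bound each group by $\|u_l\|_{\mathfrak X^{s,\frac12}}\prod_{j\ne l}\|u_j\|_{\mathfrak X^{\frac12,\frac12}}$. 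As all factors are time‑supported in $[-T,T]$, Hölder in $t$ and $x$ gives, for small $\varepsilon>0$,
\begin{align*}
\Big\|\prod_{j}P_{N_j}u_j\Big\|_{L^{4/3}_{t,x}}\lesssim T^{\varepsilon}\,\|P_{N_l}u_l\|_{L^{4}_tL^{2}_x}\prod_{j\ne l}\|P_{N_j}u_j\|_{L^{p}_tL^{q}_x},
\end{align*}
with $q=4k$ and $p$ slightly larger than $2k$, so that $4,2,p,q\in[2,\infty)$. Then \eqref{11} with $L^{2}_x$ yields $N_l^{s}\|P_{N_l}u_l\|_{L^{4}_tL^{2}_x}\lesssim N_l^{s}\|P_{N_l}u_l\|_{X^{0,\frac12}}\sim\|P_{N_l}u_l\|_{X^{s,\frac12}}$ with \emph{no} loss of a power of $N_l$, while for $j\ne l$, \eqref{11} gives $\|P_{N_j}u_j\|_{L^{p}_tL^{q}_x}\lesssim\|P_{N_j}u_j\|_{X^{\frac12-\frac1q,\frac12}}\sim N_j^{-1/q}\|P_{N_j}u_j\|_{X^{\frac12,\frac12}}$, a genuine gain $N_j^{-1/q}$.

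It remains to sum, and the only delicate point is the sum over $N_l$, which carries no gain. If $N_l\sim N$, there are only $O(1)$ admissible $N_l$, and the remaining sums $\sum_{N_j}N_j^{-1/q}\|P_{N_j}u_j\|_{X^{\frac12,\frac12}}\lesssim\|u_j\|_{\mathfrak X^{\frac12,\frac12}}$ are geometric. If $N_l\gg N$, then from $\xi=\sum_j\xi_j$ with $|\xi|\sim N$ and $|\xi_l|\sim N_l$ one gets $|\sum_{j\ne l}\xi_j|\gtrsim N_l$, so the second largest among $N_1,\dots,N_n$ is also $\sim N_l$, and its gain $N_j^{-1/q}\sim N_l^{-1/q}$ makes the $N_l$‑sum geometric as well, the remaining gains handling the rest. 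In all cases the group is $\lesssim T^{\varepsilon}\|u_l\|_{\mathfrak X^{s,\frac12}}\prod_{j\ne l}\|u_j\|_{\mathfrak X^{\frac12,\frac12}}$ uniformly in $N$; taking the supremum over $N$ (and the analogous bound for $P_1$) proves \eqref{multi}. Finally \eqref{27} is the case $k=4$: in the non‑periodic setting $\mathcal Q(u_1,\overline{u_2},u_3,\overline{u_4},u_5)$ is, by the preceding Lemma, the plain product, so applying \eqref{multi} with sign $-$ on the conjugated factors and using $\|\overline{u_j}\|_{X^{\sigma,b,-}}=\|u_j\|_{X^{\sigma,b,+}}$ gives exactly \eqref{27}; in the periodic setting $\widehat{\mathcal Q}$ is the same convolution integral over a proper subset of the frequency hyperplane, so after replacing each $\widehat{u_j}$ by $|\widehat{u_j}|$ (which preserves all norms and dominates $|\widehat{\mathcal Q}|$ pointwise by the transform of $|u_1|\cdots|u_5|$) the bound again follows from \eqref{multi}.

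The main obstacle is precisely this highest‑frequency summation: unlike for the trilinear term $\mathcal T$ there is no derivative, hence no resonance identity to invoke, so the derivative factor $u_l$ has to be placed in $L^{4}_tL^{2}_x$ — any $L^{q}_x$ with $q>2$ would cost a power of the output frequency and destroy the supremum over $N$ — which leaves its frequency sum without a gain; the remedy is the elementary fact that one dominant input frequency forces a second comparable one whose Strichartz gain can be borrowed. A secondary nuisance is arranging the Hölder exponents so that every use of \eqref{11} stays in the admissible range $[2,\infty)$, uniformly in $k$.
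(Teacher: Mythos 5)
Your proof is correct and takes essentially the same route as the paper: distribute the $\langle\xi\rangle^s$-weight onto the factor with the dominant frequency, pass via the dual $L^{4/3}$-Strichartz estimate \eqref{L43} and Hölder to product Strichartz norms, and handle the lack of a geometric gain on the dominant frequency $N_l$ by observing that either $N_l\sim N$ (so there are $O(1)$ choices) or a second input frequency must be comparable to $N_l$, whose $N_{j^\ast}^{-1/q}$-gain can be transferred to the $N_l$-sum. The paper packages this slightly differently — it splits into the sets $B_l$ (all $N_j\ll N_l$, forcing $N_l\sim N$) and $B_l^c$ (some $N_{j^\ast}\gtrsim N_l$), places the dominant factor in $L^2_{t,x}$ (i.e.\ $X^{s,0}$) and extracts $T^\varepsilon$ from the $\chi_T$-cutoff lemma, and in $B_l^c$ spends a symmetric $N_l^{-1/(8k)}N_{j^\ast}^{-1/(8k)}$ gain — whereas you place the dominant factor in $L^4_tL^2_x$ and obtain $T^\varepsilon$ from slack in the Hölder exponents in time; these are cosmetic variants of the same estimate. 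Two small points in your favour: your $k=0$ case genuinely produces the $T^\varepsilon$-factor (the paper's stated embedding argument does not, and implicitly still needs the $\chi_T$-cutoff lemma), and you explicitly note that the restricted convolution defining $\mathcal Q$ in the periodic case is dominated pointwise after replacing $\widehat{u_j}$ by $|\widehat{u_j}|$, which the paper leaves implicit.
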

\begin{proof}
By triangle inequality, we may assume $P_{N_j}u_j\ge0$ and $P_{N_j}u_j=\chi_T(t)P_{N_j}u_j$. Hence, estimate \eqref{27} is a direct consequence of \eqref{multi}. According to \eqref{XY}, we have $X^{s,-\frac38-\delta}\hookrightarrow Y^{s,-1}$ for $\delta\in(0,\tfrac18)$. Therefore, it suffices to handle the $\mathfrak X^{s,-\frac38-\delta}$-norm. \\[10pt]
For $k=0$, we can conclude estimate \eqref{multi} from $X^{s,\frac12}\hookrightarrow X^{s,-\frac38-\delta}$ for $\delta>0$ and $X^{s,-\frac38-\delta}\hookrightarrow Y^{s,-1}$ for $\delta\in(0,\tfrac18)$, compare \eqref{XY}.\\[10pt]
Now, let $k\ge1$. As before, we focus on $N>1$. Applying $\langle\xi\rangle^s\lesssim \sum_{l=1}^{k+1}\langle\xi_l\rangle^s$ leads to 
\begin{align*}
\Big\|P_N\prod_{j=1}^{k+1}u_j\Big\|_{X^{s,-\frac38-\delta}}\lesssim\sum_{l=1}^{k+1}\bigg\|P_N\Big(J_x^{s}u_l\prod_{\underset{j\ne l}{j=1}}^{k+1}u_j\Big)\bigg\|_{X^{0,-\frac38-\delta}}.
\end{align*}
Let $\varepsilon\in(0,\tfrac12)$, $l\in\mathbb N_{\le k+1}$ and $B_l:=\{(N_1,...,N_{k+1})\in\mathcal D_1^{k+1}:\ N_j\ll N_l\ \forall\,j\ne l\}$ (with an implicit constant greater than $4k$). We decompose
\begin{align*}
\bigg\|P_N\bigg(&J_x^{s}u_l\prod_{\underset{j\ne l}{j=1}}^{k+1}u_j\bigg)\bigg\|_{X^{0,-\frac38-\delta}}\\
&\le\, \sum_{(N_1,...,N_{k+1})\in B_l}\bigg\|\chi_N(\xi)\langle\tau+\xi^2\rangle^{-\frac38-\delta}\bigg(P_{N_k}J_x^{s}u_l\prod_{\underset{j\ne l}{j=1}}^{k+1}P_{N_j}u_j\bigg)\widehat{\textcolor[rgb]{1,1,1}{u}}(\xi,\tau)\bigg\|_{L^2_{\xi,\tau}}\\
&\,\ \ \ +\sum_{(N_1,...,N_{k+1})\in B_l^c}\bigg\|\chi_N(\xi)\langle\tau+\xi^2\rangle^{-\frac38-\delta}\bigg(P_{N_k}J_x^{s}u_l\prod_{\underset{j\ne l}{j=1}}^{k+1}P_{N_j}u_j\bigg)\widehat{\textcolor[rgb]{1,1,1}{u}}(\xi,\tau)\bigg\|_{L^2_{\xi,\tau}}\\
&=: I+II.
\end{align*}
\emph{Case I: } We need $N_l\sim N$ for a positive contribution. From Strichartz estimate \eqref{L43}, Hölder's inequality and estimate \eqref{11}, we conclude
\begin{align*}
I\lesssim&\, \sum_{N_l\sim N}{\sum_{N_j,\,j\ne l}}\bigg\|P_{N_l}J_x^{s}u_l\prod_{\underset{j\ne l}{j=1}}^{k+1}P_{N_j}u_j\bigg\|_{L^{4/3}_{t,x}}\\
\lesssim&\,\sum_{N_l\sim N}\big\|P_{N_l}u_l\big\|_{X^{s,0,\pm}}\underset{j\ne l}{\prod_{j=1}^{k+1}}\sum_{N_j}\big\|P_{N_j}u_j\big\|_{X^{\frac12-\frac{1}{4k},\frac12,\pm}}
\lesssim T^{\varepsilon}\|u_l\|_{\mathfrak X^{s,\frac12,\pm}}\underset{j\ne l}{\prod_{j=1}^{k+1}}\|u_j\|_{\mathfrak X^{\frac12,\frac12,\pm}}.
\end{align*}
\emph{Case II: }For $(N_1,..,N_{k+1})\in B_l^c$, there is a $j_l^\ast\in\mathbb N_{\le{k+1}}\smallsetminus\{l\}$ such that $N_l\lesssim N_{j_l^\ast}$. This means
\begin{align*}
II\lesssim&\, \sum_{N_j,j\ne l}\sum_{N_l\lesssim N_{j_l^\ast}}\Big\|P_{N_l}J_x^su_l\underset{j\ne l}{\prod_{j=1}^{k+1}}P_{N_j}u_j\Big\|_{L^{4/3}_{t,x}}\\
\lesssim&\, \sum_{N_j,j\ne l}\sum_{N_l\lesssim N_{j_l^\ast}}\big\|P_{N_{j^\ast_l}}u_{j^\ast_l}\big\|_{X^{\frac12-\frac{1}{8k},\frac12,\pm}}\big\|P_{N_l}u_l\big\|_{X^{s-\frac{1}{8k},0,\pm}}\underset{j\ne l,j_l^\ast}{\prod_{j=1}^{k+1}}\|u_j\|_{\mathfrak X^{\frac12,\frac12,\pm}}\\
\lesssim&\, T^{\varepsilon}\|u_l\|_{\mathfrak X^{s,\frac12,\pm}}\underset{j\ne l}{\prod_{j=1}^{k+1}}\|u_j\|_{\mathfrak X^{\frac12,\frac12,\pm}}
\end{align*}
using the dual Strichartz estimate \eqref{L43} in the first step, Hölder's inequality and estimate \eqref{11} in the second step.
\end{proof}
\section{Local well-posedness}
By the standard contraction mapping principle, we obtain the following local well-posedness result for the gauge equivalent problem:
\begin{satzk}\label{WT}
Let $s\ge\tfrac12$, $k\in\mathbb N_0$, $r>0$ and $B_r:=\{v_0\in B^s_{2,\infty}(\mathbb X):\ \|v_0\|_{B^{s}_{2,\infty}(\mathbb X)}<r\}$. For any $v_0\in B_r$, there is a $T=T(r)>0$ such that the Cauchy problem \eqref{TP2} has a unique solution $v\in\mathcal Z^s_T$. The flow map
\begin{align*}
F\colon B_r\to \mathcal C([-T,T],B^s_{2,\infty}(\mathbb X)),\ v_0\mapsto v
\end{align*}
is Lipschitz continuous.
\end{satzk}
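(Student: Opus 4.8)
The plan is to solve the Duhamel formulation of \eqref{TP2} by the Banach fixed point theorem in a ball of $\mathcal Z^s(\mathbb X)$ and then to restrict the fixed point to $[-T,T]$. Writing $\mathcal N(v):=-i\mathcal T(v)-\tfrac12\mathcal Q(v)+\lambda|v|^{2k}v$, and in order to make the estimates of Sections 4 and 5 applicable — these require inputs with short time support and produce a gain $T^\varepsilon$ — I introduce a time cutoff and set, for $T\le\tfrac12$,
\[
\Phi_{v_0}(v)(t):=\chi(t)U_tv_0+\chi(t)\int_0^tU_{t-t'}\mathcal N\big(\chi_{\le T}v\big)(t')\,\mathrm dt',
\]
where $\chi_{\le T}v:=\chi_{\le T}(t)v$ is supported in $\mathbb X\times[-2T,2T]\subseteq\mathbb X\times[-2,2]$ and coincides with $v$ on $[-T,T]$. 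Since $T\le1$ forces $\chi\equiv1$ on $[-T,T]$, every fixed point of $\Phi_{v_0}$, restricted to $[-T,T]$, is a solution of \eqref{TP2} lying in $\mathcal Z^s_T$. Moreover, since $\mathcal T(v)=\mathcal T(v,v,\overline v)$, $\mathcal Q(v)=\mathcal Q(v,\overline v,v,\overline v,v)$ and $|v|^{2k}v=v^{k+1}\overline v^{\,k}$ are (conjugate-)multilinear, $\mathcal N(v)-\mathcal N(w)$ is a finite sum of multilinear terms exactly one factor of which is $v-w$ or its conjugate; hence the self-mapping and the contraction bounds will be structurally identical.

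The estimates are then assembled from the results already established. The linear estimate gives $\|\chi(t)U_tv_0\|_{\mathcal Z^s}\lesssim\|v_0\|_{B^s_{2,\infty}}<r$, and the Duhamel estimate reduces the nonlinear part of $\Phi_{v_0}(v)$ to $\|\mathcal N(\chi_{\le T}v)\|_{\mathfrak X^{s,-\frac12}\cap\mathcal Y^{s,-1}}$. Using Corollary \ref{kor} for $\mathcal T(\chi_{\le T}v)$, estimate \eqref{27} for $\mathcal Q(\chi_{\le T}v)$, and estimate \eqref{multi} with $k$ replaced by $2k$ (the conjugations absorbed into the $\pm$) for $|\chi_{\le T}v|^{2k}\chi_{\le T}v$, combined with $\mathfrak X^{s,-\frac38-\delta}\hookrightarrow\mathfrak X^{s,-\frac12}$ for $\delta<\tfrac18$, the embeddings $\mathcal Z^s\hookrightarrow\mathfrak X^{s,\frac12}\hookrightarrow\mathfrak X^{\frac12,\frac12}$ (the second because $s\ge\tfrac12$), and the time-cutoff bound $\|\chi_{\le T}v\|_{\mathfrak X^{s,\frac12}}\lesssim T^{-\delta}\|v\|_{\mathcal Z^s}$ (applied after a routine density reduction to Schwartz $v$, the multilinear maps being continuous), one obtains, for a suitable $\varepsilon_1>0$,
\[
\big\|\mathcal N(\chi_{\le T}v)\big\|_{\mathfrak X^{s,-\frac12}\cap\mathcal Y^{s,-1}}\lesssim T^{\varepsilon_1}\big(\|v\|_{\mathcal Z^s}^3+\|v\|_{\mathcal Z^s}^5+\|v\|_{\mathcal Z^s}^{2k+1}\big),
\]
provided $\delta$ is chosen small compared with the exponents $\varepsilon$ furnished by Corollary \ref{kor} and the multilinear theorem. (For $k=0$ the term $\lambda|v|^{2k}v=\lambda v$ is linear, but it still gains a power of $T$ because $\chi_{\le T}v$ has compact time support, so it does no harm for small $T$.) The analogous estimate for the difference reads $\|\mathcal N(\chi_{\le T}v)-\mathcal N(\chi_{\le T}w)\|_{\mathfrak X^{s,-\frac12}\cap\mathcal Y^{s,-1}}\lesssim T^{\varepsilon_1}\big(1+(\|v\|_{\mathcal Z^s}+\|w\|_{\mathcal Z^s})^{\max\{4,2k\}}\big)\|v-w\|_{\mathcal Z^s}$.

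The argument is closed in the standard way: with $R$ a fixed multiple of $r$ dictated by the linear estimate, and $T=T(r)>0$ chosen small enough that the powers of $T$ above outweigh the fixed powers of $R$, the map $\Phi_{v_0}$ sends the closed ball $\{v\in\mathcal Z^s:\|v\|_{\mathcal Z^s}\le R\}$ into itself and is a contraction on it; the Banach fixed point theorem then produces a unique fixed point $v=v(v_0)$, whose restriction to $[-T,T]$ is the asserted solution, and uniqueness in all of $\mathcal Z^s_T$ follows by the usual continuation argument (two solutions agree on a sufficiently small subinterval, and the set of coincidence times is relatively open and closed in $[-T,T]$). For the flow map, $\Phi_{v_0}(v)-\Phi_{\tilde v_0}(v)=\chi(t)U_t(v_0-\tilde v_0)$ has $\mathcal Z^s$-norm $\lesssim\|v_0-\tilde v_0\|_{B^s_{2,\infty}}$, so the uniform-contraction principle gives $\|v(v_0)-v(\tilde v_0)\|_{\mathcal Z^s_T}\lesssim\|v_0-\tilde v_0\|_{B^s_{2,\infty}}$, and composing with the embedding $\mathcal Z^s_T\hookrightarrow\mathcal C([-T,T],B^s_{2,\infty})$ yields the Lipschitz continuity of $F$. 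I expect the only genuinely delicate point to be the balancing of the $T^{-\delta}$-losses of the time cutoff against the $T^\varepsilon$-gains of the multilinear estimates (which fixes how small $\delta$ must be) together with the degenerate case $k=0$; all the substantive analytic work sits in the trilinear and multilinear estimates of Sections 4 and 5, so once those are available the present theorem amounts to this bookkeeping plus the contraction principle.
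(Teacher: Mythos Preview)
Your proposal is correct and follows exactly the approach the paper indicates: the paper itself gives no detailed proof here, merely stating that the result follows ``by the standard contraction mapping principle'' from the linear/Duhamel lemma together with Corollary \ref{kor} and the multilinear estimates \eqref{multi}, \eqref{27}. Your write-up is a faithful and careful expansion of precisely this argument, including the routine bookkeeping (time cutoffs, balancing $T^{-\delta}$ losses against $T^\varepsilon$ gains, the difference estimates for the contraction, and Lipschitz dependence via the uniform contraction principle) that the paper leaves implicit.
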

We can conclude local well-posedness for equation \eqref{G2} (i.e. prove theorem \ref{W}) by the same strategy as in Herr \cite{Herr}: The Gauge transformation is a locally bilipschitz homeomorphism on $\mathcal C([-T,T],B^s_{2,\infty})$ which can be shown by an application of Sobolev's multiplication theorem for Besov spaces:
\begin{align}
\|f_1f_2\|_{B^s_{2,\infty}}\lesssim \|f_1\|_{B^{s_1}_{2,\infty}}\|f_2\|_{B^{s_2}_{2,\infty}}\label{smult}
\end{align}
for $f_1\in B^{s_1}_{2,\infty}$, $f_2\in B^{s_2}_{2,\infty}$, $s\ge0$, $s_1,s_2\ge s$, $s_1+s_2-s>\tfrac12$. A proof for $H^s$ instead of $B^s_{2,\infty}$ can be found for example in \cite[Corollary 1.1.12]{HerrDiss}. With trivial modifications, one can show \eqref{smult} in a similar way by localizing frequencies on $N_1,N_2$ and considering the cases $N_1\sim N_2$, $N_1\ll N_2$. We obtain the statement of theorem \ref{W} by establishing $M_{s,T}:=G^{-1}(\mathcal Z^s_T)$ in the non-periodic setting and $M_{s,T}:=\mathcal G^{-1}(\mathcal Z^s_T)$ in the periodic setting.
\subsection*{Acknowledgement} This is essentially the author's master thesis finished in September 2016. The author would like to thank his supervisor Prof. Sebastian Herr for many helpful thoughts and giving a lot of valuable suggestions.
\bibliographystyle{plain}
\bibliography{Literaturliste2}

\begin{thebibliography}{10}

\bibitem{BiLi}
Hebe Biagioni and Felipe Linares.
\newblock {\em Ill-posedness for the derivative Schrödinger and generalized
  Benjamin-Ono equations}.
\newblock Transactions of the American Mathematical Society 353/ 3649-3659,
  2001.

\bibitem{Chen}
Yun~Mei Chen.
\newblock {\em The initial-boundary value problem for a class of nonlinear
  Schrödinger equations}.
\newblock Acta Mathematica Scientia/4: 405-418, 1986.

\bibitem{GTV}
Jean Ginibre, Yoshio Tsutsumi, and Giorgio Velo.
\newblock {\em On the Cauchy problem for the Zakharov system}.
\newblock Journal of Functional Analysis, 151(2): 384-436, 1997.

\bibitem{Grn}
Axel Grünrock.
\newblock {\em On the Cauchy- and periodic boundary value problem for a certain
  class of derivative nonlinear Schrödinger equations}.
\newblock arXiv:math.AP/0006195v1, 2000.

\bibitem{GrnDiss}
Axel Grünrock.
\newblock {\em New applications of the Fourier restriction norm method to
  wellposedness problems for nonlinear evolution equations}.
\newblock Dissertation an der Bergischen Universität Wuppertal, 2002.

\bibitem{Grn2}
Axel Grünrock.
\newblock {\em Bi- and trilinear Schrödinger estimates in one space dimension
  with applications to cubic NLS and DNLS}.
\newblock International Mathematics Research Notices/41: 2525-2558, 2005.

\bibitem{GH}
Axel Grünrock and Sebastian Herr.
\newblock {\em Low regularity local well-posedness of the derivative nonlinear
  Schrödinger equation with periodic initial data}.
\newblock Siam Journal of Mathematical Analysis/39: 1890-1920, 2008.

\bibitem{SGuoDNLS}
Shaoming Guo, Xianfeng Ren, and Baoxiang Wang.
\newblock {\em Local well-posedness for the derivative nonlinear Schrödinger
  equations with $L^2$ subcritical data}.
\newblock arXiv:math.AP/1608.03136v1, 2016.

\bibitem{GuWu}
Zihua Guo and Yifei Wu.
\newblock {\em Global well-posedness for the derivative nonlinear Schrödinger
  equation in $H^{\frac12}(\mathbb R)$}.
\newblock arXiv:math.AP/1606.07566, 2016.

\bibitem{Hay}
Nakao Hayashi.
\newblock {\em The initial value problem for the derivative nonlinear
  Schrödinger equation in the energy space}.
\newblock Nonlinear Analysis: Theory, Methods and Applications/ 20(7): 823-833,
  1993.

\bibitem{HaOz2}
Nakao Hayashi and Tohru Ozawa.
\newblock {\em On the derivative nonlinear Schrödinger equation}.
\newblock Journal of Physics D/ 55(1-2): 14-36, 1992.

\bibitem{HaOz}
Nakao Hayashi and Tohru Ozawa.
\newblock {\em Finite energy solutions of nonlinear Schrödinger equations of
  derivative type}.
\newblock Siam Journal of Mathematical Analysis/ 25(6): 1488-1503, 1994.

\bibitem{Herr}
Sebastian Herr.
\newblock {\em On the Cauchy problem for the derivative nonlinear Schrödinger
  equation with periodic boundary condition}.
\newblock International Mathematics Research Notices, 2006.

\bibitem{HerrDiss}
Sebastian Herr.
\newblock {\em Well-posedness results for dispersive equations with derivative
  nonlinearities}.
\newblock Dissertation an der Universität Dortmund, 2006.

\bibitem{Mesk}
Tadas Meskauskas.
\newblock {\em On well-posedness of the initial boundary-value problem for the
  derivative nonlinear Schrödinger equation}.
\newblock Lithuanian Mathematical Journal/38(3): 250-261, 1998.

\bibitem{Mos}
Razvan Mosincat.
\newblock {\em Global well-posedness of the derivative nonlinear Schrödinger
  equation with periodic boundary condition in $H^{\frac12}$}.
\newblock arXiv:math.AP/1608.06838v1, 2016.

\bibitem{Ta2}
Hideo Takaoka.
\newblock {\em Well-posedness for the one dimensional nonlinear Schrödinger
  equation with the derivative nonlinearity}.
\newblock arXiv:math.AP/ 1508.03076, 2015.

\bibitem{Wu}
Yifei Wu.
\newblock {\em Global well-posedness on the derivative nonlinear Schrödinger
  equation}.
\newblock Analysis and PDE/ 8(5): 1101-1112, 2015.

\end{thebibliography}
\end{document}